\numberwithin{equation}{section}
\definecolor{darkgreen}{rgb}{0.0, 0.7, 0.0}
\definecolor{cyan}{cmyk}{1,0,0,0}
\newtheorem{tm}{Theorem}[section]
\newtheorem{lm}[tm]{Lemma}
\newtheorem{pr}[tm]{Proposition}
\newtheorem{rmk}[tm]{Remark}
\newtheorem{cor}[tm]{Corollary}
\newtheorem{??}[tm]{Question}
\newtheorem*{thmA}{Theorem A}
\newtheorem*{thmB}{Theorem B}
\newtheorem*{corC}{Corollary C}
\newtheorem*{thmD}{Theorem D}
\newtheorem*{thmE}{Theorem E}
\newtheorem*{thmF}{Theorem F}
\newcommand{\ben}{\begin{enumerate}}
\newcommand{\een}{\end{enumerate}}
\newcommand{\bit}{\begin{itemize}}
\newcommand{\eit}{\end{itemize}}
\newcommand{\beq}{\begin{equation}}
\newcommand{\eeq}{\end{equation}}
\newcommand{\la}{\label}
\newcommand\ci{\cite}
\font\tenmsb=msbm10
\font\sevenmsb=msbm7
\font\fivemsb=msbm5
\def\Bbb#1{{\fam\msbfam #1}}
\font\teneufm=eufm10
\font\seveneufm=eufm7
\font\fiveeufm=eufm5
\newcommand\rat{{\Bbb Q}}
\newcommand\comp{{\Bbb C}}
\newcommand\real{{\Bbb R}}
\newcommand\zed{{\Bbb Z}}
\newcommand\s{\sigma}
\begin{document}

\title{Combinatorics and  topology of proper toric maps}

\author[M.A.~de Cataldo]{Mark Andrea A.  de Cataldo}
\address{Department of Mathematics, Stony Brook University,
Stony Brook, NY 11794-3651, USA}
\email{{\tt  mark.decataldo@stonybrook.edu}}

\author[L.~Migliorini]{Luca~Migliorini}
\address{Dipartimento di Matematica, Universita di Bologna,
Piazza di Porta S. Donato 5, 40127 Bologna, Italy}
\email{{\tt luca.migliorini@unibo.it}}

\author[M. Musta\c{t}\u{a}]{Mircea~Musta\c{t}\u{a}}
\address{Department of Mathematics, University of Michigan,
Ann Arbor, MI 48109, USA}
\email{{\tt mmustata@umich.edu}}

\markboth{M.~de Cataldo, L.~Migliorini, and M.~Musta\c t\u a}{The combinatorics and topology of proper toric maps}

\thanks{The research of de Cataldo was partially supported by NSF
grant DMS-1301761 and by a grant from the Simons Foundation (\#296737 to Mark de Cataldo).
The research of Migliorini was partially supported by PRIN project 2012 ``Spazi di moduli e teoria di Lie''.
The research of Musta\c{t}\u{a} was partially supported by NSF grant DMS-1401227.}

\begin{abstract}
We study the topology of toric maps. We show that if $f\colon X\to Y$ is a proper toric morphism, with $X$ simplicial, then
the cohomology of every fiber of $f$ is pure and of Hodge-Tate type. When the map is a fibration, we give an explicit formula for the Betti numbers
of the fibers in terms of a relative version of the $f$-vector, 
extending the usual formula for the Betti numbers of a simplicial complete toric variety. We then describe the Decomposition Theorem
for a toric fibration, giving in particular a nonnegative combinatorial invariant attached to each cone in the fan of $Y$, which is positive
precisely when the corresponding closed subset of $Y$ appears  as a support in the Decomposition Theorem. The description of this invariant involves 
the stalks of the intersection cohomology complexes on $X$ and $Y$, but in the case when both $X$ and $Y$ are simplicial, there is a simple formula
in terms of the relative $f$-vector.
\end{abstract}

\maketitle


\section{Introduction}
A complex toric variety is a normal complex algebraic variety $X$ that carries an action of a torus $T=({\mathbf C}^*)^n$, such that $X$ has an open orbit isomorphic 
to $T$. Toric varieties can be described in terms of convex-geometric objects, namely fans and polytopes, and part of their appeal comes from the fact
that algebro-geometric properties of the varieties translate into combinatorial properties of the fans or polytopes, see \cite{Ful93}. For example, 
given a complete simplicial toric variety $X$ (recall that \emph{simplicial} translates as having quotient singularities), the information given by the Betti numbers of $X$
is equivalent to that encoded by the $f$-vector of $X$, which records the number of cones of each dimension in the fan defining $X$. This has been famously used by
Stanley in \cite{Stanley1}, together with the Poincar\'{e}  duality and Hard Lefschetz theorems to prove a conjecture of McMullen concerning the $f$-vector of a simple polytope. 
When $X$ is not-necessarily-simplicial, it turns out that the right cohomological invariant to consider is not the cohomology of $X$ (which is \emph{not} a combinatorial invariant),
but the intersection cohomology (see \cite{Stanley2} and also \cite{DL} and \cite{Fieseler}).

In this paper we are concerned with a relative version of this story. More precisely, we consider a proper (equivariant) morphism of toric varieties $f\colon X\to Y$ and study two related questions. 
We first study the cohomology of the fibers of $f$ and then apply this study  to describe the Decomposition Theorem for $f.$ Our results are most precise when $f$ is a fibration, that is,
when it is proper, surjective and with connected fibers.

We begin with the following result about simplicial toric varieties that admit a proper toric map to an affine toric variety that has a fixed point. The case of complete simplicial
toric varieties is well-known. Recall that a pure Hodge structure of weight $q$ is \emph{of Hodge-Tate type} if all Hodge numbers $h^{i,j}$ are $0$,
unless $i=j$ (in particular, the underlying vector space is $0$ if $q$ is odd).

\begin{thmA}
If $X$ is a simplicial toric variety that admits a proper toric map to an affine toric variety that has a fixed point, then the following hold for every $q$:
\begin{enumerate}
\item[i)] The canonical map $A_q(X)_{\rat}\to H_{2q}^{BM}(X)_{\rat}$ is an isomorphism, where $A_q(X)_{\rat}$ is the Chow group of $q$-dimensional cycle classes and $H^{BM}_{2q}(X)_{\rat}$ is the $(2q)^{\rm th}$ Borel-Moore homology
group of $X$ (both with $\rat$-coefficients).
\item[ii)] The mixed Hodge structures on each of $H^q_c(X,\rat)$ and $H^q(X,\rat)$ are pure, of weight $q$, and of Hodge-Tate type.
\end{enumerate}
\end{thmA}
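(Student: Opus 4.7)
The plan is to use an appropriately chosen one-parameter subgroup of the torus to produce a Bialynicki-Birula cell decomposition of $X$ and reduce to the classical case of a complete simplicial toric variety.

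Because $Y$ is an affine toric variety with a fixed point $y_0$, its fan consists of the faces of a top-dimensional strongly convex cone $\sigma$; I would choose a cocharacter $\lambda\colon\comp^\ast\to T$ corresponding to a lattice point in the interior of $\sigma$, so that $\lim_{t\to 0}\lambda(t)\cdot y = y_0$ for every $y\in Y$. Since $f$ is proper and $T$-equivariant, a standard valuative argument shows that $\lim_{t\to 0}\lambda(t)\cdot x$ also exists in $X$ for every $x\in X$. Thus the $\comp^\ast$-action contracts $X$ to its fixed locus, whose connected components $F_1,\ldots,F_r$ are closed toric subvarieties of $X$ contained in the proper fiber $f^{-1}(y_0)$; each $F_i$ is therefore a complete simplicial toric variety, for which the two assertions of the theorem are classical.

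Next, I would set up the plus-decomposition $X=\bigsqcup_{i=1}^{r} X_i^+$, where $X_i^+$ is the attracting locus of $F_i$ under $\lambda$. Working \'etale-locally on the simplicial $X$, or by lifting to a smooth equivariant toric resolution and descending, each $X_i^+$ is rationally an affine bundle of some rank $r_i$ over $F_i$, so $H^{BM}_\ast(X_i^+,\rat)$ is pure of Hodge-Tate type and concentrated in even degrees, and the cycle class map $A_\ast(X_i^+)_\rat \to H^{BM}_{2\ast}(X_i^+)_\rat$ is an isomorphism. Ordering the cells so that every partial union $Z_k = X_{i_1}^+ \cup \cdots \cup X_{i_k}^+$ is closed in $X$, I would prove the theorem for $X$ by induction on $k$: the excision sequences for Chow groups and for Borel--Moore homology, combined with the vanishing of $H^{BM}$ in odd degrees inherited from the induction hypothesis, give a five-lemma argument establishing both the purity with Hodge-Tate property for $H^{BM}_\ast(Z_k,\rat)$ and the isomorphism in (i) for $Z_k$. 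Part (ii) then follows: $H^\ast_c(X,\rat)$ is Hodge-dual to $H^{BM}_\ast(X,\rat)$, and $H^\ast(X,\rat)$ is obtained from $H^{BM}_\ast(X,\rat)$ via the Poincar\'e duality available on the rationally smooth $X$.

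The main obstacle I anticipate is making the Bialynicki-Birula step rigorous on a simplicial rather than smooth toric variety: one must verify that, with $\rat$-coefficients, each cell $X_i^+$ carries the Hodge-theoretic and Chow-theoretic structure of an affine bundle over $F_i$. In the smooth case this is classical; in the simplicial case one can descend from a smooth equivariant toric resolution or analyze the toric charts of $X$ directly, both of which demand careful bookkeeping but no essentially new ideas.
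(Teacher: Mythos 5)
Your overall strategy---a Bialynicki--Birula decomposition coming from a one-parameter subgroup that contracts $X$ toward the fiber over the fixed point of $Y$, followed by excision and induction on the cells---is the same one the paper uses, but there are two genuine gaps that must be addressed, and one place where a sharper choice would simplify the argument considerably.

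\textbf{Filterability is not automatic.} You write ``ordering the cells so that every partial union $Z_k=X_{i_1}^+\cup\cdots\cup X_{i_k}^+$ is closed in $X$'' as if this could always be arranged. It cannot: for a general toric variety the BB decomposition need not be filterable, and the paper explicitly cites Jurkiewicz's example of a complete toric variety where no such ordering of the attracting sets exists. The paper's construction avoids this by assuming $X$ quasi-projective, fixing a torus-invariant ample Cartier divisor $D$, and using the associated polyhedron $P=P_D$ to produce the ordering: it orders the maximal cones $\sigma_i$ by the integers $\langle u(\sigma_i),v\rangle$ and proves directly (Proposition~\ref{prop_filtration}) that the resulting partial unions $U_i$ are open. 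Your argument never invokes quasi-projectivity or ampleness, so the filtration you need is simply asserted rather than constructed. This is the central missing ingredient.

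\textbf{The general simplicial case is not reduced to the quasi-projective one.} The hypothesis in Theorem~A allows $X$ to be simplicial but not quasi-projective, in which case the BB/filtration argument does not apply directly. The paper deals with this by combining the toric Chow lemma and toric resolution of singularities to produce a projective birational $g\colon\widetilde X\to X$ with $\widetilde X$ smooth and quasi-projective, and then showing $g^*$ is injective on cohomology (via Poincar\'e duality and the projection formula, or via $\rat_X$ being a direct summand of $Rg_*\rat_{\widetilde X}$); purity and Hodge--Tate type then descend from $\widetilde X$ to $X$. Your sketch has no such reduction step, so as written it only proves the theorem for quasi-projective $X$.

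\textbf{A simpler choice of cocharacter.} You take $\lambda$ generic only relative to the cone $\sigma$ of $Y$, so the $\lambda$-fixed components $F_i$ of $X$ can be positive-dimensional complete toric varieties, and you then have to show each $X_i^+$ is rationally an affine bundle over $F_i$ on a singular (simplicial) $X$, which you yourself flag as the ``main obstacle.'' The paper instead chooses $v\in|\Delta_X|\cap N_X$ generic relative to $\Delta_X$ itself (conditions C1 and C2, which involve the support of $|\Delta_X|^\vee$ and the vertices $u(\sigma_i)$ of the polyhedron of an ample divisor). With this finer choice the fixed locus consists of the isolated torus-fixed points $x_1,\dots,x_r$, and each cell $X_i$ is shown in Corollary~\ref{cor_filtration} to be a closed subset of $U_{\sigma_i}$ that is an affine space (or a finite quotient of one in the simplicial case). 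This makes the cohomological bookkeeping (Lemma~\ref{lem_surj_coh}) elementary and avoids the affine-bundle-over-singular-base analysis entirely. I would recommend adopting the paper's genericity conditions for $v$.

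Once the filterability and quasi-projectivity reduction are supplied, and the cocharacter is chosen as in the paper, your excision-and-five-lemma endgame is correct and matches the paper's.
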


As a consequence, we deduce the following:

\begin{thmB}
Let $f\colon X\to Y$ be a proper toric map between complex toric varieties, with  $X$  simplicial. For every $y\in Y$ and every $q$,
the mixed
Hodge structure on $H^q(f^{-1}(y),{\mathbf Q})$ is pure, of weight $q$, and of Hodge-Tate type.
\end{thmB}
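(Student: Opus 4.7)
The plan is to reduce Theorem~B to Theorem~A by a slicing construction followed by a $\mathbb{C}^*$-deformation retract. The key idea is that, although $y$ need not be a torus-fixed point of $Y$, one can exhibit a closed simplicial toric subvariety $X_V\subset X$ that still maps properly and equivariantly onto an affine toric variety with a fixed point and whose fiber over that fixed point is exactly $f^{-1}(y)$.

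First I would replace $Y$ by the affine open $U_\tau$, where $\tau$ is the cone of the fan of $Y$ with $y\in O_\tau$; this leaves $f^{-1}(y)$ unchanged. (One may also assume $f$ is surjective by replacing $Y$ with $f(X)$; if $y\notin f(X)$ then $f^{-1}(y)=\emptyset$ and the claim is trivial.) The sublattice $N_\tau := N_Y\cap\mathbb{R}\tau$ is saturated, so choosing a lattice complement gives a splitting $N_Y = N_\tau\oplus N'$ and an induced isomorphism $U_\tau\cong V_\tau\times T'$, in which $V_\tau$ is the affine toric variety of $\tau\subset N_\tau$ (where $\tau$ is now full-dimensional, so $V_\tau$ has a torus-fixed point $y_0$) and $T'$ is the torus of $N'$. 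The orbit $O_\tau$ corresponds to $\{y_0\}\times T'$, and using $T_Y$-equivariance I may translate so that $y=(y_0,1)$.

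Now comes the slicing: the surjection $\phi\colon N_X\to N_Y$ of finitely generated free abelian groups splits, so I can lift the decomposition of $N_Y$ to a lattice splitting $N_X = N_X^V\oplus\widetilde{N'}$ with $\phi(N_X^V)=N_\tau$ and $\phi(\widetilde{N'})=N'$. Because every cone of the fan $\Delta_X$ is mapped by $\phi$ into $\tau\subset N_\tau\oplus 0$, each such cone lies in $N_X^V\otimes\mathbb{R}$, whence $X\cong X_V\times T_{\widetilde{N'}}$ as toric varieties, where $X_V$ is the simplicial toric variety carrying the same cones viewed in $N_X^V$. The induced morphism $g\colon X_V\to V_\tau$ is proper and toric, $V_\tau$ is affine toric with a fixed point, and $g^{-1}(y_0)=f^{-1}(y)$. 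Theorem~A applied to $g$ therefore yields that $H^q(X_V,\mathbb{Q})$ is pure of weight $q$ and of Hodge--Tate type for every $q$.

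It remains to transfer this purity from $X_V$ to $f^{-1}(y)$. Pick a one-parameter subgroup $\lambda\colon\mathbb{C}^*\to T_\tau$ in the relative interior of $\tau$, so that $\lim_{t\to 0}\lambda(t)\cdot v = y_0$ for every $v\in V_\tau$. By equivariance and properness of $g$, the limit $r(x):=\lim_{t\to 0}\lambda(t)\cdot x$ exists in $X_V$ for every $x\in X_V$ and lies in $g^{-1}(y_0)=f^{-1}(y)$. The map $H(x,s)=\lambda(s)\cdot x$, extended continuously at $s=0$ along $\mathbb{R}_{>0}\subset\mathbb{C}^*$, is a homotopy from $\mathrm{id}_{X_V}$ to $r$ that preserves $f^{-1}(y)$, so the inclusion $i\colon f^{-1}(y)\hookrightarrow X_V$ is a homotopy equivalence; as $i$ is algebraic, the resulting isomorphism $i^*\colon H^q(X_V,\mathbb{Q})\xrightarrow{\sim}H^q(f^{-1}(y),\mathbb{Q})$ is a morphism, hence an isomorphism, of mixed Hodge structures, proving the theorem. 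The main obstacle is the slicing step, namely the combinatorial verification that the splitting of $N_Y$ lifts to a genuine \emph{product} decomposition $X\cong X_V\times T_{\widetilde{N'}}$; this follows from the observation that every cone of $\Delta_X$ is sent into $\tau$ by $\phi$ and therefore lies in the lifted sublattice $N_X^V$.
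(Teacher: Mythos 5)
Your overall strategy coincides with the paper's: restrict to $U_\tau$, split off a torus factor to reduce to the case where $Y$ is affine with a torus-fixed point $y=x_\tau$, apply Theorem~A, and use a $\mathbb{C}^*$-contraction to identify $H^q(X_V,\rat)$ with $H^q(f^{-1}(y),\rat)$. The paper packages the last step sheaf-theoretically (Lemma~\ref{retraction_lemma} and Remark~\ref{rmk_retraction_lemma}) rather than as a topological deformation retract, but these are the same mechanism. The product splitting you describe is exactly Lemma~\ref{lm_prod_str}.

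There is, however, a genuine gap in the reduction. You reduce to the case where $f$ is surjective and then assert that ``the surjection $\phi\colon N_X\to N_Y$ \ldots splits,'' but surjectivity of $f$ does \emph{not} imply surjectivity of the lattice map $f_N$. By Proposition~\ref{Stein_fact}, $f$ surjective is equivalent to $\mathrm{Coker}(f_N)$ being finite, whereas $f_N$ surjective is equivalent to $f$ being a \emph{fibration}. (Think of $\mathbb{C}^*\to\mathbb{C}^*$, $z\mapsto z^2$: surjective, yet $f_N$ has cokernel $\zed/2\zed$.) Without surjectivity of $f_N$ the splitting $N_X=N_X^V\oplus\widetilde{N'}$, and with it the whole slicing construction, breaks down. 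The fix is exactly the one the paper uses: factor $f$ as $X\xrightarrow{g}Z\xrightarrow{h}Y$ with $g$ a fibration and $h$ finite (Proposition~\ref{Stein_fact} combined with Remark~\ref{rmk_nonsurj}); since $h$ is finite, every fiber of $f$ is a finite disjoint union of fibers of $g$, and purity for $g$-fibers transfers to $f$-fibers because cohomology of a disjoint union is a direct sum of mixed Hodge structures. After this step $f_N$ really is a surjection of free abelian groups, and your splitting argument becomes valid.

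A smaller imprecision: the one-parameter subgroup you use for the contraction is chosen in $T_\tau$, but the contraction $(t,x)\mapsto\lambda(t)\cdot x$ on $X_V$ requires a one-parameter subgroup of $T_{X_V}$. You should first lift the lattice point $v\in\mathrm{Int}(\tau)\cap N_\tau$ to $\widetilde v\in N_{X_V}$ with $g_N(\widetilde v)=v$ (possible since $g_N$ is surjective once $g$ is a fibration), and then use $\gamma_{\widetilde v}$. With this lift, properness of $g$ guarantees the limit $\lim_{t\to0}\gamma_{\widetilde v}(t)\cdot x$ exists in $X_V$ and lands in $g^{-1}(y_0)$, and the rest of your homotopy argument (including the fact that the homotopy preserves $g^{-1}(y_0)$, so $i$ is a genuine homotopy equivalence, and that $i^*$ is a bijective morphism of mixed Hodge structures, hence an isomorphism of such) is correct.
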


By virtue of Proposition \ref{str_fibers} below,  under the assumptions of  Theorem~B, every irreducible component of $f^{-1}(y)$ is a complete, simplicial toric variety, for which the properties
in the statement are well-known. However, the fact that the union still satisfies these properties is not a general fact. In order to prove Theorem~B,
we first  reduce to the case when $X$ is smooth, $f$ is a projective fibration, $Y$ is affine, and has a torus-fixed point  which is equal to $y$. In this case,
there is an isomorphism of mixed Hodge structures
$$H^q(f^{-1}(y),\rat)\simeq H^q(X,\rat)$$ and therefore Theorem~A implies Theorem~B. In order to prove Theorem~A, we prove Corollary \ref{cor_filtration},
which yields  a filtration of $X$  by open subsets $\emptyset=U_0\subset U_1\subset\ldots\subset U_r=X$
such that each difference $U_i\smallsetminus U_{i-1}$ is isomorphic to a quotient of an affine space by a finite group. The existence of such a filtration is well-known when $X$
is smooth and projective (see \cite[Chapter~5.2]{Ful93}) and a similar augment gives it in the context we need. It is then easy to deduce the assertions
about the cohomology of $X$ from the existence of such a filtration.  We remark  that the purity statements
in both theorems can also be deduced from the above isomorphism of mixed Hodge
structures via some general properties of the weight filtration (see Remark~\ref{weights_trick}). However, we have  preferred to give the argument described above 
in order to emphasize the elementary nature of the results.

For every toric fibration $f\colon X\to Y$, it is easy to compute the Hodge-Deligne polynomial of the fibers of $f$. This is done in terms of a relative version
of the familiar notion of $f$-vector of a toric variety. Recall that the orbits of the torus action on a toric variety $Y$ are in bijection with the cones in the fan
$\Delta_Y$ defining $Y$, with the orbit $O(\tau)$ corresponding to $\tau\in\Delta_Y$ having dimension equal to ${\rm codim}(\tau)$. Moreover, the irreducible
torus-invariant closed subsets of $Y$ are precisely the orbit closures $V(\tau)=\overline{O(\tau)}$. With this notation, for every cone $\tau\in\Delta_Y$,
we denote by 
$d_{\ell}(X/\tau)$ the number of irreducible torus-invariant closed subsets $V(\sigma)$ of $X$ such that $f(V(\sigma))=V(\tau)$ and 
$\dim(V(\sigma))-\dim(V(\tau))=\ell$. The following corollary (Corollary \ref{Betti_fib}) generalizes the well-known formula
for the Betti numbers of a complete, simplicial toric variety. Note that  Theorem~B
implies that in what follows, the odd cohomology groups are trivial.

\begin{corC}
If $f\colon X\to Y$ is a toric fibration, with $X$ simplicial, and $y\in O(\tau)$ for some cone $\tau\in \Delta_Y$, then
for every $m\in{\mathbf Z}_{\geq 0}$, we have 
$$\dim_{\mathbf Q}H^{2m}(f^{-1}(y),{\mathbf Q})=\sum_{\ell\geq m}(-1)^{\ell-m}{{\ell}\choose m}d_{\ell}(X/\tau).$$
Moreover, the Euler-Poincar\'{e} characteristic of $f^{-1}(y)$ is given by $\chi(f^{-1}(y))=d_0(X/\tau)$.
\end{corC}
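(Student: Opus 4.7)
My plan is to stratify the fiber $f^{-1}(y)$ by its intersections with the $T$-orbits of $X$, use additivity of the Hodge--Deligne polynomial to compute $E(f^{-1}(y);u,v)$, and then invoke the purity and Hodge--Tate conclusions of Theorem~B to read off the individual Betti numbers. Since $f$ is equivariant, it sends every orbit $O(\sigma)\subseteq X$ to a single orbit of $Y$, so $O(\sigma)\cap f^{-1}(y)$ is nonempty precisely when $f(O(\sigma))=O(\tau)$, equivalently when $f(V(\sigma))=V(\tau)$. For each such $\sigma$, the restriction $O(\sigma)\to O(\tau)$ is a surjective homomorphism of algebraic tori (after a choice of basepoints). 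The assumption that $f$ is a fibration forces the lattice map $N_X\to N_Y$ to be surjective, which ensures the kernel of $O(\sigma)\to O(\tau)$ is a \emph{connected} subtorus; hence $O(\sigma)\cap f^{-1}(y)\simeq({\mathbf C}^*)^{\ell}$ with $\ell=\dim V(\sigma)-\dim V(\tau)$.

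Next, adding the Hodge--Deligne polynomials of the strata and using $E(({\mathbf C}^*)^{\ell};u,v)=(uv-1)^{\ell}$ together with the definition of $d_\ell(X/\tau)$, one obtains
\[
E(f^{-1}(y);u,v)\;=\;\sum_{\ell\geq 0}d_\ell(X/\tau)\,(uv-1)^{\ell}.
\]
By Theorem~B, each $H^q(f^{-1}(y),{\mathbf Q})$ is pure of weight $q$ and of Hodge--Tate type, so the odd cohomology vanishes and $H^{2m}$ is concentrated in bidegree $(m,m)$. Since $f$ is proper, the fiber is compact and compactly supported cohomology agrees with ordinary cohomology, so the Hodge--Deligne polynomial reduces to $\sum_{m}b_{2m}(f^{-1}(y))\,(uv)^m$ with $b_{2m}=\dim_{\mathbf Q}H^{2m}(f^{-1}(y),{\mathbf Q})$. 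Equating the two expressions and extracting the coefficient of $(uv)^m$ via the binomial theorem yields the claimed formula.

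For the Euler--Poincar\'e characteristic, the vanishing of odd cohomology gives $\chi(f^{-1}(y))=E(f^{-1}(y);1,1)$; setting $uv=1$ in the displayed identity kills every term with $\ell\geq 1$, leaving $\chi(f^{-1}(y))=d_0(X/\tau)$.

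I expect the only nontrivial step to be the first one: identifying each stratum $O(\sigma)\cap f^{-1}(y)$ as a single torus of the expected dimension, rather than a disjoint union of tori. This is exactly where the fibration hypothesis (surjectivity with connected fibers) is used, since it guarantees surjectivity of $N_X\to N_Y$ and hence connectedness of the kernel tori. Once this is in place, everything else is a routine application of additivity of $E$-polynomials together with Theorem~B.
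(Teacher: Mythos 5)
Your proposal is correct and follows essentially the same route as the paper: stratify $f^{-1}(y)$ by its intersections with torus orbits of $X$ (the content of Proposition~\ref{str_fibers}(ii)), compute the Hodge--Deligne polynomial additively (Proposition~\ref{HD_fiber}), and then use purity from Theorem~B to identify the coefficients as Betti numbers. The paper establishes the orbit-stratum identification via the product decomposition of Lemma~\ref{lm_prod_str} rather than your direct connectedness argument on the kernel of $O(\sigma)\to O(\tau)$, but both rely on the same surjectivity of $f_N$, so this is only a cosmetic difference.
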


Our  initial goal  was to understand the decomposition theorem of \cite{BBD} for
proper  toric morphisms. In the setting of finite fields, this is carried out in \ci{deC2}. In our setting, i.e. over the complex numbers,
the theorem takes the following form
(for a variety $X$, we denote by $IC_X$ the intersection complex on $X;$
for nonsingular $X,$ this is 
${\mathbf Q}_X [\dim{X}]$).

\begin{thmD}
If $X$ and $Y$ are complex toric varieties and $f\colon X\to Y$ is a toric fibration, then we have a decomposition
\begin{equation}\label{eq_toric_dec_thm_intro}
Rf_*(IC_X)\simeq \bigoplus_{\tau\in\Delta_Y}\bigoplus_{b\in\zed}IC_{V(\tau)}^{\oplus s_{\tau,b}}[-b],
\end{equation} 
where the nonnegative integers $s_{\tau,b}$ satisfy $s_{\tau,b}=0$ if $b+\dim(X)-\dim(V(\tau))$ is odd.
\end{thmD}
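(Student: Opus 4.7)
The plan is to apply the BBD decomposition theorem to $f$, then use torus-equivariance together with Theorem~B to constrain the simple summands. Since $f$ is proper and $IC_X$ is a semisimple perverse sheaf of geometric origin, BBD yields
$$Rf_*IC_X\simeq \bigoplus_\alpha IC_{S_\alpha}(L_\alpha)[-b_\alpha],$$
with $S_\alpha\subseteq Y$ irreducible closed and $L_\alpha$ simple local systems on dense opens of $S_\alpha$. The complex $Rf_*IC_X$ carries a natural $T$-equivariant structure, and since $T$ is connected its action on the finite set of isomorphism classes of simple summands is trivial, so each summand is itself $T$-equivariant. This forces each $S_\alpha$ to be $T$-invariant --- hence $S_\alpha=V(\tau)$ for some $\tau\in\Delta_Y$ --- and $L_\alpha$ to be a $T$-equivariant simple local system on the open orbit $O(\tau)\simeq T/T_\tau$. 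Since $T_\tau$ is a subtorus of $T$ and hence connected, every such $T$-equivariant local system is trivial, so $L_\alpha\simeq \mathbf Q$ and each summand takes the form $IC_{V(\tau)}[-b]$.

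For the parity $s_{\tau,b}=0$ whenever $b+\dim X-\dim V(\tau)$ is odd, I first assume $X$ is simplicial, so that $IC_X\simeq \mathbf Q_X[\dim X]$. Fix $\tau$ and pick a torus-fixed point $y$ lying in the deepest orbit of $V(\tau)$, namely the fixed point corresponding to any maximal cone $\sigma\in\Delta_Y$ having $\tau$ as a face. By proper base change,
$$\mathcal{H}^i(Rf_*IC_X)_y\simeq H^{i+\dim X}(f^{-1}(y),\mathbf Q),$$
which by Theorem~B vanishes unless $i+\dim X$ is even. On the other hand, normality of the toric variety $V(\tau)$ gives $\mathcal{H}^{-\dim V(\tau)}(IC_{V(\tau)})_y\simeq \mathbf Q$, so if $s_{\tau,b}>0$ then the summand $IC_{V(\tau)}^{\oplus s_{\tau,b}}[-b]$ contributes a nonzero $\mathbf Q^{s_{\tau,b}}$ to $\mathcal{H}^{b-\dim V(\tau)}(Rf_*IC_X)_y$; the parity constraint on the left-hand side then forces $b+\dim X-\dim V(\tau)$ to be even.

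For general $X$, I reduce to the simplicial case by choosing a toric resolution $g\colon\tilde X\to X$ with $\tilde X$ smooth and $\dim\tilde X=\dim X$. BBD applied to the proper birational morphism $g$ exhibits $IC_X$ as a direct summand of $Rg_*IC_{\tilde X}$, so $Rf_*IC_X$ is a direct summand of $R(fg)_*IC_{\tilde X}$; the parity established above for $fg$ therefore descends to $f$. The main subtlety I anticipate lies in the equivariance step: the BBD decomposition is canonical only up to isomorphism, so extracting $T$-equivariance of each simple summand requires care, relying on the connectedness of $T$ together with the canonicity of the simple summands and their multiplicities in the isotypic decomposition of $Rf_*IC_X$. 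Once that is secured, the rest of the proof reduces to the clean stalk computation above, driven by Theorem~B and the normality of toric orbit closures.
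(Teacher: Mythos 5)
Your parity argument follows essentially the paper's route: Proposition 5.4 in the paper proves $H^i(f^{-1}(y),IC_X)=0$ when $i+\dim X$ is odd by passing to a toric resolution and invoking Theorem B, and then $s_{\tau,b}=0$ falls out of a stalk computation. Your reorganization (simplicial case first, then descent from a resolution) is equivalent. Two small fixes: take $y=x_\tau$ rather than ``a torus-fixed point lying in the deepest orbit of $V(\tau)$'' --- the fan $\Delta_Y$ may contain no full-dimensional cone over $\tau$, so no such fixed point need exist, whereas $x_\tau$ always does; and the isomorphism $\mathcal{H}^{-\dim V(\tau)}(IC_{V(\tau)})_y\simeq\rat$ at any $y\in V(\tau)$ is a consequence of irreducibility, not normality.

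The genuine gap is in the equivariance step, and it is not closed by the ``canonicity of the simple summands'' remark. Weak $T$-equivariance of $Rf_*IC_X$ together with connectedness of $T$ gives that $t^*$ fixes the isomorphism class of each isotypic summand, i.e.\ $t^*\bigl(IC_{S_\alpha}(L_\alpha)\bigr)\simeq IC_{S_\alpha}(L_\alpha)$ for every $t$. That does force $S_\alpha$ to be $T$-invariant, hence $S_\alpha=V(\tau)$. But for the local system this statement is vacuous: translation by $t$ is homotopic to the identity on $O(\tau)$, so $t^*L\simeq L$ holds for \emph{every} local system $L$ on $O(\tau)$, trivial or not. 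Thus ``$T$ acts trivially on the finite set of iso classes'' never rules out nontrivial monodromy. To conclude $L_\alpha\simeq\rat$ you need an isomorphism ${\rm act}^*L_\alpha\simeq{\rm pr}_2^*L_\alpha$ over $T\times O(\tau)$, and that does not follow formally from pointwise invariance of the iso class; this is exactly where a coherent family of isomorphisms (or a cocycle) must be produced. The paper sidesteps all of this with Lemma 5.5: using the explicit product decomposition $f^{-1}(U_\tau)\simeq f^{-1}(U_{\tau'})\times O(\tau)$ from Lemma 2.7, it shows that each $\mathcal{H}^i(Rf_*IC_X)$ restricts to a genuine \emph{constant} sheaf on each orbit, whence any direct summand local system $L_\alpha$ has trivial monodromy and, being indecomposable, is $\rat$. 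I would adopt that argument; alternatively, if you want the equivariant route, you must argue explicitly that ${\rm act}^*(Rf_*IC_X)\simeq{\rm pr}_2^*(Rf_*IC_X)$ over $T\times O(\tau)$ plus canonicity of the isotypic decomposition on $T\times O(\tau)$ forces ${\rm act}^*L_\alpha\simeq{\rm pr}_2^*L_\beta$ for some $\beta$, and then check on $\pi_1(T)\times\{0\}$ that this kills the monodromy of $L_\alpha$.
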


In fact, the integers $s_{\tau,b}$ in the above theorem satisfy further constraints coming from Poincar\'{e} duality and the relative Hard Lefschetz theorem;
see Theorem~\ref{toric_dec_thm} below for the precise statement.
By comparison with the general statement of the decomposition theorem from \cite{BBD}, there are two points. Firstly, the subvarieties that appear in the
decomposition (\ref{eq_toric_dec_thm_intro}) are torus-invariant and, secondly, the intersection complexes that appear  have constant coefficients
(this is due to the fact that the map is a fibration). 

We are interested in the \emph{supports} of toric fibrations, that is, the subvarieties $V(\tau)$ that appear in the decomposition
(\ref{eq_toric_dec_thm_intro}). More precisely, in the setting of Theorem~D, for every $\tau\in\Delta_Y$ we define the key invariant of this paper
$\delta_{\tau}:=\sum_{b\in\zed}s_{\tau,b}$. Clearly, in view of (\ref{eq_toric_dec_thm_intro}), we have that the closure of an orbit $V(\tau)$ is a support if and only if $\delta_{\tau}>0$. Theorems E and F  below relate the topological  invariant $\delta$
of the toric fibration $f$
to the associated combinatorial data.

Let us start by discussing the simpler case of Theorem~E, where both varieties are simplicial.
By building on Corollary~C, we obtain the following description.

\begin{thmE}
If $f\colon X\to Y$ is a toric fibration, where $X$ and $Y$ are simplicial toric varieties, then for every cone $\tau\in\Delta_Y$, we have
\begin{equation}\label{eq_thmD}
\delta_{\tau}=\sum_{\sigma\subseteq\tau}(-1)^{\dim(\tau)-\dim(\sigma)}d_0(X/\sigma).
\end{equation}
\end{thmE}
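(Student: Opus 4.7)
The plan is to extract $\delta_\tau$ from the decomposition \eqref{eq_toric_dec_thm_intro} by stalk Euler characteristics at a point $y\in O(\tau)$, and then invert the resulting linear system via Möbius inversion on the (Boolean) face lattice of the simplicial cone $\tau$.

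First, since both $X$ and $Y$ are simplicial, every orbit closure $V(\sigma)\subseteq Y$ is a simplicial toric variety, hence a rational homology manifold. Therefore $IC_X={\mathbf Q}_X[\dim X]$ and $IC_{V(\sigma)}={\mathbf Q}_{V(\sigma)}[\dim V(\sigma)]$, so each $IC_{V(\sigma)}$ has a single non-zero stalk cohomology sheaf, concentrated in degree $-\dim V(\sigma)$ and equal to ${\mathbf Q}$ on $V(\sigma)$. Fix $y\in O(\tau)$. By the orbit-cone correspondence, $y\in V(\sigma)$ if and only if $\sigma$ is a face of $\tau$. Taking stalks at $y$ in \eqref{eq_toric_dec_thm_intro}, and using proper base change to identify $\mathcal{H}^k(Rf_*IC_X)_y = H^{k+\dim X}(f^{-1}(y),{\mathbf Q})$, I get
\begin{equation*}
\dim_{\mathbf Q} H^{k+\dim X}(f^{-1}(y),{\mathbf Q}) \;=\; \sum_{\sigma\subseteq\tau} s_{\sigma,\,k+\dim V(\sigma)}.
\end{equation*}

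Next, I sum this identity over $k$. By Theorem~B the cohomology of $f^{-1}(y)$ is concentrated in even degrees, and by the parity constraint in Theorem~D, $s_{\sigma,b}=0$ unless $b+\dim X-\dim V(\sigma)$ is even. Hence the sum over $k$ yields the Euler characteristic on the left-hand side and $\sum_{\sigma\subseteq\tau}\delta_\sigma$ on the right. Combined with the second formula of Corollary~C, which evaluates $\chi(f^{-1}(y))=d_0(X/\tau)$, this gives the key intermediate identity
\begin{equation*}
d_0(X/\tau) \;=\; \sum_{\sigma\subseteq\tau}\delta_\sigma.
\end{equation*}

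Finally, because $Y$ is simplicial, the cone $\tau$ is simplicial, so its lattice of faces is Boolean (isomorphic to the subset lattice of its set of rays), and the Möbius function is $\mu(\sigma,\tau)=(-1)^{\dim(\tau)-\dim(\sigma)}$. Möbius inversion of the displayed identity immediately yields \eqref{eq_thmD}.

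The main subtlety, rather than a serious obstacle, is simply checking that $\delta_\sigma$ contributes to the stalk at $y\in O(\tau)$ exactly when $\sigma$ is a face of $\tau$, together with confirming that the parity constraint on $s_{\sigma,b}$ lets one pass cleanly from the pointwise Betti count to a sum of $\delta_\sigma$'s. Once this is in place, the argument reduces to Möbius inversion on a Boolean lattice.
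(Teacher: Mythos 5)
Your argument is correct and follows essentially the same route as the paper's proof: take stalks of the decomposition at a point of $O(\tau)$, use evenness of the fiber cohomology (Theorem~B/\ref{pure_coh}) to convert the total-dimension identity into the Euler-characteristic identity $\chi(f^{-1}(y))=\sum_{\sigma\subseteq\tau}\delta_\sigma$, identify $\chi(f^{-1}(y))$ with $d_0(X/\tau)$, and invert on the Boolean face poset of the simplicial cone $\tau$. The only superfluous remark is the appeal to the parity constraint on $s_{\sigma,b}$, which is not actually needed to pass from the sum over $k$ to $\sum\delta_\sigma$.
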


In fact, in this simplical case, we obtain an explicit formula (see Theorem~\ref{form_both_simplicial}) for each of the numbers $s_{\tau,b}$. An interesting consequence of Theorem~E is that the expression on the right-hand side of (\ref{eq_thmD}) is 
nonnegative.
It would be desirable to find a direct combinatorial argument for this fact. When $f$ is birational and 
$\dim(\tau)\leq 3$, we give a combinatorial description of $\delta_{\tau}$ which implies that it is nonnegative (see Remark~\ref{rmk_special_case}).
However,  we do not know of  such a formula when $\tau$ has higher dimension.

It is worth noting that the explicit formula for the invariants $s_{\tau,b}$ in the simplicial case, in combination with Poincar\'{e} duality and Hard Lefschetz, 
leads to interesting constraints on the combinatorics of the morphism (see Remark~\ref{rmk_rel_f_vector} for the precise statement). These constraints extend to the relative setting
the well-known conditions on the $f$-vector of a simplicial, projective toric variety.

In order to treat the case of fibrations between not-necessarily-simplicial toric varieties, we need to introduce some notation. Given a toric variety $Y$ and two cones
$\tau\subseteq\sigma$ in the fan $\Delta_Y$ defining $Y$, we put $r_{\tau,\sigma}:=\dim_{\rat}{\mathcal H}^*(IC_{V(\tau)})_{x_{\sigma}}$, where $x_{\sigma}$ can be taken to be 
any point in the orbit $O(\sigma)\subseteq V(\tau)$. It is a consequence of the results in \cite{Fieseler}  and \cite{DL}, independently,  that $r_{\tau,\sigma}$ is a combinatorial invariant.
In turn, we have the invariant $\widetilde{r}_{\tau,\sigma}$ for cones $\tau\subseteq\sigma$ in $\Delta_Y$, uniquely determined by the property that for every $\tau\subseteq\sigma$, the sum
$\sum_{\tau\subseteq\gamma\subseteq\sigma}r_{\tau,\gamma}\cdot \widetilde{r}_{\gamma,\sigma}$ is equal to $1$ if $\tau=\sigma$ and it is equal to $0$, otherwise.
By \cite{Stanley4},  $\widetilde{r}_{\tau,\sigma}$ coincides, up to sign, with the $r_{\tau, \sigma}$-function of the dual poset. 
Suppose now that $f\colon X\to Y$ is a toric fibration. For a cone $\sigma\in\Delta_Y$, we put $p_{f,\sigma}:=\dim_{\rat}{\mathcal H}^*(f^{-1}(x_{\sigma}),IC_X)$, where again
we may take $x_{\sigma}$ to be any point in the orbit $O(\sigma)$. The next result gives a description of $\delta_{\sigma}$ in terms of the above invariants. The second part
implies that the invariants $p_{f,\sigma}$, hence also the $\delta_{\sigma}$, are combinatorial.

\begin{thmF}
With the above notation, if $f\colon X\to Y$ is a toric fibration, then the following hold:
\begin{enumerate}
\item[i)] For every cone $\tau\in\Delta_Y$, we have $\delta_{\tau}=\sum_{\sigma\subseteq\tau}\widetilde{r}_{\sigma,\tau}\cdot p_{f,\sigma}$.
\item[ii)] For every cone $\sigma\in\Delta_Y$, we have $p_{f,\sigma}=\sum_ir_{0,\sigma_i}$, where the $\sigma_i$ are the cones in the fan $\Delta_X$ defining $X$
with the property that $f(V(\sigma_i))=V(\sigma)$ and $\dim(V(\sigma_i))=\dim(V(\sigma))$.
\end{enumerate}
\end{thmF}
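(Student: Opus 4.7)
The plan is to deduce (i) from the decomposition theorem by M\"obius inversion, and to prove (ii) by equivariant localization for a generic one-parameter subgroup acting on the fiber $f^{-1}(x_\sigma)$.

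\textbf{Part (i).} Taking the stalk at $x_\sigma$ on both sides of the decomposition (\ref{eq_toric_dec_thm_intro}) and summing dimensions over all cohomological degrees, proper base change yields $p_{f,\sigma}$ on the left, while on the right $\mathcal{H}^*(IC_{V(\tau)})_{x_\sigma}$ vanishes unless $\tau\subseteq\sigma$ and otherwise has total dimension $r_{\tau,\sigma}$. Hence
\[
p_{f,\sigma}=\sum_{\tau\subseteq\sigma}r_{\tau,\sigma}\cdot\delta_\tau.
\]
The matrix $(r_{\tau,\sigma})$ is upper triangular in the poset of cones of $\Delta_Y$ ordered by inclusion, and by construction $(\widetilde{r}_{\sigma,\tau})$ is its inverse; inverting yields the formula of~(i).

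\textbf{Part (ii).} First a combinatorial identification: the condition $\dim(\sigma_i)=\dim(\sigma)+(\dim X-\dim Y)$ together with $\pi(\sigma_i)\subseteq\sigma$ forces, by a direct dimension count, $\operatorname{span}(\sigma_i)=\pi^{-1}(\operatorname{span}(\sigma))$; hence $\pi(\sigma_i)$ spans $\sigma$ and $\ker(\pi)\subseteq\operatorname{span}(\sigma_i)$, and $f(V(\sigma_i))=V(\sigma)$ is automatic. Choose a one-parameter subgroup $\nu\colon\mathbb{C}^*\to T_X$ with $\pi(\nu)$ generic in $\operatorname{span}(\sigma)\cap N_Y$; since $\pi(\nu)$ lies in the stabilizer of $x_\sigma$, the $\nu$-action preserves the proper variety $f^{-1}(x_\sigma)$. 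An orbit $O(\sigma')$ meeting $f^{-1}(x_\sigma)$ is $\nu$-fixed iff $\nu\in\operatorname{span}(\sigma')$, which for generic such $\nu$ is equivalent to $\pi^{-1}(\operatorname{span}(\sigma))\subseteq\operatorname{span}(\sigma')$; combined with $\pi(\sigma')$ meeting $\operatorname{int}(\sigma)$, this forces $\sigma'$ to be one of the $\sigma_i$. Since $f$ is a fibration (so $\pi$ is surjective on lattices), each induced map $O(\sigma_i)\to O(\sigma)$ is an isomorphism of tori, and so $O(\sigma_i)\cap f^{-1}(x_\sigma)$ is a single $\nu$-fixed point $p_i$.

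Apply equivariant (hyperbolic) localization---for instance via Braden's theorem, or directly via the toric intersection cohomology computations of Fieseler and Denef--Loeser---to the $\nu$-action on $f^{-1}(x_\sigma)$ and the $T_X$-equivariant pure complex $IC_X|_{f^{-1}(x_\sigma)}$:
\[
\dim\mathbb{H}^*(f^{-1}(x_\sigma),IC_X)=\sum_i\dim\mathcal{H}^*(IC_X)_{p_i}=\sum_i r_{0,\sigma_i},
\]
where the last equality uses that, by $T_X$-equivariance, the stalk at $p_i\in O(\sigma_i)$ agrees with that at $x_{\sigma_i}$, of total dimension $r_{0,\sigma_i}$ by definition.

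The main difficulty is justifying the localization identity for the restricted complex $IC_X|_{f^{-1}(x_\sigma)}$: one needs equivariant formality so that weights do not cancel in passing from equivariant to ordinary cohomology. This follows from the purity of $Rf_*IC_X$ (a consequence of BBD) together with $T_X$-equivariance, and is ultimately encoded in the Fieseler/Denef--Loeser combinatorial stalk formulas for intersection cohomology of toric varieties.
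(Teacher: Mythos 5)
Your Part (i) is correct and is essentially the paper's argument: take the stalk of the decomposition at $x_{\sigma}$, obtain $p_{f,\sigma}=\sum_{\tau\subseteq\sigma}r_{\tau,\sigma}\delta_{\tau}$, and invert in the incidence algebra. The paper does this at the refined level of Laurent polynomials in $T$ (keeping track of cohomological degree) and then specializes at $T=1$; your integer version is the evaluation of that argument.

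Part (ii), however, has a genuine gap. The identity you invoke,
\[
\dim \mathbb{H}^*\bigl(f^{-1}(x_\sigma),IC_X\bigr)\;=\;\sum_i\dim \mathcal{H}^*(IC_X)_{p_i},
\]
is the whole content of the statement, and it is not a consequence of Braden's hyperbolic localization in the form you cite. What hyperbolic localization (plus a Bia\l ynicki-Birula filtration, which itself requires projectivity of the fiber and is not automatic for merely complete toric varieties, cf.\ the Jurkiewicz example the paper references) would give you is a degenerating spectral sequence whose $E_1$ terms are compactly-supported hypercohomologies of the attracting cells $C_i$, that is, hyperbolic \emph{costalks} of $IC_X$ at the $p_i$ -- not ordinary stalks. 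You would then still have to argue that those costalks have the same total dimension as the ordinary stalks, which is exactly the kind of cancellation-free, Hodge-Tate purity argument the paper carries out explicitly via the torus-orbit decomposition of $f^{-1}(x_\sigma)$, the constancy of $IC_X$ along each orbit, and the resulting computation of the Hodge--Deligne polynomial $\sum_\tau R_{0,\tau}(T)(T^2-1)^{\operatorname{codim}\tau-\operatorname{codim}\sigma}$, specialized at $T=1$. Your closing sentence (``This follows from ... and is ultimately encoded in the Fieseler/Denef--Loeser combinatorial stalk formulas'') acknowledges this but does not supply the missing argument, so the key step of Part (ii) is asserted rather than proved. There is also a minor slip in the setup: to kill exactly the right orbits you need $\nu$ itself generic in $f_N^{-1}(\operatorname{span}\sigma)\cap N_X$, not merely $f_N(\nu)$ generic in $\operatorname{span}(\sigma)\cap N_Y$; a non-generic choice of lift can accidentally lie in $\operatorname{span}(\sigma')$ for cones $\sigma'$ of smaller dimension than the $\sigma_i$ and produce extra fixed loci.
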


Just as for Theorem E, an explicit formula for each of the numbers $s_{\tau,b}$ is obtained by upgrading $r_{\sigma, \tau}$ and $p_{f,\sigma}$ to Laurent polynomials with integral coefficients in order to keep track of the cohomological grading (see Theorem \ref{form_general} for the precise result).
Again, as in Theorem~E, note that while $\delta \geq 0$ by definition, the right-hand side 
of Theorem~F.i) contains the factors $\widetilde{r}$, which have ``alternating signs",
so that, in particular, we find this right-hand side to be nonnegative as well. In this paper, this is proved as a consequence of the decomposition theorem. As in the simplicial case, it would be desirable to find a combinatorial description for $\delta$ that implies its non-negativity. 

We mention that in their recent preprint \cite{KS}, Katz and Stapledon undertake a related study in a more combinatorial framework,
focused on invariants associated to certain maps between posets. In the case of a toric fibration $f\colon X\to Y$, their
results apply to the map $f_*\colon\Delta_X\to\Delta_Y$. In particular, our invariants $s_{\tau,b}$ appear in their setting as the coefficients
of a \emph{local h-polynomial}. For some comparisons between their results and ours, see Remarks~\ref{rmk_KS1}, \ref{rmk_KS2}, and \ref{rmk_KS3}.


\bigskip

The paper is organized as follows. In Section 2 we review the basics of toric geometry that we use. 
We pay special attention to some facts concerning toric morphisms that seem somewhat less known, such as the description
of toric fibrations and Stein factorizations in the toric setting, and the description of the irreducible components of
fibers of toric maps. In Section 3 
we study the cohomology of toric varieties that admit proper toric maps to affine toric varieties that have a fixed point. In particular,
we prove Theorem~A (see Theorems~\ref{BM} and 
\ref{pure_coh}).
In Section 4, we apply the results in the previous section to obtain Theorem~B (see Theorem~\ref{pure_coh}). We use this and the computation
of Hodge-Deligne polynomials for fibers of toric fibrations to give the formula for the Betti numbers of such fibers in Corollary~C
(see Corollary~\ref{Betti_fib}).
In Section 5 we deduce Theorem~D  
from the decomposition theorem in \cite{BBD} (see Theorem~\ref{toric_dec_thm}). Section 6 is devoted to the description of the invariants $\delta_{\sigma}$ in the case
of a toric fibration between simplicial toric varieties (see Theorem~\ref{form_both_simplicial}), while in Section~7 we treat the general case 
(see Theorems~\ref{form_general} and \ref{thm_p_sigma}).

\subsection*{Acknowledgments}
The first-named author is grateful to the Max Planck Institute of Mathematics in Bonn
for the perfect working conditions.
We are grateful to Tom Braden, William Fulton and Vivek Shende for helpful discussions. 
Laurentiu Maxim informed us that related results are contained in \cite{CMS}, especially Thm. 3.2. 
During the preparation of this paper we learned that E. Katz and A. Stapledon are investigating, although
from a rather different viewpoint,  similar questions. We thank them for informing us about their results 
and sending us a draft of their paper \cite{KS}. Last but not least, we are grateful to the anonymous referees
for their comments and suggestions.

\section{Basic toric algebraic geometry}\la{sec_basic}

In this section we review some basic facts about toric varieties and toric maps.
For all assertions that we do not prove in this section, as well as for all the standard notation for toric varieties that we employ,
we refer to
\cite{Ful93}.  We work over an algebraically closed field $k$, of arbitrary characteristic,
in the hope that some of the facts that we prove might be useful somewhere else, in this more general setting.

\subsection*{Toric varieties and their orbits}
A toric variety $X$  is associated with a lattice $N$ and a fan $\Delta$ in $N_{\real}=N\otimes_{\mathbf Z}\real$.
If $M$ is the dual lattice of $N$, then the torus $T_N:={\rm Spec}\,k[M]$
embeds as an open subset in $X$ and its standard action on itself extends to an action on $X$. 
We often write 
$N_X$, $M_X$, $\Delta_X$, and $T_X$ for the objects corresponding to a fixed toric variety $X$.
For every cone $\sigma\in\Delta$, there is an affine open subset $U_{\sigma}$ of $X$, with $U_{\sigma}={\rm Spec}\,k[\sigma^{\vee}\cap M]$.
These affine open subsets cover $X$.
The support of a fan $\Delta$ is the subset $|\Delta|=\bigcup_{\sigma\in \Delta}\sigma$ of $N_{\real}$. The toric variety $X$ is complete if and only if
$|\Delta_X|=N_{\real}$.

The orbits of the $T_X$-action on $X$ are in bijection with the cones in $\Delta_X$. The orbit $O({\sigma}):={\rm Spec}\,k[M_X\cap\sigma^{\perp}]$ corresponding
to $\sigma\in\Delta_X$ is a torus of dimension equal to ${\rm codim}(\sigma)$. 
The distinguished element of $O({\sigma})$
(the identity of the group) is denoted by $x_{\sigma}$. In particular, the smallest cone $\{0\}$ corresponds to the open orbit $T_X$.

The irreducible torus-invariant closed subsets of $X$ are precisely the orbit closures $V(\sigma):=\overline{O({\sigma})}$. 
The lattice corresponding to $V(\sigma)$ is $N/N_{\sigma}$, where $N_{\sigma}$ is the intersection of $N$ with the linear span of $\sigma$.
We always view $\Delta$ as a poset, ordered by the inclusion of cones.
Note that $\sigma\subseteq\tau$ if and only if $V(\sigma)\supseteq V({\tau})$. 
The open subset $U_{\sigma}$ is the union of those $O(\tau)$ with $\tau\subseteq\sigma$. 

Each $V(\sigma)$ is a toric variety, with corresponding torus $O({\sigma})$. There is a surjective morphism of algebraic groups
$T_X\to O({\sigma})$ such that the $T_X$-action on $V(\sigma)$ induces the $O({\sigma})$-action.
This morphism corresponds to the split inclusion $M_X\cap\sigma^{\perp}\hookrightarrow M_X$.
If $X$ is smooth or simplicial, then each $V(\sigma)$ has the same property. 

We say that an affine toric variety $U_{\sigma}$ is \emph{of contractible type} if $\sigma$ is a cone that spans $N_{\real}$.
Note that in this case $x_{\sigma}\in U_{\sigma}$ is the unique fixed point for the torus action.

\subsection*{Toric maps} Let $X$ and $Y$ be toric varieties corresponding, respectively, to the lattices $N_X$ and $N_Y$ and to the fans 
$\Delta_X$ and $\Delta_Y$. A toric map is a morphism $f\colon X\to Y$ that induces a morphism of algebraic groups $g\colon T_X\to T_Y$ 
such that $f$ is $T_X$-equivariant with respect to the $T_X$-action on $Y$ induced by $g$. Such $f$ corresponds to a unique linear map 
$f_{N_{\real}}\colon (N_X)_{\real}\to (N_Y)_{\real}$ inducing
$f_N\colon N_X\to N_Y$ such that for every cone $\sigma\in\Delta_X$, there is a cone $\tau\in\Delta_Y$ with $f_{N_{\real}}(\sigma)\subseteq \tau$. 
We write  $f_M$ : $M_Y\to M_X$ for the dual of $f_N$.
Note that for $\sigma$ and $\tau$ as above, we have a $k$-algebra homomorphism $k[M_Y\cap\tau^{\vee}]\to k[M_X\cap\sigma^{\vee}]$ mapping $\chi^u$ to $\chi^{f_M(u)}$.
This induces a morphism $U_{\sigma}\to U_{\tau}$, which is the restriction of $f$ to $U_{\sigma}$.  In general, we have $f_{N_{\real}}^{-1}(|\Delta_Y|)\supseteq |\Delta_X|$ and the map $f$ is proper if and only if $f_{N_{\real}}^{-1}(|\Delta_Y|)=|\Delta_X|$. 
Note that by definition, we have a map $f_*\colon\Delta_X\to\Delta_Y$ such that $f_*(\sigma)$ is the smallest cone in $\Delta_Y$ that contains $f_N(\sigma)$. 
It is clear that $f_*$ is a map of posets. 

Suppose now that $f\colon X\to Y$ is a toric map such that the lattice map $f_N\colon N_X\to N_Y$ is surjective. In this case the morphism of tori $T_X\to T_Y$ is surjective.
More generally, if $\sigma$ is a cone in $\Delta_X$, then $f$ induces a surjective morphism of tori $O({\sigma})\to O(\tau)$, where $\tau=f_*(\sigma)$.
We denote the kernel of this morphism by $O(\sigma/\tau)$. Note that this is again a torus, of dimension ${\rm codim}(\sigma)-{\rm codim}(\tau)$. 
It is clear that $f(V({\sigma}))\subseteq V(\tau)$ (with equality if $f$ is proper). 
In fact, the induced map $V(\sigma)\to V(\tau)$ is again a toric map of toric varieties
with the property that the corresponding lattice map is surjective.

\subsection*{Toric Stein factorizations}
Recall that a proper morphism $f\colon X\to Y$ is a \emph{fibration} if $f_*({\mathcal O}_X)={\mathcal O}_Y$. This implies that $f$ is surjective and has connected fibers;
the converse holds if ${\rm char}(k)=0$. The Stein factorization of a proper map $f\colon X\to Y$ is the unique factorization as $X\overset{g}\to Z\overset{h}\to Y$
such that $g$ is a fibration and $h$ is a finite map. 
The following proposition gives the description of fibrations and Stein factorizations in the toric setting.

\begin{pr}\label{Stein_fact}
Let $f\colon X\to Y$ be a proper toric map corresponding to the lattice map $f_N\colon N_X\to N_Y$ and let
$\Delta_X$ and $\Delta_Y$ be the corresponding fans.
\begin{enumerate}
\item[i)] The map $f$ is surjective if and only if ${\rm Coker}(f_N)$ is finite.
\item[ii)] The map $f$ is a fibration if and only if $f_N$ is surjective.
\item[iii)] Suppose that $f$ is surjective. If we let $N_Z=f_N(N_X)$ and $\Delta_Z=\Delta_Y$,
then the factorization $N_X\overset{g_N}\to N_Z\overset{h_N}\to N_Y$ of $f_N$ induces the
Stein factorization of $f$.
\end{enumerate}
\end{pr}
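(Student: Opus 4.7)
The plan is to prove the three parts in the order (i), the easy direction of (ii), (iii), and finally the hard direction of (ii), since (iii) provides the scaffolding that makes the converse in (ii) clean.

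For part (i), I would first recall that a toric morphism restricts, on open tori, to the homomorphism of algebraic groups $T_X\to T_Y$ determined by $f_N$, and that the image of this homomorphism is the closed subtorus of $T_Y$ of dimension $\operatorname{rank}(f_N(N_X))$. If $\operatorname{Coker}(f_N)$ is finite, then $f_N(N_X)$ has full rank, so this subtorus equals $T_Y$; hence $f(X)\supseteq T_Y$, and since $f$ is proper, $f(X)$ is closed, forcing $f(X)=Y$. Conversely, if $f$ is surjective, then $f(X)\cap T_Y=T_Y$; but since $f$ is $T_X$-equivariant, $f(X)\cap T_Y$ is a union of $T_X$-orbits in $T_Y$, which are cosets of the subtorus $f(T_X)$. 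A finite-type variety contains only finitely many such cosets (actually the relevant cosets form the image, but properness cuts them down), so $T_Y/f(T_X)$ is finite; for algebraic tori this means $f(T_X)=T_Y$ and hence $\operatorname{Coker}(f_N)$ is finite.

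For the ``if'' direction of (ii), I would compute $H^0(f^{-1}(U_\tau),\mathcal O_X)$ directly for each $\tau\in\Delta_Y$. Since $f$ is proper, $f^{-1}(U_\tau)$ is the toric open subvariety whose fan consists of those $\sigma\in\Delta_X$ with $f_{N_{\real}}(\sigma)\subseteq\tau$, and its support is exactly $f_{N_\real}^{-1}(\tau)$. Hence its ring of global sections is spanned by those $\chi^u$, $u\in M_X$, with $\langle u,v\rangle\geq 0$ for every $v\in f_{N_{\real}}^{-1}(\tau)$. When $f_N$ is surjective, $f_{N_\real}^{-1}(\tau)$ contains the linear subspace $\ker(f_{N_\real})$, so such $u$ must vanish on this kernel; combined with the fact that $M_X/f_M(M_Y)$ is torsion-free (dual to $f_N$ surjective), this forces $u=f_M(u')$ for some $u'\in M_Y$, and then the inequality becomes $u'\in\tau^\vee\cap M_Y$. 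Thus the natural map $\mathcal O_Y(U_\tau)=k[\tau^\vee\cap M_Y]\to H^0(f^{-1}(U_\tau),\mathcal O_X)$ is an isomorphism, and gluing gives $f_*\mathcal O_X=\mathcal O_Y$.

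Given this, part (iii) is essentially formal. I would set $N_Z:=f_N(N_X)$ and $\Delta_Z:=\Delta_Y$, viewed as a fan in $(N_Z)_\real=(N_Y)_\real$; this is legitimate since, by (i), $N_Z$ has finite index in $N_Y$, so $(N_Z)_\real=(N_Y)_\real$. The lattice map $h_N\colon N_Z\hookrightarrow N_Y$ with the identical fans corresponds to a finite toric map $h\colon Z\to Y$ of degree $[N_Y:N_Z]$, and the surjection $g_N\colon N_X\twoheadrightarrow N_Z$ gives a toric map $g\colon X\to Z$ which is a fibration by the case of (ii) just proved. Since $f_N=h_N\circ g_N$ and toric maps are determined by their lattice maps, $f=h\circ g$, and this is a factorization into a fibration followed by a finite map, hence \emph{the} Stein factorization.

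Finally, for the ``only if'' direction of (ii): if $f$ is a fibration, then $f$ is surjective, so the factorization $X\xrightarrow{g}Z\xrightarrow{h}Y$ of (iii) is available. The inclusions $\mathcal O_Y\hookrightarrow h_*\mathcal O_Z\hookrightarrow h_*g_*\mathcal O_X=f_*\mathcal O_X=\mathcal O_Y$ compose to the identity, so $h_*\mathcal O_Z=\mathcal O_Y$; but $h_*\mathcal O_Z$ is a locally free $\mathcal O_Y$-module of rank $[N_Y:N_Z]$, forcing $N_Z=N_Y$ and hence $f_N$ surjective. The main obstacle I foresee is the section calculation in the ``if'' direction of (ii), specifically the integrality step showing that any $u\in M_X$ vanishing on $\ker(f_{N_\real})$ actually lies in $f_M(M_Y)$; this is where the torsion-freeness of $M_X/f_M(M_Y)$ (dual to $f_N$ being surjective, not merely of finite-index image) is essential, and must be invoked carefully to distinguish genuine fibrations from finite covers.
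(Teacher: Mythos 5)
Your proposal is correct and follows essentially the same route as the paper: part i) via dominance of the induced torus map, the ``if'' direction of ii) via the explicit computation of $\Gamma(f^{-1}(U_\tau),\mathcal{O}_X)$ using properness (so that the relevant characters are those nonnegative on $f_{N_\real}^{-1}(\tau)$, hence vanishing on $\ker(f_{N_\real})$ and descending to $\tau^\vee\cap M_Y$ by surjectivity of $f_N$), part iii) formally, and the ``only if'' direction of ii) from the uniqueness of the Stein factorization. The only cosmetic difference is that you make the last step explicit by computing the rank of $h_*\mathcal{O}_Z$, where the paper simply invokes uniqueness.
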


\begin{proof}
Since $f$ is proper, it is surjective if and only if the induced morphism of tori $T_X\to T_Y$ is dominant. This is the case if and
only if $f_M\colon M_Y\to M_X$ is injective, which is equivalent to ${\rm Coker}(f_N)$ being finite. This proves i).

Suppose now that $f_N$ is surjective. In order to show that $f$ is a fibration, it is enough to prove that
for every $\sigma\in\Delta_Y$, the natural map
$$\Gamma(U_{\sigma}, {\mathcal O}_Y)=k[\sigma^{\vee}\cap M_Y]\to \Gamma(f^{-1}(U_{\sigma}), {\mathcal O}_X)$$
is an isomorphism. 
Note that $f^{-1}(U_{\sigma})$ is the union of $U_{\tau}$, where $\tau$ varies over the set $\Lambda_{\sigma}$ of those cones in $\Delta_X$ such that
$f_N(\tau)\subseteq\sigma$. Therefore
$$\Gamma(f^{-1}(U_{\sigma}),{\mathcal O}_X)=\bigcap_{\tau\in\Lambda_{\sigma}}k[\tau^{\vee}\cap M_X]=\bigoplus_{u} k\cdot\chi^u,$$
where the direct sum is over those $u\in M_X$ such that $u\in\tau^{\vee}$ for every $\tau\in \Lambda_{\sigma}$. 
It is enough to show that for every such $u\in M_X$, there is $w\in M_Y\cap \sigma^{\vee}$ such that $f_M(w)=u$
(note that $w$ is clearly unique since $f_M$ is injective). 

It is clear that there is $w\in M_Y$ such that $f_M(w)=u$. Indeed, since $f_{N_{\real}}^{-1}(|\Delta_Y|)=|\Delta_X|$, we deduce that ${\rm Ker}(f_{N_{\real}})$ is a union of cones in $\Delta_X$, which automatically lie in 
$\Lambda_{\sigma}$. Therefore $u\in ({\rm Ker}(f_N))^{\perp}={\rm Im}(f_M)$. We need to show that $w$ lies in $\sigma^{\vee}$. Let $v\in\sigma\cap N_Y$.
Since $f_N$ is surjective, we can write $v=f_N(\widetilde{v})$ for some $\widetilde{v}\in N_X$. 
Since $\widetilde{v}\in f_{N_{\real}}^{-1}(|\Delta_Y|)=|\Delta_X|$, we may choose
$\tau\in \Delta_X$ smallest such that $\widetilde{v}\in\tau$. In this case
$f_{N_{\real}}(\tau)\subseteq\sigma$. Therefore $u\in\tau^{\vee}$, which implies 
$$0\leq \langle u,\widetilde{v}\rangle=\langle f_M(w),\widetilde{v}\rangle=\langle w,f_N(\widetilde{v})\rangle=\langle w,v\rangle$$ and we see that indeed
$w\in\sigma^{\vee}$. 

Suppose now that $f$ is surjective and consider the decomposition $f=h\circ g$ in iii). It is straightforward to see that 
$h$ is finite, while we have already shown that $g$ is a fibration. Therefore this decomposition is the Stein factorization of $f$.
We also deduce from the uniqueness of the Stein factorization that if $f$ is a fibration, then $f_N$ is surjective.
This completes the proof of the proposition.
\end{proof}

\begin{rmk}\label{rmk_fin_map}
A finite, surjective toric morphism $h\colon Z\to Y$ can be described as follows.  
Note that $h_N$ is injective, with finite cokernel.
Suppose first that
${\rm char}(k)$ does not divide the order of $|{\rm Coker}(h_N)|$, which is equal to the order of $A:=M_Z/M_Y$.
In this case, $h$
 is the quotient by a faithful action of $G={\rm Spec}\,k[A]$. In particular, $h$ is generically a Galois cover.
  
Indeed, note first that the assumption on the characteristic of $k$ implies that $G$ is a reduced scheme, isomorphic to the finite group 
 ${\rm Hom}(A, k^*)$. We claim that $G$ acts faithfully on $Z$ such that $Y$ is the quotient by this group action.  
 It is enough to check this on affine open subsets, hence we may assume that $Y=U_{\sigma}$. The morphism $h$ corresponds to
 $$k[\sigma^{\vee}\cap M_Y]\hookrightarrow k[\sigma^{\vee}\cap M_Z].$$
The $G$-action on $Z$ corresponds to 
$$k[\sigma^{\vee}\cap M_Z]\to k[M_Z/M_Y]\otimes_k k[\sigma^{\vee}\cap M_Z],\,\,\chi^u\to \chi^{\overline{u}}\otimes\chi^u,$$
where $\overline{u}$ is the class of $u$ in $M_Z/M_Y$. It is clear from this definition that the action is faithful and furthermore, that 
$$k[\sigma^{\vee}\cap M_Z]^G=k[\sigma^{\vee}\cap M_Y],$$
as claimed.

When ${\rm char}(k)=p>0$, given an arbitrary surjective, finite toric morphism $h\colon Z\to Y$,
we can uniquely factor $h_M\colon M_Y\hookrightarrow M_Z$ as 
$M_Y\hookrightarrow M_{\widetilde{Z}}\hookrightarrow M_Z$, such that the order of $M_Z/M_{\widetilde{Z}}$
is relatively prime to $p$
and $M_{\widetilde{Z}}/M_Y$ is a product of abelian $p$-groups. This corresponds to a factorization of $f$ as $Z\to\widetilde{Z}\overset{\alpha}\to Y$,
with $Z\to\widetilde{Z}$ a quotient as described above and $\alpha$ a universal homeomorphism. In fact, there is $\beta\colon Y\to\widetilde{Z}$ and $m\geq 1$
such that $\beta\circ\alpha={\rm Frob}_{\widetilde{Z}}^m$ and $\alpha\circ\beta={\rm Frob}_Y^m$ (note that a toric variety $W$ is defined over the prime field, hence
it is endowed with a Frobenius morphism ${\rm Frob}_W$ that is linear over the ground field). In order to see this, let us choose a basis $v_1,\ldots,v_n$ for $N_Y$ such that
$p^{m_1}v_1,\ldots,p^{m_n}v_n$ is a basis for $N_{\widetilde{Z}}$, for some positive integers $m_1,\ldots,m_n$. If $m=\max_im_i$ and $Z'$ is the toric variety corresponding to the lattice spanned by  $p^{m_1-m}e_1,\ldots,p^{m_n-m}e_n$
(the fan being the same as that of the toric varieties $Z$, $\widetilde{Z}$, and $Y$), then multiplication by $p^m$ induces an isomorphism of toric varieties $Z'\simeq\widetilde{Z}$. If $\beta\colon Y\to Z'\simeq \widetilde{Z}$
is the induced morphism, then it is easy to check that it has the desired properties.
\end{rmk}

\begin{rmk}\label{rmk_nonsurj}
Suppose that $f\colon X\to Y$ is any proper toric map, possibly not surjective. In this case $f$ has a canonical factorization $X\overset{u}\to W\overset{w}\to Y$
such that both $u$ and $w$ are proper toric maps, with $u$ surjective and $w$ finite, with $T_W\to T_Y$ a closed immersion.
Indeed, if $w\colon W\to Y$ is the normalization of $f(X)$, then since $X$ is normal, there is a unique morphism $u\colon  X\to W$ such that $f=w\circ u$. If
$$N_W:=\{v\in N_Y\mid mv\in f_N(N_X)\,\,\, \text{for some}\,\,\,m\in {\mathbf Z}_{>0}\}$$
and $T$ is the torus corresponding to $N_W$,
then the restriction of $f$ to $T_X$ factors as $T_X\overset{\phi}\to T\overset{\psi}\to T_Y$, with $\phi$ surjective and $\psi$ a closed immersion. 
Therefore $T$ is equal to $f(T_X)$. It is easy to deduce from Chevalley's constructibility theorem that $T$ is an open dense subset of $f(X)$,
hence $T$ admits an open immersion in $W$. Moreover, the map $T\times f(X)\to f(X)$ given by the $T_X$-action on $Y$ induces a 
map $T\times W\to W$ giving an action of $T$ on $W$ that extends the standard action of $T$ on itself. Since by construction $W$ is separated and normal,
it follows that $W$ is a toric variety with torus $T$. Moreover, both $u$ and $w$ are toric maps. 
\end{rmk}

\begin{rmk}\label{factorization_maps_tori}
Let us consider the above decompositions of toric maps in the case of a morphism of algebraic groups $f\colon T_1\to T_2$ between tori. 
We have a decomposition
$$T_1\overset{\phi_1}\to A\overset{\phi_2}\to B\overset{\phi_3}\to C  \overset{\phi_4}\to T_2$$
such that the following hold:
\begin{enumerate}
\item[i)] There is an isomorphism $T_1\simeq A\times A'$, with $A'$ a torus, such that $\phi_1$ corresponds 
to the projection onto the first component.
\item[ii)] $\phi_2$ is finite, surjective, and \'{e}tale, the quotient by the action of a finite group.
\item[iii)] If ${\rm char}(k)=0$, then $\phi_3$ is an isomorphism. If ${\rm char}(k)=p>0$, then there is $\beta\colon C\to B$ and $m\geq 1$
such that $\beta\circ\phi_3={\rm Frob}_{B}^m$ and $\phi_3\circ\beta={\rm Frob}_{C}^m$.
\item[iv)] $\phi_4$ is a closed immersion.
\end{enumerate}
\end{rmk}

\begin{rmk}\label{canonical_fibration}
We say that a toric variety $X$ has \emph{convex, full-dimensional fan support} if $|\Delta_X|$ is a
convex cone in $(N_X)_{\real}$ ${\rm (}$automatically rational polyhedral${\rm )}$, of maximal dimension.
Note that if $X$ 
admits a proper morphism $f\colon X\to Y$, where $Y$ is an affine toric variety of contractible type, then 
$X$ has convex, full-dimensional fan support. Conversely, if this is the case, then 
 we can find $f$ as above.
In fact, we may take $f$ to be a fibration: if  $\Lambda$ is the largest linear subspace contained in $|\Delta_X|$ and $\sigma=|\Delta_X|/\Lambda$, considered as a cone
with respect to the lattice $N_X/N_X\cap \Lambda$, then $Y=U_{\sigma}$ is of contractible type and we have a canonical toric fibration $f\colon X\to Y$. 
Finally, we note that given a proper morphism $f\colon X\to Y$, with $Y$ affine, $f$ is projective if and only if $X$ is quasi-projective.
\end{rmk}

By
Proposition~\ref{Stein_fact} and Remark~\ref{rmk_nonsurj}, we can reduce studying the fibers of an arbitrary proper toric map $f\colon X\to Y$
to the case of a fibration. Because of this, we will mostly consider this case.

\subsection*{Fibers of toric maps} We want to show that the irreducible components of the fibers of a toric map are toric varieties. 
We begin by reviewing some basic facts
that will be  used also later, when reducing to the case of affine toric varieties of contractible type.

Recall that if $X$ is a toric variety defined by the fan $\Delta$ in $N_{\real}$ and $N'$ is a finitely generated subgroup of $N$ such that
$N/N'$ is free and the linear span of $|\Delta|$ is contained in $N'_{\real}$, then we have a toric variety $X'$ corresponding to $\Delta$, considered as a fan in $N'_{\real}$. 
The inclusion $\iota\colon N'\hookrightarrow N$ induces a toric map $X'\to X$. In fact, if we choose a splitting of $\iota$, we get an isomorphism $N\simeq N'\times N/N'$
and a corresponding isomorphism $T_N\simeq T_{N'}\times T_{N/N'}$.
Moreover, we get an isomorphism $X\simeq X'\times T_{N/N'}$ compatible with the decomposition of $T_N$.

A special case that we will often use is that when $X=X(\Delta)$ is an arbitrary toric variety, $\sigma$ is a cone in $\Delta$ and $N'=N_{\sigma}$.
Applying the previous considerations to $U_{\sigma}$, we obtain 
 an isomorphism
of tori and an isomorphism of affine toric varieties
\begin{equation}\label{eq_desc_aff}
T_N\simeq T_{N_{\sigma}}\times O(\sigma),\,\,\,U_{\sigma}\simeq U_{\sigma'}\times O(\sigma),
\end{equation}
where $\sigma'$ is the same as $\sigma$, but considered in $N'_{\real}$.

We now turn to a version of such product decompositions in the relative setting.

\begin{lm}\label{lm_prod_str}
Let $f\colon X\to Y$ be a toric fibration.
Given $\tau\in\Delta_Y$, let us choose a splitting of $(N_Y)_{\tau}\hookrightarrow N_Y$ 
and then a splitting of $f_N$. These determine equivariant isomorphisms 
$$U_{\tau}\simeq U_{\tau'}\times O(\tau)\,\,\,\text{and}\,\,\,f^{-1}(U_{\tau})\simeq f^{-1}(U_{\tau'})\times O(\tau)$$
such that $f^{-1}(U_{\tau})\to U_{\tau}$ gets identified to $f_{\tau'}\times {\rm Id}$, where $f_{\tau'}\colon f^{-1}(U_{\tau'})\to U_{\tau'}$
is the restriction of $f$ over $U_{\tau'}$. Moreover, the following hold:
\begin{enumerate}
\item[i)] We have an induced isomorphism of tori $T_{U_{\tau}}\simeq T_{U_{\tau'}}\times O(\tau)$.
\item[ii)] We have an isomorphism  $f^{-1}(O(\tau))\simeq f^{-1}(x_{\tau})\times O(\tau)$ such that the restriction of $f$ to $f^{-1}(O(\tau))$
corresponds to the projection onto the second component. In particular, $f^{-1}(y)\simeq f^{-1}(x_{\tau})$ for every $y\in O(\tau)$. 
\item[iii)] The map $f_{\tau'}$ is a toric fibration over an affine toric variety of contractible type and we have an isomorphism
$$f^{-1}(x_{\tau})\simeq f_{\tau'}^{-1}(x_{\tau'}).$$ 
\end{enumerate}
\end{lm}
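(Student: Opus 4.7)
The plan is to reduce everything to the product decomposition \eqref{eq_desc_aff} applied in a compatible way both to $U_{\tau}$ and to $f^{-1}(U_{\tau})$. Since $f$ is a toric fibration, Proposition~\ref{Stein_fact}.ii) tells us $f_N\colon N_X\to N_Y$ is surjective, so a splitting $s\colon N_Y\hookrightarrow N_X$ of $f_N$ together with a splitting $N_Y=(N_Y)_{\tau}\oplus L$ of $(N_Y)_{\tau}\hookrightarrow N_Y$ yields
$$N_X=\ker(f_N)\oplus s((N_Y)_{\tau})\oplus s(L).$$
Under these splittings, the torus $O(\tau)$, canonically the torus of $N_Y/(N_Y)_{\tau}$, is identified with $T_L$, hence with $T_{s(L)}$ via $s$.

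The key geometric step I would carry out is to verify that every cone $\sigma\in\Delta_X$ appearing in the fan of the torus-invariant open set $f^{-1}(U_{\tau})$ actually lies in the sub-vector space $\bigl(\ker(f_N)\oplus s((N_Y)_{\tau})\bigr)_{\real}$. Such a $\sigma$ satisfies $f_*(\sigma)\subseteq\tau$, hence $f_N(\sigma)\subseteq\tau\subseteq (N_Y)_{\tau,\real}$, and then the splitting forces $\sigma\subseteq f_N^{-1}\bigl((N_Y)_{\tau,\real}\bigr)=\bigl(\ker(f_N)\oplus s((N_Y)_{\tau})\bigr)_{\real}$. Applying the general product-decomposition principle recalled just before \eqref{eq_desc_aff} to this subfan situation gives
$$f^{-1}(U_{\tau})\simeq X''\times T_{s(L)},$$
where $X''$ is the toric variety for the same fan but considered in the sublattice $\ker(f_N)\oplus s((N_Y)_{\tau})$. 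Naturality of the splittings and of the product decompositions shows that under $T_{s(L)}\simeq O(\tau)$ the variety $X''$ is precisely $f^{-1}(U_{\tau'})$, and the map $f^{-1}(U_{\tau})\to U_{\tau}$ becomes $f_{\tau'}\times\mathrm{Id}_{O(\tau)}$.

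Granted this, the remaining statements follow readily. Part i) is obtained by restricting the decompositions of $U_{\tau}$ and $f^{-1}(U_{\tau})$ to the open torus orbits. For ii), the preimage of $O(\tau)\subseteq U_{\tau'}\times O(\tau)$ under $f_{\tau'}\times \mathrm{Id}$ is $f_{\tau'}^{-1}(x_{\tau'})\times O(\tau)$, and since $x_{\tau}$ corresponds to $(x_{\tau'},e)$ in the product, we get $f^{-1}(x_{\tau})\simeq f_{\tau'}^{-1}(x_{\tau'})$. For iii), the cone $\tau'$ spans $(N_Y)_{\tau,\real}$ by construction, so $U_{\tau'}$ is of contractible type; and $f_{\tau'}$ is a toric fibration because its associated lattice map is the surjection $\ker(f_N)\oplus s((N_Y)_{\tau})\twoheadrightarrow (N_Y)_{\tau}$, to which Proposition~\ref{Stein_fact}.ii) applies. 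The main technical hurdle is the geometric verification that every cone in the fan of $f^{-1}(U_{\tau})$ really lies in the specified sublattice; once that is in place, the rest is bookkeeping with the product decomposition \eqref{eq_desc_aff}.
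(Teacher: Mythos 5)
Your proposal is correct and follows essentially the same route as the paper: the paper likewise sets $N'_X=f_N^{-1}((N_Y)_{\tau})$, observes that every cone of the fan of $f^{-1}(U_{\tau})$ (being a cone $\sigma$ with $f_*(\sigma)\subseteq\tau$) lies in $(N'_X)_{\real}$, and then applies the product decomposition (\ref{eq_desc_aff}) to both $U_{\tau}$ and $f^{-1}(U_{\tau})$ via the two chosen splittings, which identify $N_X$ with $N'_X\times N_Y/(N_Y)_{\tau}$ exactly as in your direct sum $\ker(f_N)\oplus s((N_Y)_{\tau})\oplus s(L)$. The "key geometric step" you isolate is precisely the one sentence the paper devotes to it, so there is no gap.
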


\begin{proof}
Let $N'_Y=(N_Y)_{\tau}$ and $N'_X=f_N^{-1}(N'_Y)$. 
Note that $f^{-1}(U_{\tau})=\bigcup_{\sigma}U_{\sigma}$, where the union is over those $\sigma\in\Delta_X$ such that $f_*(\sigma)\subseteq\tau$.
Therefore the linear span of the support of the fan defining $f^{-1}(U_{\sigma})$ is contained in $(N'_X)_{\real}$. 

The two splittings in the lemma induce isomorphisms 
$$N_Y\simeq N'_Y\times N_Y/N'_Y\,\,\,\text{and}\,\,\,N_X\simeq N'_X\times N_Y/N'_Y$$
such that $f_N$ corresponds to $f'_N\times {\rm Id}$, where $f'_N\colon N'_X\to N'_Y$ is the induced lattice map. 
By applying (\ref{eq_desc_aff}) to both $U_{\sigma}$ and $f^{-1}(U_{\sigma})$ with respect to the subgroups $N'_Y$ and $N'_X$, respectively,
we obtain the assertions in the lemma.
\end{proof}

The following proposition describes the irreducible components for the fibers of proper toric maps.

\begin{pr}\label{str_fibers}
Let $f\colon X\to Y$ be a proper toric map. 
\begin{enumerate}
\item[i)] Every irreducible component of a fiber $f^{-1}(y)$ is a toric variety. Moreover, this is smooth or simplicial
if $X$ has this property. 
\item[ii)] If $f$ is a fibration and $y\in O(\tau)$ for some $\tau\in\Delta_Y$, then $f^{-1}(y)$ is a disjoint union 
of locally closed subsets parametrized by the cones $\sigma\in\Delta_X$ such that $f_*(\sigma)=\tau$,
with the subset corresponding to $\sigma$ being isomorphic to the torus $O(\sigma/\tau)$.
\end{enumerate}
\end{pr}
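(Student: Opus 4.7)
My plan is to reduce both assertions, by standard toric manipulations, to the case where $f$ is a toric fibration onto an affine toric variety of contractible type and $y$ is the unique torus-fixed point; in that reduced situation the answer should be forced by $T_X$-invariance of the fiber.

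\emph{Reducing (i) to a fibration.} First, one can use Remark \ref{rmk_nonsurj} to factor a non-surjective $f$ as $f = w\circ u$ with $u\colon X\to W$ proper surjective toric and $w\colon W\to Y$ finite toric; since $w^{-1}(y)$ is finite, every irreducible component of $f^{-1}(y)$ appears as one of some $u^{-1}(w')$. If $u$ is still not a fibration, one invokes Proposition \ref{Stein_fact}(iii) to do the analogous reduction through its Stein factorization. After these steps it suffices to treat a toric fibration, which is the hypothesis already present in (ii).

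\emph{Reducing to the fixed-point case.} Given a toric fibration $f$ and $y\in O(\tau)$, the plan is to apply Lemma \ref{lm_prod_str}(ii) for an equivariant identification $f^{-1}(y)\cong f^{-1}(x_\tau)$, then Lemma \ref{lm_prod_str}(iii) to replace the latter by $f_{\tau'}^{-1}(x_{\tau'})$, where $f_{\tau'}$ is a toric fibration onto an affine toric variety of contractible type. The equivariance built into these isomorphisms carries the stratification of $f^{-1}(y)$ by intersections with $T_X$-orbits to the corresponding stratification on the reduced side, and openness of $f^{-1}(U_{\tau'})\subseteq X$ preserves smoothness or simpliciality.

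\emph{Main step and conclusion.} In the reduced setting $Y=U_\tau$ with $\tau$ maximal and $y=x_\tau$ fixed, $f^{-1}(y)$ is $T_X$-invariant, hence a union of orbits $O(\sigma)$. Because $f_N$ is surjective (Proposition \ref{Stein_fact}(ii)), each induced torus morphism $O(\sigma)\to O(f_*(\sigma))$ is surjective, so $O(\sigma)\subseteq f^{-1}(y)$ exactly when $f_*(\sigma)=\tau$; here $O(\sigma/\tau)=O(\sigma)$ since $O(\tau)$ is a point, which already gives (ii). For (i), monotonicity of $f_*$ and maximality of $\tau$ in $\Delta_Y$ will force $V(\sigma_0)\subseteq f^{-1}(y)$ whenever $f_*(\sigma_0)=\tau$; picking such $\sigma_0$ minimal produces an irreducible closed subset of the fiber, and every orbit of the fiber lies in some such $V(\sigma_0)$ (by taking a face of its index minimal with $f_*$-image $\tau$). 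The minimal $\sigma_0$ form an antichain in $\Delta_X$, so the $V(\sigma_0)$ are pairwise incomparable and are precisely the irreducible components, each inheriting smoothness or simpliciality from $X$. For (ii) with general $y\in O(\tau)$ one can alternatively argue directly: $f^{-1}(y)\cap O(\sigma)$ is the fiber over $y$ of the surjective torus morphism $O(\sigma)\to O(f_*(\sigma))$, hence empty unless $f_*(\sigma)=\tau$ and otherwise a torsor under the kernel $O(\sigma/\tau)$. The main obstacle I foresee is the equivariance bookkeeping in the second reduction, i.e.\ verifying that the product decompositions of Lemma \ref{lm_prod_str} correctly identify the orbit stratifications on the two sides; once that is nailed down, the combinatorial content is essentially a monotonicity observation about the poset map $f_*$.
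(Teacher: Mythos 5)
Your proposal is correct and uses the same strategy as the paper's proof: reduce via the Stein factorization (Proposition~\ref{Stein_fact}, Remark~\ref{rmk_nonsurj}) and Lemma~\ref{lm_prod_str} to the torus-fixed case, then exploit connectedness of $T_X$ to conclude that the relevant subsets are torus-invariant, hence orbit closures. The only variation is that you push the reduction via Lemma~\ref{lm_prod_str}(iii) all the way to the case where $\tau$ is maximal, so that $f^{-1}(y)$ is itself $T_X$-invariant and its components are literally the $V(\sigma_0)$ for $\sigma_0$ minimal with $f_*(\sigma_0)=\tau$, whereas the paper stops at $y=x_\tau$ and instead deduces the structure of a component $W$ of $f^{-1}(x_\tau)$ from the $T_X$-invariance of $f^{-1}(O(\tau))\cong f^{-1}(x_\tau)\times O(\tau)$.
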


\begin{proof}
It follows from Proposition~\ref{Stein_fact} and Remark~\ref{rmk_nonsurj} that we may write $f$ as a composition
$X\overset{g}\to Z\overset{h}\to Y$, with $g$ a fibration and $h$ finite. Since a fiber of $f$ is either empty or a disjoint
union of fibers of $g$, it follows that in order to prove i), we may and will assume that $f$ is a fibration.

Let $\tau\in\Delta_Y$ and suppose that $y\in O(\tau)$. 
It follows from Lemma~\ref{lm_prod_str} that $f^{-1}(y)\simeq f^{-1}(x_{\tau})$, hence we may assume that $y=x_{\tau}$. 
Moreover, we have an isomorphism
\begin{equation}\label{eq_str_fib}
f^{-1}(O(\tau))\simeq f^{-1}(x_{\tau})\times O(\tau).
\end{equation} 
It follows that if 
 $W$ is an irreducible component of $f^{-1}(x_{\tau})$, then we get an induced isomorphism $V\simeq W\times O(\tau)$, where $V$ is an irreducible component 
 of $f^{-1}(O(\tau))$. Note that $f^{-1}(O(\tau))$ is preserved by the $T_X$-action, hence $V$ has the same property since $T_X$ is connected. 
 We conclude that the closure $\overline{V}$ 
 is equal to $V(\gamma)$ for some cone $\gamma\in\Delta_X$. 
 Note that $V$ is locally closed in $X$, hence it 
  is open in $\overline{V}$; therefore it is a toric variety with torus $O(\gamma)$.
 Since $O(\gamma)\subseteq f^{-1}(O(\tau))$, we see that $f_*(\gamma)=\tau$. 
Moreover, the isomorphism (\ref{eq_str_fib}) induces an isomorphism 
$O(\gamma)\simeq O(\gamma/\tau)\times O(\tau)$ and it is now clear that 
$W$ is a toric variety with torus $O(\gamma/\tau)$. 
Note that if $X$ is smooth or simplicial, then $V(\gamma)$ has the same property and so does $V$. 
In this case $W$ is smooth, respectively simplicial, as well. This completes the proof of i).

In order to check the assertion in ii), note that $f^{-1}(O({\tau}))$ is the union of  the orbits $O(\sigma)$,
where $\sigma$ runs over the cones of $\Delta_X$ such that $f_*(\sigma)=\tau$. For
every such $\sigma$, the isomorphism (\ref{eq_str_fib}) induces an isomorphism
$O(\sigma)\simeq Z_{\sigma}\times O(\tau)$, such that $Z_{\sigma}$ is isomorphic to $O(\sigma/\tau)$.
Since $f^{-1}(x_{\tau})$ is the disjoint union of the locally closed subsets $Z_{\sigma}$,
this completes the proof of the proposition.   
\end{proof}

\subsection*{The toric Chow lemma}
We will make use of the following toric version of Chow's lemma. For a proof 
in the case when $X$ is complete, see \cite[Theorem~6.1.18]{CLS}. For the general case,
see \cite[Theorem~2]{Sumihiro}.

\begin{pr}\label{Chow}
If $X$ is a toric variety, then there is a projective toric birational morphism $f\colon \widetilde{X}\to X$ such that $\widetilde{X}$
is quasi-projective. 
\end{pr}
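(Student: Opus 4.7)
The plan is to construct $\widetilde{X}$ as the toric variety $X(\widetilde{\Delta})$ associated with a refinement $\widetilde{\Delta}$ of $\Delta_X$ having the same support. The identity map of $N_X$ is then automatically compatible with both fans, so it induces a toric morphism $f\colon\widetilde{X}\to X$. Because $|\widetilde{\Delta}|=|\Delta_X|$, the map $f$ is proper by the criterion recalled in Section~\ref{sec_basic}, and because $f_N=\mathrm{id}_{N_X}$, the restriction of $f$ to the open tori is an isomorphism, so $f$ is birational. The real content of the proposition is therefore to arrange that $\widetilde{\Delta}$ admits a strictly convex $\widetilde{\Delta}$-piecewise linear function $\varphi\colon|\Delta_X|\to\real$, integer-valued on $N_X$: such a $\varphi$ produces a $T$-invariant Cartier divisor $D_\varphi$ on $\widetilde{X}$ which, by virtue of the strict convexity across every wall of $\widetilde{\Delta}$, is simultaneously ample on $\widetilde{X}$ (giving quasi-projectivity) and $f$-relatively ample (giving projectivity of $f$).

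To produce the pair $(\widetilde{\Delta},\varphi)$ I would use a regular (coherent) subdivision driven by a single global height function. Fix a finite set $S\subset N_X\cap|\Delta_X|$ that contains the primitive generators of every ray of $\Delta_X$ together with enough additional lattice points in the relative interiors of the higher-dimensional cones, and pick a generic function $h\colon S\to{\mathbf Z}_{\geq 0}$ vanishing on the chosen ray generators of $\Delta_X$. For each $\sigma\in\Delta_X$, project the lower faces of $\mathrm{conv}\{(v,h(v))\mid v\in S\cap\sigma\}+(\sigma\times\real_{\geq 0})$ back to $N_\real$; this gives a rational polyhedral subdivision of $\sigma$, and the piecewise linear envelope over $|\Delta_X|$ assembled from these projections is the desired $\varphi$. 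Because the same data $(S,h)$ drives the construction on every cone, the subdivisions of two cones $\sigma,\sigma'\in\Delta_X$ agree on the common face $\sigma\cap\sigma'$, so they patch into a genuine refinement $\widetilde{\Delta}$ of $\Delta_X$.

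The main obstacle is simultaneously ensuring (a) that the local subdivisions are compatible on shared faces and (b) that $\varphi$ is strictly convex across every wall of $\widetilde{\Delta}$. A naive cone-by-cone simplicial subdivision makes neither property automatic, and even when (a) is forced by hand, (b) typically fails without a coordinated choice of heights. The resolution is precisely to let one global height function $h$ determine both the subdivision and the support function at once: condition (a) then holds tautologically because the lower envelope is defined intrinsically on $|\Delta_X|$, while for sufficiently generic $h$ the lower-face projection is automatically simplicial and $\varphi$ is strictly convex across every new wall. Once the pair $(\widetilde{\Delta},\varphi)$ is in hand, the projectivity of $f$ and the quasi-projectivity of $\widetilde{X}$ both follow from the strict convexity of $\varphi$ relative to $\widetilde{\Delta}$, completing the argument.
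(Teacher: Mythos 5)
The paper does not actually prove this proposition: it cites \cite[Theorem~6.1.18]{CLS} for the complete case and \cite[Theorem~2]{Sumihiro} for the general case, so your attempt has to stand on its own. Your framework (find a refinement $\widetilde{\Delta}$ of $\Delta_X$ with the same support carrying a strictly convex integral piecewise linear function) is the right one, and the properness/birationality part is fine. The gap is in the construction of $\varphi$. Your lower envelope is computed cone by cone: on $\sigma$ it uses only the heights of $S\cap\sigma$, on an adjacent maximal cone $\sigma'$ only those of $S\cap\sigma'$. This does make the subdivisions agree on $\sigma\cap\sigma'$, but it gives no control whatsoever on the convexity of $\varphi$ \emph{across} the walls of the original fan $\Delta_X$; genericity of $h$ only buys strict convexity across the new walls lying in the interiors of the cones of $\Delta_X$, which is exactly what you wrote ("every new wall") before silently upgrading it to "every wall". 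This is not a removable technicality. When $X$ is complete, a piecewise linear function that is convex across every wall of a complete fan is globally convex, so if $X$ is complete and non-projective, the failure of your $\varphi$ to be convex along the old walls is forced for \emph{every} choice of $(S,h)$ unless the heights are coordinated globally --- and arranging that coordination is precisely the content of the proposition. What your construction genuinely proves is that $\varphi$ is strictly convex relative to each $\sigma\in\Delta_X$, i.e.\ that $f$ is a \emph{projective morphism}; since $X$ is not assumed quasi-projective, this does not yield quasi-projectivity of $\widetilde{X}$, which is the whole point. (A secondary issue: even granted strict convexity across every wall, deducing ampleness when $|\Delta_X|$ is not convex requires an extra argument, e.g.\ passing to a completion.)

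The standard ways to close the gap replace the per-cone envelope by globally convex building blocks. One route: for each maximal cone $\sigma_i\in\Delta_X$ choose a complete fan $\Sigma_i$ that is the normal fan of a lattice polytope and contains $\sigma_i$ as a union of cones, with strictly convex support function $\varphi_i$ on all of $(N_X)_{\real}$; take $\widetilde{\Delta}$ to be the common refinement of $\Delta_X$ with the $\Sigma_i$ and $\varphi=\sum_i\varphi_i$, which is globally convex because each summand is, and strictly convex on $\widetilde{\Delta}$. Another route: complete $\Delta_X$ to a fan $\Delta'$ (Sumihiro), apply the complete case of the toric Chow lemma to $\Delta'$, and take $\widetilde{\Delta}$ to be the subfan of the resulting projective refinement consisting of the cones contained in $|\Delta_X|$; then $\widetilde{X}$ is a torus-invariant open subset of a projective toric variety, hence quasi-projective, and $f$ is projective because it is proper with quasi-projective source.
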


\begin{rmk}\label{rmk_toric_resolution}
By combining the toric Chow lemma with toric resolution of singularities, we deduce that for every toric variety $X$, there is a projective birational morphism
$\pi\colon \widetilde{X}\to X$ such that $\widetilde{X}$ is smooth and quasi-projective. 
\end{rmk}

\section{Cohomology of simplicial toric varieties with convex, full-dimensional fan support}

Our goal in this section is to show that if $X$ is a complex simplicial toric variety such that the support  $|\Delta_X|$ is a full-dimensional, convex cone, then the cohomology 
of $X$ behaves similarly to the case when $X$ is complete (we refer to \cite[Chapter~5.2]{Ful93} for that case). Recall that by Remark~\ref{canonical_fibration},
saying that $|\Delta_X|$ is convex and full-dimensional is equivalent to saying that $X$ admits a proper morphism to an affine toric variety, of contractible type.

\subsection*{A good filtration by open subsets}
The key ingredient in this study is a certain filtration of $X$ by open subsets, in the case when $X$ is quasi-projective. 
This filtration has been well-understood and used in the case when $X$ is projective (see for example \cite{Kirwan} and \cite{Fieseler}). We begin by showing that such a filtration also exists when $X$ 
is quasi-projective and has convex, full-dimensional fan support. While we work over ${\mathbf C}$, we remark that the results of this subsection hold over any  algebraically
  closed field.

Let $X$ be a complex quasi-projective toric variety, with convex, full-dimensional fan support. 
We fix an ample torus-invariant Cartier divisor $D$ on $X$ and consider the corresponding polyhedron $P=P_D$
(note that $P$ might not be bounded since $X$ might not be complete). 

We first recall some basic facts related to this setting. Given $D$, to each maximal cone $\sigma\in\Delta_X$ one associates an element $u({\sigma})$
in the lattice $M_X$ such that $D\vert_{U_{\sigma}}={\rm div}(\chi^{-u(\sigma)})$. 
If $P_0$ is the convex hull of the $u(\sigma)$, where 
$\sigma$ varies over the maximal cones of $\Delta_X$, then $P=P_0+|\Delta_X|^{\vee}$. If $Q$ is a face of $P$, then we get a cone $\sigma_Q\in\Delta_X$ 
defined by
$$\sigma_Q=\{w\in (N_X)_{\real}\mid \langle u,w\rangle \leq\langle u',w\rangle\,\,\text{for all}\,\,u\in Q,u'\in P\}.$$
Moreover, each cone in $\Delta_X$ corresponds in this way to a unique face of $P$.
Note that for every maximal cone $\sigma\in\Delta_X$, the corresponding $u({\sigma})$ is a vertex of $P$ and the cone corresponding to $u({\sigma})$ is
$\sigma$, that is, $\sigma^{\vee}$ is generated by $\{u-u(\sigma)\mid u\in P\}$. All these facts are well-known when $X$ is complete. The proofs easily extend to 
our setting, see \cite[Chapter~6]{Mustata}.

Suppose now that
$v\in |\Delta_X|\cap N_X$ is such that the following conditions are satisfied:
\begin{enumerate}
\item[C1)] $v$ is not orthogonal to any minimal generator of the pointed cone $|\Delta_X|^{\vee}$ (equivalently, $v$ does not lie on any facet
of $|\Delta_X|$), and
\item[C2)] The integers
$\langle u(\sigma),v\rangle$, when $\sigma$ varies over the maximal cones in $\Delta_X$, are mutually distinct.
\end{enumerate}
We denote by $\gamma_v\colon \comp^*\to T_X$    the one-parameter subgroup of $T_X$ corresponding to $v$. 
Suppose that $\sigma_1,\ldots,\sigma_r$ are the maximal cones in $\Delta$, ordered such that
\begin{equation}\label{order}
\langle u(\sigma_1),v\rangle<\ldots<\langle u(\sigma_r),v\rangle
\end{equation}
(condition C2) above implies that there is such an ordering and this is of course unique).
For every $i$, with $1\leq i\leq r$, we denote by $x_i$ the torus-fixed point in $U_{\sigma_i}$.

\begin{pr}\label{prop_filtration}
With the above notation, the following hold:
\begin{enumerate}
\item[i)] The only fixed points for the $\comp^*$-action on $X$ induced by $\gamma_v$ are $x_1,\ldots,x_r$.
\item[ii)] For every $i$, let
$X_i$ consist of those $x\in X$ such that the map $\gamma_{v,x}\colon \comp^*\to X$, given by $\gamma_{v,x}(t)=\gamma_v(t)\cdot x$,
extends to a map $\widetilde{\gamma}_{v,x}\colon {\mathbf A}^{\!1}\to X$ with $\widetilde{\gamma}_{v,x}(0)=x_i$. If 
$U_i:=\bigcup_{j\leq i}X_j$, then $U_i$ is open in $X$ for every $i$ and $U_r=X$.
\end{enumerate}
\end{pr}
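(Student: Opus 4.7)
My plan is to extract both parts from the duality between cones of $\Delta_X$ and faces of $P$, together with the behavior of $\gamma_v$ on the torus orbits $O(\tau)\subseteq U_{\sigma}$.

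For (i), the $\gamma_v$-fixed locus is $T_X$-invariant (as $T_X$ is abelian), hence a union of $T_X$-orbits. The cocharacter $\gamma_v$ acts on the torus $O(\tau)$ via the image of $v$ in $N_X/(N_X)_{\tau}$, so $O(\tau)$ is pointwise fixed if and only if $v$ lies in the (real) linear span $\mathrm{span}(\tau)$. Each maximal cone $\sigma_i$ satisfies $\mathrm{span}(\sigma_i)=(N_X)_{\mathbf R}$ and $O(\sigma_i)=\{x_i\}$, so every $x_i$ is $\gamma_v$-fixed. Conversely, assume $v\in\mathrm{span}(\tau)$ for some non-maximal $\tau\in\Delta_X$, and let $Q$ be the face of $P$ dual to $\tau$. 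Its vertices are the $u(\sigma_k)$ for $\sigma_k\supseteq\tau$, and its recession cone is $|\Delta_X|^{\vee}\cap\tau^{\perp}$; since $Q$ lies in an affine subspace orthogonal to $\mathrm{span}(\tau)$, the functional $u\mapsto\langle u,v\rangle$ must be constant on $Q$. If $Q$ has two or more vertices, this contradicts (C2). Otherwise $Q$ has a single vertex, and being of positive dimension it must be unbounded with a nonzero recession direction $d\in|\Delta_X|^{\vee}$. Writing $d=\sum_j t_j d_j'$ as a nonnegative combination of the minimal generators $d_j'$ of $|\Delta_X|^{\vee}$, condition (C1) yields $\langle d,v\rangle=\sum_j t_j\langle d_j',v\rangle>0$, again a contradiction.

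For (ii), each $X_i$ is $T_X$-invariant: if $x\in X_i$ and $t\in T_X$, then $\gamma_v(s)\cdot(t\cdot x)=t\cdot\gamma_v(s)\cdot x\to t\cdot x_i=x_i$ as $s\to 0$, since $x_i$ is $T_X$-fixed. Thus for $x\in O(\tau)$ it suffices to compute $\lim_{s\to 0}\gamma_v(s)\cdot x_{\tau}$. The $\gamma_v$-orbit of $x_{\tau}$ stays in $O(\tau)$ and hence inside every chart $U_{\sigma}$ with $\sigma\supseteq\tau$. A direct computation in monomial coordinates on such a chart shows that the limit exists if and only if $v\in\sigma+\mathrm{span}(\tau)$, and when it does, it is the distinguished point of the cone $\tau''\supseteq\tau$ corresponding, via the polytope dictionary, to the face of $Q$ on which $\langle\cdot,v\rangle$ attains its minimum. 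By (C1) this infimum is attained on $Q$ (the recession argument of the previous paragraph excludes an infimum pushed to infinity), and by (C2) it is attained at a unique vertex $u(\sigma_l)$ of $Q$; because the vertices of $Q$ are $\{u(\sigma_k)\colon\sigma_k\supseteq\tau\}$ and their values $\langle u(\sigma_k),v\rangle$ are strictly ordered by (\ref{order}), this $l$ must equal $l(\tau):=\min\{k\colon\sigma_k\supseteq\tau\}$. Taking $\sigma=\sigma_l$, the limit exists in $U_{\sigma_l}$ and equals $x_l$.

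Consequently $X_j$ is the union of those orbits $O(\tau)$ with $l(\tau)=j$, so
\[
U_i=\bigsqcup_{l(\tau)\le i}O(\tau)=\bigcup_{k\le i}\bigsqcup_{\tau\subseteq\sigma_k}O(\tau)=\bigcup_{k\le i}U_{\sigma_k},
\]
which is open as a union of open affine subsets. In particular $U_r=\bigcup_k U_{\sigma_k}=X$, since every cone in $\Delta_X$ is a face of some maximal cone. The delicate point, in my view, is the polytope-dual identification of the target of the limit together with the combined use of (C1) and (C2) to secure both existence and uniqueness of the minimizing vertex of $Q$; the description of $U_i$ then follows by orbit bookkeeping.
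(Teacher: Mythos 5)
Your proof is correct, and its engine is the same as the paper's: the cone--face duality for $P$ together with conditions C1) and C2), deployed exactly where the paper uses them (C1 to control recession directions in $|\Delta_X|^{\vee}$, C2 to separate the values $\langle u(\sigma_k),v\rangle$). There are two genuine, if modest, departures. First, in i) you argue orbit-by-orbit --- the fixed locus is a union of orbits, and $O(\tau)$ is fixed if and only if $v\in\mathrm{span}(\tau)$, which you exclude for non-maximal $\tau$ via the dual face $Q$ --- whereas the paper works inside a chart $U_{\sigma_i}$ with the semigroup of characters not vanishing at a putative fixed point; these are dual formulations of the same computation. Second, and more substantively, in ii) the paper first proves that every $\gamma_{v,x}$ extends to ${\mathbf A}^{\!1}$ by invoking the canonical proper fibration to an affine toric variety of contractible type and the valuative criterion of properness; you get existence of the limit for free from the explicit minimization of $\langle\cdot,v\rangle$ on $Q$ (attained, by C1) and C2), at the unique vertex $u(\sigma_{l(\tau)})$), which also hands you the clean identification $U_i=\bigcup_{k\le i}U_{\sigma_k}$ and hence openness without the paper's ``closed under passage to faces'' argument. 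The only points you leave implicit are standard features of the duality that the paper also uses without proof (e.g.\ that the vertices of $Q$ are exactly the $u(\sigma_k)$ with $\sigma_k\supseteq\tau$, and that $\dim Q>0$ when $\tau$ is not maximal).
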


\begin{proof}
It is clear that $x_1,\ldots,x_r$ are fixed points for the $\comp^*$-action since they are fixed by the $T_X$-action.
Therefore in order to prove i) it is enough to show that if $x\in U_{\sigma_i}$ is a $\comp^*$-fixed point, then $x=x_i$.
By definition, the action of $\gamma_v$ on $U_{\sigma_i}$ is such that
\begin{equation}\label{formula_action}
\chi^u(\gamma_v(t)\cdot x)=t^{\langle u,v\rangle}\chi^u(x)
\end{equation}
for every $t\in \comp^*$ and $u\in\sigma_i^{\vee}\cap M_X$. Since $x$ is a fixed point for the $\comp^*$-action, it follows that
$\langle u,v\rangle=0$ for all $u\in\sigma_i^{\vee}\cap M_X$
such that $\chi^u(x)\neq 0$.

We need to show that
$S:=\{u\in\sigma_i^{\vee}\cap M_X\mid \chi^u(x)\neq 0\}$ is equal to $\{0\}$. We have seen that $S\subseteq v^{\perp}$. 
Note that for $u_1,u_2\in\sigma_i^{\vee}\cap M_X$, we have $u_1+u_2\in S$ if and only if $u_1,u_2\in S$.
Since $\sigma_i^{\vee}$ is generated as a convex cone by $|\Delta_X|^{\vee}$ and by $\{u(\sigma_j)-u(\sigma_i)\mid j\neq i\}$,
it follows that if $S\neq\{0\}$, then either $v$ is orthogonal to a ray of $|\Delta_X|^{\vee}$ or it is orthogonal to
some $u(\sigma_j)-u(\sigma_i)$, with $j\neq i$. Since both these conclusions contradict conditions C1) and C2) above,
we conclude that $S=\{0\}$, completing the proof of i).

We note that for every $x\in X$, the map $\gamma_{v,x}\colon \comp^*\to X$ extends to a map ${\mathbf A}^{\!1}\to X$.
Indeed, consider the canonical toric fibration $f\colon X\to Y$, where $Y$ is an affine toric variety of contractible type (see Remark~\ref{canonical_fibration}).
Since $Y$ is affine and the image of $v$ in $N_Y$ lies in the cone defining $Y$, the composition $f\circ\gamma_{v,x}$ extends to a map
${\mathbf A}^{\!1}\to Y$ (see \cite[Chapter 2.3]{Ful93}). 
Since $f$ is proper, the valuative criterion for properness implies that  $\gamma_{v,x}$ extends to a map $\widetilde{\gamma}_{v,x}\colon {\mathbf A}^{\!1}\to X$. Since $\widetilde{\gamma}_{v,x}(0)$ is clearly fixed
by the $\comp^*$-action induced by $\gamma_v$, it follows from i)  that it is equal to one of the $x_i$. Therefore $\bigcup_{i=1}^rX_i=X$.

Let us describe now $X_i$. If $x\in X_i$, then it is clear that $x\in U_{\sigma_i}$. 
Suppose now that $x\in U_{\sigma_i}$ is arbitrary.
Using again (\ref{formula_action}), we conclude that $\gamma_{v,x}(0)=x_i$
if and only if for every $u\in (\sigma_i^{\vee}\cap M_X)\smallsetminus\{0\}$ with $\chi^u(x)\neq 0$, we have 
$\langle u,v\rangle>0$. 
Let $\tau$ be the face of $\sigma_i$ such that $x\in O({\tau})$.
In this case, for $u\in \sigma_i^{\vee}\cap M_X$ we have $\chi^u(x)\neq 0$ if and only if $u\in\sigma_i^{\vee}\cap\tau^{\perp}\cap M_X$. 
We thus conclude that
$X_i=\bigcup_{\tau}O({\tau})$,
where the union is over the faces $\tau$ of $\sigma_i$ such that 
$\langle u,v\rangle>0$ for every $u\in
(\sigma_i^{\vee}\cap\tau^{\perp}\cap M_X)\smallsetminus\{0\}$.

On the other hand, a face $\tau$ of $\sigma_i$ is of the form $\sigma_Q$ for a unique face $Q$ of $P$ such that $u({\sigma_i})\in Q$.
In this case $\sigma_i^{\vee}\cap\tau^{\perp}$ is generated by $\{u-u(\sigma_i)\mid u\in Q\}$. We deduce that
$X_i$ is the union of those $O({\sigma_Q})$, over the faces $Q$ of $P$ containing $u({\sigma_i})$ having the property that 
$\langle u,v\rangle >\langle u(\sigma_i),v\rangle$ for every $u\in Q$, with $u\neq u(\sigma_i)$. 
On the other hand, since $P=|\Delta_X|^{\vee}+P_0$, we see that for every face $Q$ of $P$, an element $u\in Q$ can be written as
$u=u'+\sum_i\lambda_iu(\sigma_i)$, with $u'\in|\Delta_X|^{\vee}$ and $\lambda_i\in\real_{\geq 0}$, with $\sum_i\lambda_i=1$. 
Since $v\in |\Delta_X|$, it follows from the way we have ordered the maximal cones that 
$$O(\sigma_Q)\subseteq X_i\quad \text{if and only if}\quad i=\min\{j\mid u(\sigma_j)\in Q\}.$$

We conclude that $U_i$ is the union of those orbits $O(\sigma_Q)$ (with $Q$ a face of $P$) such that some $u(\sigma_j)$, with $j\leq i$, lies in $Q$. 
It is clear that if $Q$ has this property, then any face of $P$ that contains $Q$ also has this property. Equivalently, $U_i$ is a union of orbits
such that $O(\sigma)\subseteq U_i$, then $O(\tau)\subseteq U_i$ for every face $\tau$ of $\sigma$. This implies that $U_i$ is open. 
Since $U_r=X_1\cup\ldots\cup X_r=X$,
this completes the proof of ii).
\end{proof}

\begin{cor}\label{cor_filtration}
If $X$ is a quasi-projective toric variety, with convex, full-dimensional fan support, then there are open
torus-invariant subsets 
$$\emptyset=U_0\subseteq U_1\subseteq\ldots\subseteq U_r=X$$
 such that for every $i\geq 1$, the set $U_i\smallsetminus U_{i-1}$ is a closed subset of some $U_{\sigma}$,
where $\sigma$ is a maximal cone in $\Delta_X$. Moreover, if $X$ is smooth (resp. simplicial), then each $U_i\smallsetminus U_{i-1}$
is an affine space (resp. the quotient of an affine space by a finite group).
\end{cor}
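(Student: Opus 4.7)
The plan is to extract the desired filtration directly from Proposition~\ref{prop_filtration} and then analyze the pieces $X_i$ explicitly on each affine chart. First, I would check that the hypotheses of Proposition~\ref{prop_filtration} can be met: since $X$ is quasi-projective and the Picard group of a toric variety is generated by torus-invariant divisor classes, there exists an ample torus-invariant Cartier divisor $D$ on $X$. Next, I would choose $v \in |\Delta_X| \cap N_X$ satisfying C1 and C2; these are non-empty Zariski-open conditions on the lattice points of the top-dimensional cone $|\Delta_X|$, so such $v$ exists. Proposition~\ref{prop_filtration} then yields the chain $\emptyset = U_0 \subseteq U_1 \subseteq \ldots \subseteq U_r = X$ of $T_X$-invariant open subsets with $U_i \smallsetminus U_{i-1} = X_i$, the set of points whose $\gamma_v$-trajectory limits to $x_i$.

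To see that $X_i$ is closed in $U_{\sigma_i}$, I would invoke the explicit description already established inside the proof of Proposition~\ref{prop_filtration}: a point $x \in U_{\sigma_i}$ lies in $X_i$ if and only if $\chi^u(x) = 0$ for every nonzero $u \in \sigma_i^{\vee} \cap M_X$ with $\langle u, v \rangle \leq 0$. This visibly cuts out a closed subscheme of $U_{\sigma_i}$. At this point I would also observe that the full-dimensionality of $|\Delta_X|$ forces every maximal cone $\sigma_i$ to be itself full-dimensional, since only full-dimensional cones have non-empty interior in $(N_X)_{\real}$, and every interior point of $|\Delta_X|$ lies in the relative interior of a cone of $\Delta_X$.

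For the affine structure in the smooth case, $\sigma_i$ is generated by a $\mathbb{Z}$-basis of $N_X$, so $U_{\sigma_i} \cong \mathbb{A}^n$ with coordinates $\chi^{m_j}$ dual to this basis. Since $\sigma_i^{\vee}$ is generated (as extracted in the proof of the proposition) by $|\Delta_X|^{\vee} \cup \{u(\sigma_k) - u(\sigma_i) : k \neq i\}$, every ray of $\sigma_i^{\vee}$ is proportional to one of these generators, so conditions C1 and C2 force $\langle m_j, v \rangle \neq 0$ for all $j$. Hence $X_i$ is the coordinate subspace defined by the vanishing of the $\chi^{m_j}$ with $\langle m_j, v \rangle < 0$, namely an affine space. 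In the simplicial case, I would pass to the finite cover $\widetilde{U}_{\sigma_i} \cong \mathbb{A}^n \to U_{\sigma_i}$ associated to the sublattice generated by the rays of $\sigma_i$, which realizes $U_{\sigma_i}$ as the quotient by a finite abelian group $G$. Pulling back the defining equations, the preimage of $X_i$ in $\mathbb{A}^n$ is the analogous coordinate subspace, manifestly $G$-invariant, exhibiting $X_i$ as the quotient of an affine space by $G$. The most delicate point is verifying that the extremal rays of $\sigma_i^{\vee}$ are indeed covered by C1, C2, and in the simplicial case matching the rays of $\sigma_i^\vee$ over the enlarged dual lattice to those over $M_X$; once this is in place, the remaining work is bookkeeping.
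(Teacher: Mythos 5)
Your proposal is correct and follows essentially the same route as the paper: extract the filtration $U_i$ and the strata $X_i$ from Proposition~\ref{prop_filtration}, invoke the explicit description of $X_i$ by vanishing of monomials $\chi^u$, and identify each $X_i$ as an affine space (or finite quotient thereof) by analyzing the rays of $\sigma_i^\vee$.

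Two small comments. First, the ``delicate point'' you flag at the end is slightly mis-identified. The rays of $\sigma_i^\vee$ as a cone in $M_\mathbb{R}$ do not depend on whether you use $M_X$ or the enlarged dual lattice $\widetilde{M}$, so there is nothing to ``match.'' What actually needs verification is that the common vanishing locus of all $\chi^u$, as $u$ ranges over $(\sigma_i^\vee\cap M_X)\smallsetminus\{0\}$ with $\langle u,v\rangle\leq 0$, is precisely the single coordinate subspace $\{\chi^{w_j}=0 : \langle w_j,v\rangle\leq 0\}$ (where the $w_j$ generate the rays of $\sigma_i^\vee$), rather than some union of coordinate subspaces. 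This follows from a one-line divisibility argument: any such $u$ has a positive multiple $\sum_j a_j w_j$ with $a_j\in\mathbb{Z}_{\geq 0}$, and $\langle u,v\rangle\leq 0$ forces some $j$ with $a_j>0$ and $\langle w_j,v\rangle\leq 0$, so $\chi^{w_j}\mid\chi^{cu}$; conversely the $w_j$ (or their $M_X$-multiples) are themselves among the $u$'s. The paper spells this out and it works directly over $M_X$, identifying $X_i=U_{\sigma_i}\cap V(\tau_i)$ for the face $\tau_i$ of $\sigma_i$ spanned by the corresponding rays — so passing to the finite cover $\mathbb{A}^n$ is not necessary, though it is a fine alternative. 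Second, your justification that every maximal cone is full-dimensional shows only that full-dimensional cones exist; to conclude that \emph{every} maximal cone is full-dimensional one should add the density argument (the full-dimensional cones cover a dense subset of $|\Delta_X|$, so every lower-dimensional cone, being contained in the closure of that set, lies in some full-dimensional cone and hence is not maximal). Also, your observation that C1 and C2 force $\langle m_j,v\rangle\neq 0$ is true but not needed: if equality were possible, the corresponding coordinates would simply also be set to zero and $X_i$ would still be a coordinate subspace.
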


\begin{proof}
We consider the $U_i$ given by
Proposition~\ref{prop_filtration}, hence $U_i\smallsetminus U_{i-1}=X_i$. 
We have seen in the proof of the proposition that $X_i\subseteq U_{\sigma_i}$ and 
it follows from definition that 
\begin{equation}\label{eq_cor_filtration}
X_i=\{x\in U_{\sigma_i}\mid\chi^u(x)=0\,\,\text{if}\,\,u\in(\sigma_i^{\vee}\cap M_X)\smallsetminus\{0\}, \langle u,v\rangle\leq 0\}.
\end{equation}
Therefore $X_i$ is closed in $U_{\sigma_i}$. 

Suppose now that $X$ is simplicial and let $v_1,\ldots,v_n$ be the primitive generators of the rays in $\sigma_i$. If $w_1,\ldots,w_n\in M_X$ are
such that $\langle w_j,v_j\rangle>0$ for all $j$ and $\langle w_j,v_k\rangle=0$ for $j\neq k$, then it is easy to deduce from
(\ref{eq_cor_filtration})
that 
$$X_i=\{x\in U_{\sigma_i}\mid \chi^{w_j}(x)=0\,\,\text{if}\,\,\langle w_j,v\rangle\leq 0\}$$
(note that for every nonzero $u\in \sigma_i^{\vee}\cap M_X$, some positive multiple of $u$ can be written as $\sum_ja_jw_j$, with $a_j\in{\mathbf Z}_{\geq 0}$,
and if $\langle u,v\rangle\leq 0$, then there is $j$ with $a_j>0$ and $\langle w_j,v\rangle\leq 0$). This implies that 
$X_i=U_{\sigma_i}\cap V(\tau_i)$, where $\tau_i$ is the face of $\sigma_i$ spanned by those $v_j$, with $j$ such that 
$\langle w_j,v\rangle\leq 0$. In particular, $X_i$ is a simplicial affine toric variety of contractible type (smooth, if $X$ is smooth).
This completes the proof of the corollary.
\end{proof}

\begin{rmk}
The hypothesis that $X$ is quasi-projective is crucial for the construction of the filtration in Corollary~\ref{cor_filtration}. For an example of a complete variety 
for which the construction does not lead to a filtration by open subsets, see \cite{Jur77}.
\end{rmk}

\begin{rmk}
One could construct the filtration in Corollary~\ref{cor_filtration} also following the approach in \cite[Chapter~5.2]{Ful93}. 
\end{rmk}

\subsection*{Applications to cohomology}
 Recall that by work of Deligne \cite{Deligne}, 
for every
complex algebraic variety (assumed to be reduced, but possibly reducible), the cohomology $H^q(X,{\mathbf Q})$ and
$H^q_c(X,{\mathbf Q})$ (singular cohomology and cohomology with compact support) carry natural (rational)  mixed Hodge structures.  A pure Hodge structure of weight $q$
is \emph{of Hodge-Tate type} if all Hodge numbers $h^{i,j}$ vanish, unless $i=j$. In particular, if the underlying vector space is nonzero, then $q$ is even.

We can now give the main results concerning the cohomology of simplicial toric varieties that have convex, full-dimensional fan support.

\begin{tm}\label{BM}
If $X$ is a smooth, quasi-projective, complex toric variety, which has convex, full-dimensional fan support,
then all maps $A_m(X)\to H_{2m}^{BM}(X)$ are isomorphisms, where $A_m(X)$ is the Chow group of $m$-dimensional cycle classes and $H^{BM}_{2m}(X)$ is the $(2m)^{\rm th}$ Borel-Moore homology
group of $X$. If $X$ is not smooth, but simplicial, then the maps are isomorphisms after tensoring with $\rat$.
\end{tm}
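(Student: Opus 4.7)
The plan is to use the filtration $\emptyset=U_0\subseteq U_1\subseteq\ldots\subseteq U_r=X$ provided by Corollary~\ref{cor_filtration}, together with the localization sequences for Chow groups and Borel--Moore homology, and run an induction on $i$. For each $i$, the difference $X_i:=U_i\smallsetminus U_{i-1}$ is closed in $U_i$ and is isomorphic to an affine space ${\mathbf A}^{d_i}$ in the smooth case, or to a quotient ${\mathbf A}^{d_i}/G_i$ by a finite group in the simplicial case. In either case, $A_m(X_i)$ and $H^{BM}_{2m}(X_i)$ are both concentrated in the single degree $m=d_i$, where they are free of rank one (over $\zed$ when smooth, over $\rat$ after tensoring when simplicial), and the cycle map between them is a known isomorphism; moreover $H^{BM}_{\rm odd}(X_i)=0$ (rationally in the simplicial case, since for a finite quotient the rational Borel--Moore homology is the $G_i$-invariants of that of ${\mathbf A}^{d_i}$).

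The inductive hypothesis I would carry is that $A_m(U_{i-1})\to H^{BM}_{2m}(U_{i-1})$ is an isomorphism for every $m$, and that $H^{BM}_{\rm odd}(U_{i-1})=0$. For the inductive step, I compare the localization sequences for the closed inclusion $X_i\hookrightarrow U_i$ with open complement $U_{i-1}$. The odd-degree vanishing for both $X_i$ and $U_{i-1}$ forces the long exact sequence in Borel--Moore homology to split into short exact sequences
\begin{equation*}
0\to H^{BM}_{2m}(X_i)\to H^{BM}_{2m}(U_i)\to H^{BM}_{2m}(U_{i-1})\to 0,
\end{equation*}
and in particular $H^{BM}_{\rm odd}(U_i)=0$. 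On the Chow side, one has the right-exact sequence
\begin{equation*}
A_m(X_i)\to A_m(U_i)\to A_m(U_{i-1})\to 0.
\end{equation*}
Using that the composition $A_m(X_i)\to H^{BM}_{2m}(X_i)\to H^{BM}_{2m}(U_i)$ is injective (the first map by the cell computation, the second by the Borel--Moore short exact sequence just displayed), the left map $A_m(X_i)\to A_m(U_i)$ is also injective. A standard five-lemma diagram chase comparing the two short exact sequences, using the inductive isomorphism on $U_{i-1}$ and the cell isomorphism on $X_i$, yields that $A_m(U_i)\to H^{BM}_{2m}(U_i)$ is an isomorphism. Taking $i=r$ gives the theorem.

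The base case $i=1$ is subsumed in the same argument (with $U_0=\emptyset$), so there is no separate step to record. The only real subtlety is the simplicial case: here one must work with $\rat$-coefficients throughout, and justify that for a finite quotient $X_i\simeq{\mathbf A}^{d_i}/G_i$ both $A_*(X_i)_\rat$ and $H^{BM}_{2*}(X_i;\rat)$ reduce to the $G_i$-invariants of those of ${\mathbf A}^{d_i}$ (and hence both are $\rat$ in degree $d_i$ and zero elsewhere, with the cycle map an iso). I expect this verification, together with ensuring that $X_i$ is genuinely closed in $U_i$ (which follows because $U_{i-1}=\bigcup_{j<i}X_j$ is open in $X$ by Proposition~\ref{prop_filtration}), to be the only mildly delicate points; the rest is a formal induction on the cell filtration.
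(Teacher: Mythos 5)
Your argument is correct and is precisely the approach the paper intends: the paper simply states that the proof follows verbatim the one in Fulton's Chapter~5.2, using Corollary~\ref{cor_filtration}, and what you have written out is exactly that cell-filtration induction via the compatible localization sequences for Chow groups and Borel--Moore homology.
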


\begin{proof}
We omit the proof, as it follows verbatim the one in
\cite[Chapter~5.2]{Ful93}, using Proposition~\ref{cor_filtration}
\end{proof}

\begin{tm}\label{pure_coh_global}
Let $X$ be a simplicial complex toric variety which has convex, full-dimensional fan support.
For every $q$, the mixed Hodge structures on $H^q(X,\rat)$ and $H^q_c(X,\rat)$ are 
pure, of weight $q$, and of Hodge-Tate type. In particular, both $H^*(X,\rat)$ and $H_c^*(X,\rat)$ are even\footnote{We say that a graded vector space
$\bigoplus_{i\in\zed}V^i$ is \emph{even} if $V^i=0$ for all odd $i$.}. 
\end{tm}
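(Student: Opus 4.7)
The plan is to first establish the theorem when $X$ is quasi-projective, where Corollary \ref{cor_filtration} supplies a manageable stratification, and then to reduce the general case via the toric Chow lemma combined with the decomposition theorem.

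Assume first that $X$ is quasi-projective. Corollary \ref{cor_filtration} gives torus-invariant open sets $\emptyset = U_0 \subset U_1 \subset \cdots \subset U_r = X$ such that each difference $Z_i := U_i \setminus U_{i-1}$ is a finite quotient of an affine space $\mathbf{A}^{d_i}$. Since a finite group acting linearly on $\mathbf{A}^{d_i}$ acts trivially on $H^*_c(\mathbf{A}^{d_i},\rat)$, we get $H^q_c(Z_i,\rat) = \rat(-d_i)$ for $q = 2d_i$ and zero otherwise---pure Hodge-Tate of weight $q$. I would proceed by induction on $i$: assuming $H^q_c(U_{i-1},\rat)$ is pure of weight $q$ and Hodge-Tate for every $q$, consider the long exact sequence of MHS
\[
\cdots \to H^q_c(U_{i-1}) \to H^q_c(U_i) \to H^q_c(Z_i) \to H^{q+1}_c(U_{i-1}) \to \cdots .
\]
The connecting map is a morphism of MHS between pure structures of distinct weights $q$ and $q+1$, hence vanishes; this leaves a short exact sequence with pure Hodge-Tate ends of weight $q$. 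Strictness of MHS morphisms with respect to the Hodge filtration then forces the middle term to satisfy $F^{p+1} = 0$ and $F^p = $ whole, with $p = q/2$, and to be pure of weight $q$, so it is also pure Hodge-Tate of weight $q$. This gives the assertion for $H^*_c(X,\rat)$. To deduce the assertion for $H^*(X,\rat)$, observe that $X$ is a $\rat$-homology manifold of complex dimension $n = \dim X$ (since it has only quotient singularities), so Poincar\'e--Verdier duality yields an isomorphism of MHS $H^q(X,\rat) \simeq H^{2n-q}_c(X,\rat)^{\vee}(-n)$; dualizing and twisting preserves the Hodge-Tate property while transporting weight $2n-q$ to weight $q$.

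For the general case, combine the toric Chow lemma (Proposition \ref{Chow}) with toric resolution (Remark \ref{rmk_toric_resolution}) to produce a projective birational toric morphism $\pi \colon \widetilde{X} \to X$ with $\widetilde{X}$ smooth and quasi-projective. Because $\pi$ arises from a refinement of $\Delta_X$, one has $|\Delta_{\widetilde{X}}| = |\Delta_X|$, so $\widetilde{X}$ has convex, full-dimensional fan support as well, and the quasi-projective case applies to $\widetilde{X}$. Since $X$ is simplicial, $IC_X \simeq \rat_X[\dim X]$; the BBD decomposition theorem realizes $\rat_X[\dim X]$ as a direct summand of $R\pi_*\rat_{\widetilde{X}}[\dim \widetilde{X}]$, and M.\ Saito's theory of mixed Hodge modules lifts this to a splitting in the category of MHS. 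Consequently, $\pi^*$ exhibits $H^q(X,\rat)$ (respectively $H^q_c(X,\rat)$) as a sub-MHS of the pure Hodge-Tate structure $H^q(\widetilde{X},\rat)$ (respectively $H^q_c(\widetilde{X},\rat)$), forcing it to be pure of weight $q$ and Hodge-Tate. The main technical point of the proof is the Hodge-theoretic upgrade of the decomposition theorem used in this reduction; one could instead invoke the weights trick of Remark \ref{weights_trick}, which uses strictness of MHS morphisms under proper surjections from a smooth variety to obtain the same purity conclusion more elementarily.
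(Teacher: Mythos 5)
Your proposal is correct and follows essentially the same path as the paper: the filtration of Corollary~\ref{cor_filtration} fed into the compactly-supported long exact sequence for the quasi-projective case, Poincar\'e duality on the $\rat$-manifold $X$ to pass between $H^*$ and $H^*_c$, and a toric resolution $\pi\colon\widetilde{X}\to X$ together with injectivity of $\pi^*$ for the general case. The only real difference is in the last step, where you invoke Saito's mixed Hodge modules to split $H^q(X,\rat)$ off inside $H^q(\widetilde{X},\rat)$; the paper obtains the needed injectivity of $\pi^*$ more elementarily from Poincar\'e duality and the projection formula, and in any case the direct-summand statement $\rat_X\subset R\pi_*\rat_{\widetilde{X}}$ already suffices without Saito's theory, since $\pi^*$ is a morphism of mixed Hodge structures by Deligne's functoriality and an injective such morphism exhibits its source as a sub-MHS of its target.
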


We first prove the following lemma.

\begin{lm}\label{lem_surj_coh}
Let $X$ be a smooth, quasi-projective, complex toric variety, with convex, full-dimensional fan support. If $U_0\subseteq\ldots\subseteq U_r=X$
is a filtration as in Corollary~\ref{cor_filtration} and if $Z_i=X\smallsetminus U_i$, then the following hold:
\begin{enumerate}
\item[(a)] If $q$ is odd and $0\leq i\leq r$, then $H^q_c(Z_i,\rat)=0$.
\item[(b)] If $q$ is even and $0\leq i\leq r-1$, then we have an exact sequence
$$0\to H_c^q(Z_i\smallsetminus Z_{i+1},\rat)\to H_c^q(Z_i,\rat)\to H_c^q(Z_{i+1},\rat)\to 0.$$
\end{enumerate}
In particular, for every $q$, the mixed Hodge structure on $H^q_c(X,\rat)$ is pure, of weight $q$, and of Hodge-Tate type.
\end{lm}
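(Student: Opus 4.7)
The plan is to use descending induction on $i$, starting from the trivial base case $i=r$ (where $Z_r=\emptyset$) and working down to $i=0$. The key geometric input, taken directly from Corollary~\ref{cor_filtration}, is that since $X$ is smooth each ``stratum'' $Z_i\smallsetminus Z_{i+1}=U_{i+1}\smallsetminus U_i=X_{i+1}$ is a complex affine space, say of dimension $d_{i+1}$. Consequently $H_c^q(X_{i+1},\rat)$ vanishes for $q\neq 2d_{i+1}$, and for $q=2d_{i+1}$ it equals $\rat(-d_{i+1})$, a Hodge structure which is pure of weight $2d_{i+1}$ and of Hodge-Tate type.

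The main tool will be Deligne's long exact sequence of mixed Hodge structures associated with the closed immersion $Z_{i+1}\hookrightarrow Z_i$ and its open complement $X_{i+1}$:
\[
\cdots\to H_c^{q-1}(Z_{i+1},\rat)\to H_c^q(X_{i+1},\rat)\to H_c^q(Z_i,\rat)\to H_c^q(Z_{i+1},\rat)\to H_c^{q+1}(X_{i+1},\rat)\to\cdots.
\]
For $q$ odd, both $H_c^q(X_{i+1},\rat)$ and $H_c^q(Z_{i+1},\rat)$ vanish---the first by the affine-space computation, the second by the inductive version of (a)---forcing $H_c^q(Z_i,\rat)=0$ and establishing (a) at stage $i$. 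For $q$ even, the terms $H_c^{q-1}(Z_{i+1},\rat)$ and $H_c^{q+1}(X_{i+1},\rat)$ vanish (the first by the inductive hypothesis, the second by parity), so the sequence collapses to the short exact sequence claimed in (b).

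For the concluding assertion, I would run the same induction while carrying along the additional claim that $H_c^q(Z_i,\rat)$ is pure of weight $q$ and of Hodge-Tate type. For $q$ odd this is vacuous by part (a). For $q$ even, the short exact sequence from (b) is one of mixed Hodge structures in which, by induction together with the first step, both outer terms are pure of weight $q$ and Hodge-Tate. Since an extension of two pure Hodge structures of the same weight $q$ is itself pure of weight $q$ (by strictness of morphisms of mixed Hodge structures with respect to the weight filtration), and since the Hodge-Tate condition is preserved under extensions of common weight (Hodge numbers being additive on short exact sequences), the middle term $H_c^q(Z_i,\rat)$ inherits both properties. Specializing to $i=0$ yields the ``in particular'' statement for $X=Z_0$. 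I do not foresee a serious obstacle: the only nonelementary ingredient is Deligne's theorem that the open--closed long exact sequence in compactly supported cohomology is an exact sequence of mixed Hodge structures, after which everything is a routine induction.
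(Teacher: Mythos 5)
Your proposal is correct and follows essentially the same strategy as the paper: a descending induction on $i$ driven by the compactly supported long exact sequence of the open--closed pair $(X_{i+1},Z_{i+1})$ inside $Z_i$, with vanishing supplied by the affine-space structure of the strata from Corollary~\ref{cor_filtration}. The paper phrases the final purity step directly in terms of strictness of the maps in the short exact sequence, whereas you additionally spell out that an extension of pure Hodge--Tate structures of the same weight is again pure Hodge--Tate; this is the same argument, just made explicit.
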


\begin{proof}
We prove the assertions in (a) and (b) by descending induction on $i$. For $i=r$, the assertion in (a) is trivial.
We now assume that $(a)$ holds for $i+1$ and show that both (a) and (b) hold for $i$. Consider the long exact sequence
for the cohomology with compact support
$$H^{q-1}_c(Z_{i+1},\rat)\to H^{q}_c(Z_i\smallsetminus Z_{i+1},\rat)\to H^{q}_c(Z_i,\rat)\to H^q_c(Z_{i+1},\rat)\to H^{q+1}_c(Z_i\smallsetminus Z_{i+1},\rat).$$
The key point is that by Corollary~\ref{cor_filtration}, $Z_i\smallsetminus Z_{i+1}$ is isomorphic to an affine space, hence
$H_c^j(Z_i\smallsetminus Z_{i+1},\rat)\simeq \rat$ if $j=2\cdot\dim(Z_i\smallsetminus Z_{i+1})$ and $H_c^j(Z_i\smallsetminus Z_{i+1},\rat)=0$, otherwise. 
The above exact sequence implies that $H^q_c(Z_i,\rat)=0$ if $q$ is odd and, 
using also the fact that (a) holds for $i+1$, we see that
the sequence in (b) is exact when $q$ is even. 

Note that the maps in the exact sequence in (b) are maps of mixed Hodge structures. In particular, they are strict
with respect to both the weight and Hodge filtrations. We thus obtain by descending induction on $i$ that the mixed Hodge structure on $H^q_c(Z_i,\rat)$ is pure, 
of weight $q$, and of Hodge-Tate type (recall that $H^{2d}_c({\mathbf A}^{\!d})$
is pure, of weight $2d$, and of Hodge-Tate type). By taking $i=0$, we obtain the last assertion in the lemma.
\end{proof}

\begin{proof}[Proof of Theorem~\ref{pure_coh_global}]
Note first that since $X$ is simplicial, Poincar\'{e} duality implies that we have an isomorphism of mixed Hodge structures
$$H^q(X,\rat)\simeq H_c^{2d-q}(X,\rat)^*\otimes {\mathbf Q}(-d),$$
where $d=\dim(X)$. This shows that the assertions about $H^*(X,\rat)$ and $H^*_c(X,\rat)$ are equivalent. 
Lemma~\ref{lem_surj_coh} implies that the assertions about $H^*_c(X,\rat)$  hold if $X$ is smooth and quasi-projective, hence we are done in this case.

In the general case, it is enough to show that the assertions about $H^*(X,\rat)$ hold.
We use Chow's lemma and toric resolution of singularities (see Remark~\ref{rmk_toric_resolution}) to get a projective, birational toric map $g\colon \widetilde{X}\to X$
such that $\widetilde{X}$ is smooth and quasi-projective. 
Since the canonical map $g^*\colon H^q(X,\rat)\to H^q(\widetilde{X},\rat)$ is a morphism of mixed Hodge structures (hence strict with respect to both the weight and Hodge filtrations) and since we know that 
$H^q(\widetilde{X},\rat)$ is 
pure, of weight $q$, and of Hodge-Tate type,
it is enough to show that $g^*$ is injective.
 
 In turn, injectivity follows easily from  Poincar\'e duality on the 
$\mathbf Q$-manifold $X$, coupled with the projection formula \ci{iversen}, IX.3.7
($\eta \in H^*(X, \rat)$ and the equality holds in the Borel-Moore homology  of $X$)
\[
g_* \left( \{\widetilde{X}\} \cap g^* \eta \right) =  \left( \{X\} \cap \eta \right).
\]
This completes the proof of the theorem.  We remark that the injectivity of $g^*$
 also follows from the easy-to-prove fact that, in our situation, ${\mathbf Q}_X$
 is a direct summand of $Rg_* \left({\mathbf Q}_{\widetilde{X}} \right).$
 \end{proof}


\begin{rmk}\la{mhs_intcoh}
 The conclusions of Theorem \ref{pure_coh_global} hold for $X$ not necessarily simplicial,
provided we replace cohomology (with compact supports)
 with intersection cohomology (with compact supports). This can be seen  by 
 applying the theorem to a toric resolution of $X$ and by 
 observing that the intersection
cohomology of $X$ is a natural subquotient of the cohomology
of any resolution (see, for example, \ci[Thm. 4.3.1]{decjag}).
\end{rmk}

\subsection*{Betti numbers of simplicial toric varieties with convex, pure-dimensional fan support}
We are now ready to compute the Betti numbers of simplicial toric varieties that admit a proper map to an affine toric variety, of contractible type.
This makes use, as in the complete case (see \cite[Chapter~4.5]{Ful93}), of the Hodge-Deligne polynomial that we now briefly review.

Given a complex algebraic variety $X$ of dimension $n$, one considers the mixed Hodge structure on the groups $H_c^i(X,{\mathbf Q})$.
For every $i$ and $m$, the $m^{\rm th}$ graded piece ${\rm gr}^W_{m}H^i_{c}(X,{\mathbf Q})$ with respect to the weight filtration  is a pure Hodge structure of weight $m$. Therefore the Hodge numbers
$h^{p,q}({\rm gr}^W_{m}H^i_{c}(X,{\mathbf Q}))$ are defined whenever $p+q=m$. The Hodge-Deligne polynomial of $X$ is given by
$$E(X; u,v)=\sum_{p,q\geq 0}e_{p,q}u^pv^q\in{\mathbf Z}[u,v],\,\,\text{where}\,\,e_{p,q}=\sum_{i=0}^{2n} (-1)^ih^{p,q}({\rm gr}^W_{p+q}H^i_{c}(X,{\mathbf Q})).$$

In fact, the Hodge-Deligne polynomial is uniquely characterized by the following two properties:
\begin{enumerate}
\item[a)] If $X$ is smooth and projective, then $E(X; u,v)$ is the usual Hodge polynomial of $X$.
\item[b)] If $Y$ is a closed subset of $X$ and $U=X\smallsetminus Y$, then
$$E(X;u,v)=E(Y;u,v)+E(U;u,v).$$
\end{enumerate}
The Hodge-Deligne polynomial is also multiplicative, that is,
if $X$ and $Y$ are complex algebraic varieties, then 
$$E(X\times Y;u,v)=E(X;u,v)\cdot E(Y;u,v).$$

Due to the additivity property in b) above, it is easy to compute the Hodge-Deligne polynomial of varieties that can be written as disjoint unions
of simple varieties, such as affine spaces or tori. Note that by property a) above, we have $E({\mathbf P}^1; u,v)=uv+1$, hence property b)
implies $E({\mathbf A}^{\!1}; u,v)=uv$ and $E({\mathbf C}^*; u,v)=uv-1$. Using the fact that the Hodge-Deligne polynomial is multiplicative, one obtains
$E({\mathbf A}^{\!n};u,v)=(uv)^n$ and $E(({\mathbf C}^*)^n;u,v)=(uv-1)^n$. 

If $X$ is smooth and projective, of dimension $n$, then  
$$E(X; t,t)=\sum_{i=0}^{2n}(-1)^ib_i(X)t^i,\,\,\,\text{where}\,\,\,b_i(X)=\dim_{{\mathbf Q}}H^i(X,{\mathbf Q}),$$
that is $E(X;t,t)=P_X(-t)$ where $P_X(t)$ is the Poincar\'{e} polynomial. 
This is a consequence of a) and of the Hodge decomposition. 
However, for an arbitrary $X$, we can't recover $\dim_{\mathbf Q}H^i_c(X,{\mathbf Q})$ from the Hodge-Deligne polynomial.
One case when this can be done is when we know that the mixed Hodge structure on each $H^i_c(X,{\mathbf Q})$ is pure of weight $i$. In this case, it follows from the definition
that 
$$E(X;u,v)=\sum_{p,q\geq 0}(-1)^{p+q}h^{p,q}(H^{p+q}_c(X,{\mathbf Q}))u^pv^q$$
and therefore that
$$E(X; t,t)=\sum_{i=0}^{2n}(-1)^i{\rm dim}_{{\mathbf Q}}H^i_c(X,{\mathbf Q})t^i.$$
\begin{rmk}\label{HodgeDeligneforsheaves}
The formalism of the Hodge-Deligne polynomial extends to constructible complexes $K^\bullet$ of sheaves with the property that the stalks of their cohomology sheaves carry a Mixed Hodge structure, or just a weight filtration. In the proof of Theorem~\ref{thm_p_sigma} below we will use this with
$K^\bullet$ the restriction of the intersection complex $IC_X$ of a toric variety $X$ to a union of torus orbits.
\end{rmk}

Let $X$ be an $n$-dimensional toric variety. Let us denote by $d_{\ell}(X)$ the number of cones in $\Delta_X$ of codimension $\ell$.
In other words, $(d_n(X),\ldots,d_0(X))$ is the $f$-vector of $X$ (see \cite{Stanley1, Stanley2}) (the reason we label the entries differently from the usual way is 
for convenience in the relative setting).
Since a toric variety $X$ is the disjoint union of its orbits, we obtain the following well-known formula
\begin{equation}\label{eq_Hodge_Deligne_general}
E(X;u,v)=\sum_{\sigma\in\Delta_X}(uv-1)^{\rm codim(\sigma)}=\sum_{i=0}^nd_i(X)\cdot (uv-1)^i.
\end{equation}

\begin{cor}\label{Betti_convex_support}
Let $X$ be a simplicial toric variety of dimension $n$, with convex, full-dimensional fan support. The odd cohomology (ordinary and with compact supports) vanishes and  we have that
$$\dim_{\rat}H^{2m}_c(X,\rat)=\dim_{\rat}H^{2n-2m}(X,\rat)=\sum_{i=m}^n(-1)^{i-m}{{i}\choose{m}}d_i(X).$$
\end{cor}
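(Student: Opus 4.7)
The plan is to combine the purity/Hodge-Tate statement from Theorem~\ref{pure_coh_global}, Poincar\'e duality on the $\rat$-homology manifold $X$, and the orbit decomposition formula (\ref{eq_Hodge_Deligne_general}) for the Hodge-Deligne polynomial. The key observation that makes the proof essentially a bookkeeping exercise is that, once we know $H^q_c(X,\rat)$ is pure of weight $q$ and of Hodge-Tate type, the Hodge-Deligne polynomial $E(X;t,t)$ determines the Betti numbers of compactly supported cohomology completely, by the discussion preceding the corollary.

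First, by Theorem~\ref{pure_coh_global}, for every $q$ the mixed Hodge structure on both $H^q(X,\rat)$ and $H^q_c(X,\rat)$ is pure of weight $q$ and of Hodge-Tate type; in particular, both vanish when $q$ is odd. Next, since $X$ is simplicial it is a rational homology manifold, and Poincar\'e duality yields an isomorphism
\[
H^{2m}_c(X,\rat)\;\simeq\;H^{2n-2m}(X,\rat)^\ast\otimes\rat(-n),
\]
which gives the equality of dimensions $\dim_\rat H^{2m}_c(X,\rat)=\dim_\rat H^{2n-2m}(X,\rat)$.

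It remains to compute $\dim_\rat H^{2m}_c(X,\rat)$. Because the mixed Hodge structure on each $H^i_c(X,\rat)$ is pure of weight $i$, the discussion preceding the corollary gives
\[
E(X;t,t)\;=\;\sum_{i=0}^{2n}(-1)^i\dim_\rat H^i_c(X,\rat)\,t^i\;=\;\sum_{m=0}^{n}\dim_\rat H^{2m}_c(X,\rat)\,t^{2m},
\]
where in the second equality we used the vanishing of odd cohomology. On the other hand, applying (\ref{eq_Hodge_Deligne_general}) and the binomial theorem,
\[
E(X;t,t)\;=\;\sum_{i=0}^n d_i(X)(t^2-1)^i\;=\;\sum_{m=0}^n\left(\sum_{i=m}^n (-1)^{i-m}\binom{i}{m}d_i(X)\right)t^{2m}.
\]
Equating coefficients of $t^{2m}$ in these two expressions produces the desired formula.

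There is no real obstacle here: all three ingredients (purity, Poincar\'e duality for simplicial toric varieties, and the orbit-stratification computation of the Hodge-Deligne polynomial) have been established earlier in the excerpt, so the proof amounts to expanding $(t^2-1)^i$ and matching coefficients. The only point that requires slight care is to invoke purity in order to pass from the alternating sum $\sum(-1)^i\dim H^i_c\,t^i$, which is automatic for any variety, to the actual dimensions of the individual groups.
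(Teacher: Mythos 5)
Your proof is correct and follows the same route as the paper: invoke Theorem~\ref{pure_coh_global} for purity (hence vanishing of odd cohomology), apply Poincar\'e duality for simplicial $X$, and equate coefficients in $E(X;t,t)$ using (\ref{eq_Hodge_Deligne_general}). You have simply written out the binomial expansion that the paper leaves implicit.
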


\begin{proof}
By Theorem~\ref{pure_coh_global}, 
the mixed Hodge structure on each $H^i_c(X,\rat)$ is pure of weight $i$, hence the formula for the dimension of the cohomology with compact support
follows from (\ref{eq_Hodge_Deligne_general}). The formula for the dimension of the usual cohomology then follows by Poincar\'{e} duality.
\end{proof}

\section{Cohomology of fibers of toric maps with simplicial source}

Our goal in this section is to compute the Betti numbers of the fibers of toric fibrations $f\colon X\to Y$, when $X$ is simplicial. As in the previous section, the main ingredient is the following purity result.

\begin{tm}\label{pure_coh}
Let $f\colon X\to Y$ be a proper toric map between toric varieties, with  $X$ simplicial. For every $y\in Y$ and every $q$, 
the mixed Hodge structure on $H^q(f^{-1}(y),\rat)$ is pure, of weight $q$, and of Hodge-Tate type. In particular, 
$H^*(f^{-1}(y),\rat)$ is even.
\end{tm}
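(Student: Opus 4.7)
The plan is to reduce Theorem~\ref{pure_coh} to a setting where the fiber $f^{-1}(y)$ has the same cohomology (as a mixed Hodge structure) as the total space $X$, and then invoke Theorem~\ref{pure_coh_global}.

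First I would reduce to the case where $f$ is a toric fibration: by Proposition~\ref{Stein_fact} together with Remark~\ref{rmk_nonsurj}, any proper toric map factors as $X \overset{g}{\to} Z \overset{h}{\to} Y$ with $g$ a fibration and $h$ finite, so a fiber of $f$ is a finite disjoint union of fibers of $g$, and it is enough to treat $g$. Next, fixing $\tau \in \Delta_Y$ with $y \in O(\tau)$, I apply Lemma~\ref{lm_prod_str}: after choosing splittings, the restriction of $f$ over $U_\tau$ becomes $f_{\tau'} \times \mathrm{Id}_{O(\tau)}$, and $f^{-1}(y) \simeq f_{\tau'}^{-1}(x_{\tau'})$ where $f_{\tau'}$ is a toric fibration onto an affine toric variety $U_{\tau'}$ of contractible type whose fixed point is $x_{\tau'}$. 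Note that the source of $f_{\tau'}$ is a torus-invariant open subset of $X$, hence still simplicial. So I may assume $f \colon X \to Y$ is a toric fibration, $Y$ is affine of contractible type, $y = x_\tau$ is the unique $T_Y$-fixed point, and $X$ is simplicial; by Remark~\ref{canonical_fibration}, $X$ then has convex, full-dimensional fan support.

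In this situation I would construct a topological deformation retract of $X$ onto $f^{-1}(y)$ using a one-parameter subgroup. Pick $v$ in the relative interior of the cone defining $Y$, and lift $v$ to $\widetilde{v} \in N_X$ using the surjectivity of $f_N$ (Proposition~\ref{Stein_fact}(ii)). The associated $\comp^*$-action on $X$ via $\gamma_{\widetilde{v}}$ satisfies: for every $x \in X$, the composition $f \circ \gamma_{\widetilde{v},x}$ extends to $\mathbf{A}^{\!1} \to Y$ sending $0$ to $y$ (as $v$ lies in the interior of the cone of $Y$), and since $f$ is proper the valuative criterion (exactly as in the proof of Proposition~\ref{prop_filtration}(ii)) implies that $\gamma_{\widetilde{v},x}$ itself extends to $\widetilde{\gamma}_{\widetilde{v},x} \colon \mathbf{A}^{\!1} \to X$ with value at $0$ lying in $f^{-1}(y)$. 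This yields a continuous retraction $r \colon X \to f^{-1}(y)$ together with a canonical homotopy $H(x,t) = \gamma_{\widetilde{v}}(t) \cdot x$ from $\mathrm{id}_X$ to $i \circ r$, where $i \colon f^{-1}(y) \hookrightarrow X$. Consequently $i^* \colon H^q(X,\rat) \to H^q(f^{-1}(y),\rat)$ is an isomorphism of $\rat$-vector spaces.

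Since $i$ is a morphism of complex algebraic varieties, $i^*$ is automatically a morphism of mixed Hodge structures, and a bijective morphism of MHS is an isomorphism of MHS. By Theorem~\ref{pure_coh_global}, $H^q(X,\rat)$ is pure of weight $q$ and of Hodge-Tate type, and these properties transfer to $H^q(f^{-1}(y),\rat)$. The main subtlety in this plan is precisely the passage from the topological identification of cohomology (coming from the $\comp^*$-contraction, which is only a continuous, not algebraic, retraction) to the identification as mixed Hodge structures; the trick is that one does not need $r$ itself to be algebraic, because the isomorphism is realized by the algebraic map $i^*$ in the opposite direction.
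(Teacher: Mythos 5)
Your reduction steps (Stein factorization plus Remark~\ref{rmk_nonsurj} to pass to a fibration, Lemma~\ref{lm_prod_str} to pass to $Y$ affine of contractible type with $y=x_\tau$ the fixed point, then transferring purity from $H^q(X,\rat)$ via the restriction $i^*$, which is automatically a morphism of mixed Hodge structures) coincide with the paper's. The gap is in how you prove that $i^*\colon H^q(X,\rat)\to H^q(f^{-1}(y),\rat)$ is bijective. The map $H(x,t)=\gamma_{\widetilde v}(t)\cdot x$ does \emph{not} extend to a continuous map $X\times \mathbf{A}^{\!1}\to X$, and the pointwise limit $r(x)=\widetilde\gamma_{\widetilde v,x}(0)$ is in general neither continuous nor a retraction onto $f^{-1}(y)$. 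The action map $\comp^*\times U_\sigma\to U_\sigma$ extends over $t=0$ only when $\widetilde v\in\sigma$; since $\widetilde v$ lies in only some of the maximal cones of $\Delta_X$, the extension exists chart by chart but not globally. This is exactly the Bia{\l}ynicki-Birula phenomenon: the limits exist orbit by orbit but the limit map is only continuous on the locally closed strata $X_i$ of Proposition~\ref{prop_filtration}, not on $X$. Concretely, take $X\to Y=\mathbf{A}^{\!2}$ the blow-up at the origin and $\widetilde v=(1,2)$: the limit of a general point of the exceptional curve $E=f^{-1}(y)$ is the torus-fixed point $[1:0]\in E$, while $[0:1]$ is fixed; so $r|_{E}\neq\mathrm{id}_E$ (hence $r$ is not a retraction at all), and $r^{-1}([1:0])$ is not closed, so $r$ is not continuous. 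The same example already shows that $\gamma_{\widetilde v}$ acts nontrivially on $f^{-1}(y)$, so no choice of lift $\widetilde v$ of an interior $v$ can make your $H$ a homotopy rel $f^{-1}(y)$.

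The paper avoids this by never flowing on $X$: it applies the contraction argument on the \emph{base} $Y=U_\sigma$, where $v\in{\rm Int}(\sigma)$ guarantees that the action does extend to $h\colon\mathbf{A}^{\!1}\times Y\to Y$ with $h^{-1}(y)=(\mathbf{A}^{\!1}\times\{y\})\cup(\{0\}\times Y)$. Lemma~\ref{retraction_lemma} then shows that for any $\comp^*$-equivariant complex ${\mathcal E}$ on $Y$ the map $H^*(Y,{\mathcal E})\to H^*({\mathcal E}_y)$ is an isomorphism, and taking ${\mathcal E}=Rf_*(\rat_X)$ together with proper base change (Remark~\ref{rmk_retraction_lemma}) gives $H^i(X,\rat)\simeq H^i(f^{-1}(y),\rat)$. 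To repair your argument you should either quote that lemma or replace your homotopy by such a sheaf-theoretic (or gradient-flow/moment-map) argument; the naive $\comp^*$-limit does not supply the deformation retraction you assert.
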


We will deduce the theorem from Theorem~\ref{pure_coh_global}, by showing that if $f\colon X\to Y$ is a toric fibration to an affine toric variety of contractible type,
then the restriction map in cohomology from $X$ to the fiber over the torus-fixed point of $Y$ is an isomorphism. In what follows, we prove this in a slightly
more general setting that we will need in the next section. 

Suppose that $X$ is a complex algebraic variety, considered with the analytic topology. 
Let $G$ be an algebraic group acting
on $X$ and let  ${\rm act}\colon G\times X\to X$ be  the corresponding morphism. By an \emph{equivariant complex of sheaves\footnote{The sheaves are assumed to be sheaves of $\rat$-vector spaces, with respect to the analytic topology.}} on $X$
we mean a complex of sheaves ${\mathcal E}$ on $X$ with the property that there is an isomorphism\footnote{The usual definition also requires the isomorphism to satisfy a cocycle condition. However, we do not need this extra condition.} ${\rm act}^*({\mathcal E})\simeq {\rm pr}_2^*({\mathcal E})$ of complexes on $G\times X$. 
With  slight abuse of language, we say that an object in the derived category of sheaves is equivariant if it can be represented by an equivariant complex.
An important example is that of the constant sheaf $\rat_X$.
It is easy to see that if $f\colon X\to Y$ is a $G$-equivariant morphism and
${\mathcal E}$ is equivariant on $X$, then $Rf_*({\mathcal E})$ is equivariant  on $Y$.

The following lemma was proved in \cite[Lemma~6.5]{DL} in the case of the intersection complex. The general case follows
in the same way, but we reproduce the argument for the benefit of the reader.

\begin{lm}\label{retraction_lemma}
Let $Y=U_{\sigma}$ be an affine toric variety with fixed point $y$. If $v\in N_Y\cap {\rm Int}(\sigma)$ 
and we consider the action of ${\mathbf C}^*$ on $Y$ induced by the 1-parameter subgroup $\gamma_v$, then
for every ${\mathbf C}^*$-equivariant complex ${\mathcal E}$ on $Y$, the natural graded map 
$H^*(Y,{\mathcal E})\to H^*({\mathcal E}_y)$ is an isomorphism.
\end{lm}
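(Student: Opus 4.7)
Proof plan.

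The strategy is to exploit the fact that $v\in\operatorname{Int}(\sigma)$ makes the $\mathbf{C}^*$-action a contraction of $Y$ onto the fixed point $y$, and to combine this contraction with the equivariance of $\mathcal{E}$ to identify $R\Gamma(Y,\mathcal{E})$ with the stalk $\mathcal{E}_y$. First I would extend the action: since $\langle u,v\rangle>0$ for every nonzero $u\in\sigma^\vee\cap M_Y$, the action $\operatorname{act}\colon\mathbf{C}^*\times Y\to Y$ extends to a morphism $\mu\colon\mathbf{A}^1\times Y\to Y$ with $\mu(1,x)=x$ and $\mu(0,x)=y$ for every $x\in Y$. In particular, $y$ is the unique $\mathbf{C}^*$-fixed point of $Y$, and $Y$ is contractible in the analytic topology.

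Next, I would introduce the complex $K:=R\pi_*\mu^*\mathcal{E}$ on $\mathbf{A}^1$, where $\pi=\operatorname{pr}_1\colon\mathbf{A}^1\times Y\to\mathbf{A}^1$, and argue that $K$ is cohomologically constant with stalk $R\Gamma(Y,\mathcal{E})$ at $t=1$ and stalk $\mathcal{E}_y$ at $t=0$. For the restriction $K|_{\mathbf{C}^*}$, the equivariance isomorphism $\operatorname{act}^*\mathcal{E}\simeq\operatorname{pr}_2^*\mathcal{E}$ on $\mathbf{C}^*\times Y$ identifies $\mu^*\mathcal{E}|_{\mathbf{C}^*\times Y}$ with $\operatorname{pr}_2^*\mathcal{E}$; a K\"{u}nneth argument for the trivial family $\mathbf{C}^*\times Y\to\mathbf{C}^*$ then gives that $K|_{\mathbf{C}^*}$ is the constant complex with value $R\Gamma(Y,\mathcal{E})$, so $K_1\simeq R\Gamma(Y,\mathcal{E})$.

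For the stalk at $0$, I observe that $\mu\circ i_0=c_y$ (the constant map at $y$), so $\mu^*\mathcal{E}|_{\{0\}\times Y}$ is the constant complex $\underline{\mathcal{E}_y}$ on $Y$; combined with the contractibility of $Y$ this yields $R\Gamma(\{0\}\times Y,\mu^*\mathcal{E}|_{\{0\}\times Y})=\mathcal{E}_y$. A careful computation of $R\Gamma(D\times Y,\mu^*\mathcal{E})$ for shrinking analytic disks $D\ni 0$—using the flow $\mu$ to move a cofinal system of $\mathbf{C}^*$-invariant neighborhoods of $y$ back to a fixed neighborhood, and using the equivariance to transport sections—then identifies $K_0$ with $\mathcal{E}_y$. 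Since $\mathbf{A}^1$ is contractible and $K$ is constant on each of the two strata $\mathbf{C}^*$ and $\{0\}$, the restriction maps $R\Gamma(\mathbf{A}^1,K)\to K_1$ and $R\Gamma(\mathbf{A}^1,K)\to K_0$ are both quasi-isomorphisms, yielding the desired isomorphism $R\Gamma(Y,\mathcal{E})\simeq\mathcal{E}_y$; a diagram chase verifies that this coincides with the natural restriction-to-stalk map.

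The principal technical obstacle is the identification $K_0\simeq\mathcal{E}_y$: because $\pi$ is not proper, one cannot apply proper base change directly, and one must exploit the specific structure of the contracting flow together with equivariance to carry out the limit over shrinking disks around $0$, in the spirit of the proof of \cite[Lemma~6.5]{DL} for the intersection complex.
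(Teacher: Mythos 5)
Your overall strategy (contract $Y$ to $y$ via $\mu\colon{\mathbf A}^1\times Y\to Y$ and use equivariance to compare the two ends of the family over ${\mathbf A}^1$) is in the right spirit, but as written the argument has two genuine gaps, and the second one is fatal to the logic.

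First, the step you flag as the "principal technical obstacle" --- the identification $K_0\simeq{\mathcal E}_y$ for $K=R\pi_*\mu^*{\mathcal E}$ --- is not actually carried out, and the sketched remedy does not obviously work: for every disk $D\ni 0$ one has $\mu(D\times Y)=Y$ (since $\mu(t,\cdot)$ is an automorphism of $Y$ for $t\neq 0$), so shrinking $D$ does not localize anything near $y$, and without properness of $\pi$ there is no base change to fall back on. Second, and more seriously, the concluding step is false as a general principle: a complex on ${\mathbf A}^1$ that is cohomologically constant on each of the strata $\{0\}$ and ${\mathbf C}^*$ need not have $R\Gamma({\mathbf A}^1,K)\to K_1$ an isomorphism. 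The complex $j''_!\rat_{{\mathbf C}^*}$ (extension by zero of the constant sheaf) is constant on both strata, yet $H^*({\mathbf A}^1,j''_!\rat_{{\mathbf C}^*})=0$ while the stalk at $1$ is $\rat$. What you would need is that $K$ is cohomologically locally constant across the stratification, i.e.\ that the (co)specialization map $K_0\to K_1$ is an isomorphism --- but that is precisely the content of the lemma, so the argument is circular at this point.

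For comparison, the paper's proof sidesteps both issues. It first reformulates the statement as the vanishing $H^*(Y,j_!{\mathcal E})=0$ with $j\colon Y\smallsetminus\{y\}\hookrightarrow Y$, then pulls back along the contraction $h\colon{\mathbf A}^1\times Y\to Y$; since $h(1,\cdot)={\rm id}_Y$, the pullback map on cohomology is (split) injective, so it suffices to show $H^*({\mathbf A}^1\times Y,h^*j_!{\mathcal E})=0$. The key observation is that $h^{-1}(Y\smallsetminus\{y\})={\mathbf C}^*\times(Y\smallsetminus\{y\})$, and equivariance identifies $h^*j_!{\mathcal E}$ with the external product $j''_!\rat_{{\mathbf C}^*}\boxtimes j_!{\mathcal E}$; the vanishing then follows from $H^*({\mathbf A}^1,j''_!\rat_{{\mathbf C}^*})=0$ and K\"unneth. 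Note that the same object $j''_!\rat_{{\mathbf C}^*}$ that breaks your final step is exactly what makes the paper's computation work. I would encourage you to redo the argument along these lines, or else to supply a genuine proof that $K$ is cohomologically constant on all of ${\mathbf A}^1$ (not merely stratumwise).
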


\begin{proof}
The assertion in the lemma is equivalent to the fact that $H^*(Y, j_!({\mathcal E}))=0$, where $j\colon Y_0=Y\smallsetminus\{y\}\hookrightarrow Y$ is the inclusion.
The hypothesis on $v$ implies that the map ${\mathbf C}^*\times Y\to Y$ given by $(t,x)\to\gamma_v(t)\cdot x$ extends 
to a map $h\colon {\mathbf A}^{\!1}\times Y\to Y$ such that 
$$h^{-1}(y)=({\mathbf A}^{\!1}\times\{y\})\cup (\{0\}\times Y).$$
Consider the morphism $g\colon Y\to {\mathbf A}^{\!1}\times Y$ given by $g(x)=(1,x)$, so that $h\circ g$ is an isomorphism. 
Therefore the composition
$$H^*(Y,j_!({\mathcal E}))\to H^*({\mathbf A}^{\!1}\times Y,h^*j_!({\mathcal E}))\to H^*(Y, g^*h^*j_!({\mathcal E}))$$
is an isomorphism, hence it is enough to prove that $H^*({\mathbf A}^{\!1}\times Y,h^*j_!({\mathcal E}))=0$. 
On the other hand, we have $h^{-1}(Y_0)={\mathbf C}^*\times Y_0$ and it is easy to see that
$h^*j_!({\mathcal E})\simeq j'_!h_0^*({\mathcal E})$, where $j'\colon {\mathbf C}^*\times Y_0\hookrightarrow {\mathbf A}^1\times Y$ is the inclusion and
$h_0\colon {\mathbf C}^*\times Y_0\to Y_0$ is induced by $h$. Since ${\mathcal E}$ is equivariant with respect to the ${\mathbf C}^*$-action,
it follows that we have an isomorphism $h_0^*({\mathcal E})\simeq {\rm pr}_2^*({\mathcal E})$, hence 
$$j'_!h_0^*({\mathcal E})\simeq j''_!(\rat_{{\mathbf C}^*}\boxtimes j_!({\mathcal E})),$$
where $j''\colon {\mathbf C}^*\hookrightarrow {\mathbf C}$ is the inclusion. Since $H^*({\mathbf A}^{\!1}, j''_!({\mathbf Q}_{{\mathbf C}_*}))=0$, the K\"{u}nneth formula
implies $H^*({\mathbf A}^{\!1}\times Y, h^*j_!({\mathcal E}))=0$.
\end{proof}

\begin{rmk}\label{rmk_retraction_lemma}
Suppose that $f\colon X\to Y$ is a proper surjective toric map, where $Y$ is as in Lemma~\ref{retraction_lemma}. If ${\mathcal E}$ is a $T_X$-equivariant complex on $X$,
then we may choose 
$v$ as in the lemma such that $v$ lies in the image of $f_N$. Therefore we get actions of ${\mathbf C}^*$ on $X$ and $Y$ such that $f$ is ${\mathbf C}^*$-equivariant.
In this case we may apply the lemma to the complex  $Rf_*({\mathcal E})$ and conclude that we have canonical isomorphisms
$$H^i(X,{\mathcal E})\simeq H^i(Y, Rf_*({\mathcal E}))\simeq H^i(Rf_*({\mathcal E})_y)\simeq H^i(f^{-1}(y),{\mathcal E}),$$
where the last isomorphism follows from proper base-change. 
\end{rmk}

We include the next remark for future reference.

\begin{rmk}\label{rmk2_retraction_lemma}
In the setting of Lemma~\ref{retraction_lemma}, we may take ${\mathcal E}={\mathcal I}_Y:=IC_Y[-\dim(Y)]$, where $IC_Y$
is the intersection cohomology complex on $Y$ to conclude
$$IH^i(Y,\rat)\simeq {\mathcal H}^i({\mathcal I}_Y)_y.$$
If $f\colon X\to Y$ is a proper, surjective, toric map, then we may take ${\mathcal E}={\mathcal I}_X$ to conclude
$$IH^i(X,\rat)\simeq H^i(f^{-1}(y),{\mathcal I}_X).$$
\end{rmk}

We can now prove the main result of this section.

\begin{proof}[Proof of Theorem~\ref{pure_coh}]
We first note that we may assume that $f$ is a fibration, $Y$ is affine and of contractible type, and $y\in Y$ is the fixed point.
Indeed, it follows from Proposition~\ref{Stein_fact} and Remark~\ref{rmk_nonsurj} that we may write $f$ as a composition $X\overset{g}\to Z\overset{h}\to Y$,
with $g$ a fibration and $h$ finite. Since every fiber of $f$ is a disjoint union of fibers of $g$, it follows that after replacing $f$ by $g$, we may assume that 
$f$ is a fibration. Suppose now that $\tau\in\Delta_Y$ is such that $y\in O(\tau)$. After replacing $f$ by $f^{-1}(U_{\tau})\to U_{\tau}$, we may assume that $Y=U_{\tau}$. 
Furthermore, by Lemma~\ref{lm_prod_str}, we may assume that the linear span of $\tau$ is the ambient space and that $y=x_{\tau}$. 

In this case we may apply Lemma~\ref{retraction_lemma} to $Rf_*(\rat_X)$ (see Remark~\ref{rmk_retraction_lemma}) to conclude 
that the canonical restriction map
$$H^i(X,\rat)\to H^i(f^{-1}(y),\rat)$$
is an isomorphism. Since this is a morphism of mixed Hodge structures, the assertions in the theorem follow from those in 
Theorem~\ref{pure_coh_global}.
\end{proof}

As in the case of toric varieties, 
it is easy to compute the Hodge-Deligne polynomial of fibers of toric fibrations.
Given a toric fibration $f\colon X\to Y$ 
and a cone $\tau\in\Delta_Y$, we put
$$d_{\ell}(X/\tau)=\#\{\sigma\in\Delta_X\mid f_*(\sigma)=\tau,{\rm codim}(\sigma)-{\rm codim}(\tau)=\ell\}.$$

\begin{rmk}
Note that when $Y$ is a point, then $d_{\ell}(X/\{0\})$ is the same as $d_{\ell}(X)$, as introduced in the previous section. Therefore,
if  the relative dimension of $f$ is $r=\dim(X)-\dim(Y)$, then the vector $(d_r(X/\tau),\ldots,d_0(X/\tau))$
is a relative version of the $f$-vector of a toric variety.
\end{rmk}

\begin{pr}\label{HD_fiber}
Let $X$, $Y$ be toric varieties and let  $f\colon X\to Y$ be  a toric fibration. If $\tau$ is a cone in $\Delta_Y$ and $y\in O(\tau)$, then
$$E(f^{-1}(y); u,v)=\sum_{\ell\geq 0} d_{\ell}(X/\tau)\cdot (uv-1)^{\ell}.$$
In particular, we have
$\chi(f^{-1}(y))=d_0(X/\tau)$.
\end{pr}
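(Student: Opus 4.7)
The proof should be quite direct given the structural results already available. My plan is to compute $E(f^{-1}(y); u, v)$ by slicing the fiber into torus orbits and using additivity of the Hodge-Deligne polynomial.

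First, I would reduce to the case $y = x_\tau$ using Lemma~\ref{lm_prod_str}(ii), which provides an isomorphism $f^{-1}(y) \simeq f^{-1}(x_\tau)$ for every $y \in O(\tau)$. Thus the Hodge-Deligne polynomial of $f^{-1}(y)$ depends only on $\tau$, and I may assume $y = x_\tau$.

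Next, I would invoke Proposition~\ref{str_fibers}(ii), which tells us that $f^{-1}(x_\tau)$ is a disjoint union of locally closed subsets, parametrized by those $\sigma \in \Delta_X$ with $f_*(\sigma) = \tau$, and the subset indexed by $\sigma$ is isomorphic to the torus $O(\sigma/\tau)$. The dimension of this torus is $\operatorname{codim}(\sigma) - \operatorname{codim}(\tau)$, so grouping the cones $\sigma$ with $f_*(\sigma)=\tau$ according to the value $\ell = \operatorname{codim}(\sigma) - \operatorname{codim}(\tau)$, there are precisely $d_\ell(X/\tau)$ strata giving a torus of dimension $\ell$.

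By the additivity property (b) of the Hodge-Deligne polynomial applied repeatedly to the stratification above, together with the computation $E((\mathbf{C}^*)^\ell; u, v) = (uv-1)^\ell$ recalled in the paper, I obtain
\[
E(f^{-1}(y); u, v) \;=\; \sum_{\sigma\in\Delta_X,\,f_*(\sigma)=\tau} (uv-1)^{\operatorname{codim}(\sigma)-\operatorname{codim}(\tau)} \;=\; \sum_{\ell \geq 0} d_\ell(X/\tau)\,(uv-1)^\ell,
\]
which is the claimed formula. Finally, the Euler characteristic $\chi(f^{-1}(y))$ is obtained by setting $u = v = 1$: since $(uv-1)^\ell$ vanishes there for $\ell \geq 1$ and equals $1$ for $\ell = 0$, only the $\ell = 0$ term survives, giving $\chi(f^{-1}(y)) = d_0(X/\tau)$. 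There is no real obstacle here — the entire argument is a bookkeeping exercise once the stratification of Proposition~\ref{str_fibers}(ii) is in hand; the only minor subtlety is remembering to pass from an arbitrary $y \in O(\tau)$ to $x_\tau$ via Lemma~\ref{lm_prod_str}(ii) so that the description of the fiber as a disjoint union of tori is available.
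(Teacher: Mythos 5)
Your proof is correct and follows exactly the paper's argument: stratify the fiber into tori via Proposition~\ref{str_fibers}(ii), apply additivity and multiplicativity of the Hodge--Deligne polynomial, and evaluate at $u=v=1$ for the Euler characteristic. The only cosmetic difference is that you explicitly reduce to $y=x_\tau$ first, whereas the paper applies Proposition~\ref{str_fibers}(ii) directly to any $y\in O(\tau)$ (that reduction already being built into its proof).
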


\begin{proof}
Recall that by Proposition~\ref{str_fibers}, we can write $f^{-1}(y)$ as a disjoint union of locally closed subsets, with the subsets in one-to-one correspondence
with the cones $\sigma\in\Delta_X$ that satisfy $f_*(\sigma)=\tau$. Moreover, the subset corresponding to $\sigma$ is isomorphic to $({\mathbf C}^*)^{{\rm codim}(\sigma)-{\rm codim}(\tau)}$.
By additivity of the Hodge-Deligne polynomial, we thus get
$$E(f^{-1}(y); u,v)=\sum_{f_*(\sigma)=\tau}(uv-1)^{{\rm codim}(\sigma)-{\rm codim}(\tau)}=\sum_{\ell\geq 0}d_{\ell}(X/\tau)\cdot (uv-1)^{\ell}.$$
The last assertion follows from the usual identity $\chi(f^{-1}(y))=E(f^{-1}(y);1,1)$. 
\end{proof}

\begin{cor}\label{Betti_fib}
Let $X$ and $Y$ be toric varieties, with $X$ simplicial, and let $f\colon X\to Y$ be a toric fibration. 
If $\tau$ is a cone in $\Delta_Y$ and $y\in O(\tau)$, then
$$\dim_{\mathbf Q}H^{2m}(f^{-1}(y),{\mathbf Q})=\sum_{\ell\geq m}(-1)^{\ell-m}{{\ell}\choose m}d_{\ell}(X/\tau).$$

\end{cor}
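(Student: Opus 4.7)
The plan is to combine the purity statement of Theorem~\ref{pure_coh} with the explicit Hodge--Deligne polynomial computation from Proposition~\ref{HD_fiber}, and then simply match coefficients. Since $f$ is proper, the fiber $f^{-1}(y)$ is a proper (hence compact) complex algebraic variety, so the natural comparison map $H^q_c(f^{-1}(y),\mathbf{Q}) \to H^q(f^{-1}(y),\mathbf{Q})$ is an isomorphism of mixed Hodge structures for every $q$. This lets us read off the Betti numbers from $E(f^{-1}(y);u,v)$ once we know that all the mixed Hodge structures involved are pure.

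First, I would invoke Theorem~\ref{pure_coh} to conclude that for each $q$, the mixed Hodge structure on $H^q(f^{-1}(y),\mathbf{Q})$ is pure of weight $q$ and of Hodge--Tate type. In particular, $H^q(f^{-1}(y),\mathbf{Q}) = 0$ for $q$ odd, and for $q = 2m$ the only nonzero Hodge number is $h^{m,m}$, which equals $\dim_{\mathbf{Q}} H^{2m}(f^{-1}(y),\mathbf{Q})$. Plugging this into the definition of the Hodge--Deligne polynomial, and using $H^q_c = H^q$ on the proper variety $f^{-1}(y)$, I obtain
$$E(f^{-1}(y); u, v) \;=\; \sum_{m \geq 0} \dim_{\mathbf{Q}} H^{2m}(f^{-1}(y),\mathbf{Q}) \, (uv)^m.$$

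Second, Proposition~\ref{HD_fiber} provides the complementary expression
$$E(f^{-1}(y); u, v) \;=\; \sum_{\ell \geq 0} d_\ell(X/\tau) \, (uv-1)^\ell,$$
which holds without any simpliciality assumption. Setting $t = uv$, both sides become polynomials in $t$; expanding $(t-1)^\ell$ by the binomial theorem and comparing the coefficients of $t^m$ yields
$$\dim_{\mathbf{Q}} H^{2m}(f^{-1}(y),\mathbf{Q}) \;=\; \sum_{\ell \geq m} (-1)^{\ell - m} \binom{\ell}{m} d_\ell(X/\tau),$$
which is the desired formula.

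There is no genuine obstacle here; the whole statement is a formal consequence of the purity provided by Theorem~\ref{pure_coh}, together with Proposition~\ref{HD_fiber}. The only step worth double-checking is the passage from Hodge--Deligne polynomial coefficients to Betti numbers, which requires exactly the Hodge--Tate purity in each degree to eliminate the usual alternating-sign cancellations in the definition of $E(\,\cdot\,;u,v)$.
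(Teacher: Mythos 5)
Your proposal is correct and follows essentially the same route as the paper: invoke the purity and Hodge--Tate statements of Theorem~\ref{pure_coh} (together with compactness of the fiber to identify $H^*_c$ with $H^*$), combine with the Hodge--Deligne polynomial formula of Proposition~\ref{HD_fiber}, and equate coefficients after a binomial expansion. The only cosmetic difference is that you keep both variables $u,v$ before specializing, whereas the paper sets $u=v=t$ immediately; the argument is otherwise identical.
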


\begin{proof}
Note that since $f$ is proper, $f^{-1}(y)$ is compact, hence the usual cohomology agrees with the cohomology 
with compact support. By Theorem~\ref{pure_coh}, the mixed Hodge structure on $H^i(f^{-1}(y),{\mathbf Q})$ is pure, hence
$$E(f^{-1}(y);t,t)=\sum_{i\geq 0}(-1)^ib_i(f^{-1}(y))t^i,\quad\text{where}\quad b_i(f^{-1}(y))=\dim_{\mathbf Q}H^{i}(f^{-1}(y),{\mathbf Q}).$$
On the other hand, it follows from Proposition~\ref{HD_fiber} that 
$$E(f^{-1}(y);t,t)=\sum_{\ell\geq 0}d_{\ell}(X/\tau)(t^2-1)^{\ell}
=\sum_{\ell\geq 0}d_{\ell}(X/\tau)\cdot\sum_{m=0}^{\ell}(-1)^{\ell-m}{{\ell}\choose m}t^{2m}$$
$$=\sum_{m\geq 0}\left(\sum_{\ell\geq m}(-1)^{\ell-m}{{\ell}\choose m}d_{\ell}(X/\tau)\right)t^{2m}.$$
The formula in the corollary now follows by equating the coefficients of the powers of $t$ in the two expressions for
$E(f^{-1}(y);t,t)$. 
\end{proof}

\begin{rmk}
If in Corollary~\ref{Betti_fib} we consider the case when $Y$ is a point, then we recover the familiar formula for the Betti numbers of a simplicial,
complete toric variety ${\rm (}$see \cite[Chapter 4.5]{Ful93}${\rm )}$.
\end{rmk}

\begin{rmk}\label{weights_trick}
In fact, one can prove the assertions in Theorems~\ref{pure_coh_global} and \ref{pure_coh} without making use of the filtration constructed
in the previous section, arguing as follows. We have seen that it is enough to prove that when $f\colon X\to Y$ is a proper toric fibration, with $X$
smooth and $Y$ affine, of contractible type, and with $y\in Y$ being the torus-fixed point, then the mixed Hodge structures on $H^i(X,\rat)$ and $H^i(f^{-1}(y),\rat)$
are pure of weight $i$ (the fact that the Hodge structures are of Hodge-Tate type then follows from the computation of the Hodge-Deligne 
polynomials).
 As we have seen, we have an isomorphism of mixed Hodge structures 
$$H^i(X,\rat)\to H^i(f^{-1}(y),\rat).$$
Since $X$ is smooth, all weights on $H^i(X,\rat)$ are $\geq i$, while since $f^{-1}(y)$ is compact, all weights on 
$H^i(f^{-1}(y),\rat)$ are $\leq i$. We conclude that the mixed Hodge structures on each of $H^i(X,\rat)$ and $H^i(f^{-1}(y),\rat)$ are pure of weight $i$.
\end{rmk}

\section{The Decomposition Theorem for toric maps}\la{sec_dec}

The main result of this section is the following version of the Decomposition Theorem in the case of toric fibrations.
Recall that for an algebraic variety $X$, we denote by $IC_X$ the intersection complex on $X$.

\begin{tm}\label{toric_dec_thm}
If $X$ and $Y$ are complex toric varieties and $f\colon X\to Y$ is a toric fibration, then we have a decomposition
\begin{equation}\label{eq_toric_dec_thm}
Rf_*(IC_X)\simeq \bigoplus_{\tau\in\Delta_Y}\bigoplus_{b\in\zed}IC_{V(\tau)}^{\oplus s_{\tau,b}}[-b].
\end{equation} 
Furthermore, the nonnegative integers $s_{\tau,b}$ satisfy the following conditions:

\begin{enumerate}
\item[i)] $s_{\tau,b}=s_{\tau,-b}$ for every $\tau\in\Delta_Y$ and every $b\in\zed$.
\item[ii)] If $f$ is projective, then
$s_{\tau,b}\geq s_{\tau,b+2\ell}$ for every $\tau\in\Delta_Y$ and every $b, \ell \in\zed_{\geq 0}$.
\item[iii)] $s_{\tau,b}=0$ if $b+\dim(X)-\dim(V(\tau))$ is odd.
\end{enumerate}
\end{tm}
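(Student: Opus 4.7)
The plan is to start from the general Decomposition Theorem of \cite{BBD}: since $f$ is proper and $IC_X$ is pure, we obtain
$$Rf_*(IC_X) \simeq \bigoplus_{\alpha} IC_{Z_\alpha}(L_\alpha)[-b_\alpha]$$
for certain irreducible closed subvarieties $Z_\alpha \subseteq Y$, simple local systems $L_\alpha$ on dense open subsets of $Z_\alpha^{\mathrm{sm}}$, and shifts $b_\alpha \in \zed$. The next step is to exploit equivariance: $IC_X$ is $T_X$-equivariant, $f$ is $T_X$-equivariant (where $T_X$ acts on $Y$ through the surjective homomorphism $T_X \to T_Y$), and hence $Rf_*(IC_X)$ is a $T_Y$-equivariant semisimple complex on $Y$. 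Equivariance forces each support $Z_\alpha$ to be $T_Y$-invariant, hence of the form $V(\tau)$ for some $\tau \in \Delta_Y$. Moreover, each orbit $O(\tau) \subseteq V(\tau)$ is a homogeneous space $T_Y/H$ with $H$ a connected subtorus, and $T_Y$-equivariant local systems on such a quotient correspond to representations of $\pi_0(H) = 0$; hence all such local systems are trivial, and the simple summands that actually occur have the form $IC_{V(\tau)}$. This yields the shape of \eqref{eq_toric_dec_thm} with nonnegative multiplicities $s_{\tau,b}$.

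For the symmetry (i), I would apply Verdier duality. Since $IC_X$ is self-dual and $f$ is proper, $Rf_*(IC_X)$ is Verdier self-dual. Matching the two sides of \eqref{eq_toric_dec_thm} after applying $\bb{D}$, together with self-duality of each $IC_{V(\tau)}$, identifies the $IC_{V(\tau)}^{\oplus s_{\tau,b}}[-b]$-summand with $IC_{V(\tau)}^{\oplus s_{\tau,-b}}[b]$, giving $s_{\tau,b}=s_{\tau,-b}$. For (ii), assuming $f$ projective, I would invoke the Relative Hard Lefschetz theorem from \cite{BBD}: a relatively $f$-ample class $\eta$ yields, via iterated cup product, isomorphisms $\,^{\fr{p}}\!\m{H}^{-\ell}Rf_*IC_X \to \,^{\fr{p}}\!\m{H}^{\ell}Rf_*IC_X$ for every $\ell \geq 0$. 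Extracting the multiplicity of each simple summand $IC_{V(\tau)}$ in these perverse cohomology sheaves and using injectivity on the primitive parts of the associated Lefschetz package gives the inequalities $s_{\tau,b} \geq s_{\tau,b+2\ell}$.

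The most delicate point is the parity statement (iii), which I would deduce by computing stalks and comparing parities. By proper base change,
$$\m{H}^i(Rf_*IC_X)_{x_\sigma} \simeq \icjxf{i}{f^{-1}(x_\sigma)}{IC_X}$$
for $x_\sigma \in O(\sigma)$, and by Lemma~\ref{lm_prod_str} together with Remark~\ref{rmk2_retraction_lemma}, after reducing to an affine toric variety of contractible type these stalks are identified with intersection cohomology groups of toric varieties with convex, full-dimensional fan support. The plan is then to combine two parity inputs: first, stalks of $IC$ on any toric variety satisfy $\m{H}^j(IC_{V(\tau)})_{x_\sigma} = 0$ unless $j + \dim V(\tau)$ is even, a consequence of the results of \cite{Fieseler, DL}; second, Remark~\ref{mhs_intcoh} guarantees that the intersection cohomology of such toric varieties is even, so the left-hand side vanishes unless $i + \dim X$ is even. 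Matching parities of both sides of \eqref{eq_toric_dec_thm} at $x_\sigma$ forces $s_{\tau,b} = 0$ whenever $b + \dim X - \dim V(\tau)$ is odd. The main obstacle here is disentangling the individual multiplicities $s_{\tau,b}$ from the pooled stalk contributions; this I would handle by inducting on the closure order of the cones, peeling off contributions from maximal $\sigma$ first so that at each step only summands indexed by $\tau \subseteq \sigma$ remain to analyze.
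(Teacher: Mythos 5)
Your proposal follows the same overall strategy as the paper: start from the general BBD decomposition, use the toric structure to force the supports to be orbit closures $V(\tau)$ with trivial local systems, deduce i) from Verdier self-duality and ii) from relative Hard Lefschetz, and prove iii) by a parity argument on stalks of $Rf_*(IC_X)$. The paper implements the ``equivariance forces the shape'' step via $\Omega_Y$-constructibility (Lemma~\ref{Omega_constructibility}), which is the concrete avatar of the abstract equivariance you invoke; these are morally the same observation, and the paper's version has the advantage of not requiring any facts about equivariant derived categories or about direct summands of equivariant complexes.

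For iii) your plan is more complicated than it needs to be, and the direction of your proposed induction is also off (stalks at a minimal cone see the fewest summands, not stalks at a maximal cone). In fact no induction is needed at all: take the stalk at $x_\tau$ itself, i.e.\ at the generic point of $V(\tau)$, in cohomological degree $i = b - \dim V(\tau)$. At $x_\tau$ the stalk of $IC_{V(\tau)}$ is $\rat$ in degree $-\dim V(\tau)$ and nothing else, so the summand $IC_{V(\tau)}^{\oplus s_{\tau,b}}[-b]$ contributes a \emph{direct summand} $\rat^{\oplus s_{\tau,b}}$ to $H^i(f^{-1}(x_\tau),IC_X)$ in this single degree; the other $IC_{V(\gamma)}$ with $\gamma \subseteq \tau$ only add further direct summands. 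By Proposition~\ref{even_IC} (which is exactly your combination of Remark~\ref{rmk2_retraction_lemma} and the evenness of $IH^*$ for convex, full-dimensional fan support), $H^i(f^{-1}(x_\tau),IC_X)=0$ whenever $i+\dim X$ is odd, and for $i=b-\dim V(\tau)$ this parity is precisely $b+\dim X - \dim V(\tau)$. Vanishing of a vector space kills every direct summand, so $s_{\tau,b}=0$ immediately. You even noted the right parity identity; you just did not observe that choosing $\sigma=\tau$ already isolates $\rat^{\oplus s_{\tau,b}}$ as a summand, which makes the ``disentangling'' obstacle disappear.
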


With the notation in Theorem~\ref{toric_dec_thm}, for every $\tau\in \Delta_Y$ we put $\delta_{\tau}=\sum_bs_{\tau,b}$. 
It is clear that $\delta_{\tau}$ is a nonnegative integer. The subvariety $V(\tau)$ is a \emph{support} for $f$ if $\delta_{\tau}>0$.
In Sections~\ref{sec_simpl} and \ref{sec_gen} we will give combinatorial descriptions of the invariants $\delta_{\tau}$,
determining in particular the supports of $f$.

\begin{rmk}\label{DT_non_fibration}
Suppose that $X$ and $Y$ are toric varieties and $f\colon X\to Y$ is any proper toric map.
It follows from Proposition~\ref{Stein_fact} and Remark~\ref{rmk_nonsurj} that we can factor $f$ as $X\overset{g}\to Z\overset{h}\to Y$, with
$g$ a toric fibration and $h$ a finite toric map. For every $\tau\in\Delta_Z$, we have an induced finite morphism of algebraic groups $O(\tau)\to O(h_*(\tau))$.
If we denote by $O'(\tau)$ the image of this map, then $O'(\tau)$ is a torus and we have a finite, surjective, \'{e}tale map $h_{\tau}\colon O(\tau)\to O'(\tau)$, which is the quotient
by a finite group ${\rm (}$see Remark~\ref{factorization_maps_tori}${\rm )}$. It follows that if ${\mathcal L}_{\tau}:=(h_{\tau})_*({\mathbf Q}_{O(\tau)})$, then ${\mathcal L}_{\tau}$ is a local system on $O'(\tau)$.
Moreover, the induced map $\overline{h}_\tau:V(\tau)\to\overline{O'(\tau)}$ is finite, hence the direct image by this map is $t$-exact for the middle perversity $t$-structure, and therefore
it preserves intersection complexes with twisted coefficients. This implies that ${\overline{h}_\tau}_*(IC_{V(\tau)})\simeq IC_{\overline{O'(\tau)}}({\mathcal L}_{\tau})$. We thus obtain from Theorem~\ref{toric_dec_thm}
the following decomposition in this general toric setting:
$$Rf_*(IC_X)\simeq \bigoplus_{\tau\in\Delta_Z}\bigoplus_{b\in\zed}IC_{\overline{O'(\tau)}}^{\oplus s_{\tau,b}}({\mathcal L}_{\tau})[-b].$$
Clearly, the supports of $f$ are the images via $h$ of the supports
of the toric fibration $g.$
\end{rmk}

Before giving the proof of Theorem~\ref{toric_dec_thm} we make some preparations. We begin by recalling the following general statement
of the Decomposition Theorem, see \cite[Th\'eor\`eme 6.2.5]{BBD}.

\begin{tm}\label{gen_dec_thm}
Let $X$ and $Y$ be complex algebraic varieties and consider a proper morphism $f\colon X\to Y$.
We have a  finite direct sum decomposition
$$Rf_*(IC_X)\simeq \bigoplus_{\alpha}IC_{\overline{Y_{\alpha}}}(L_{\alpha})[-d_{\alpha}],$$
where each $Y_{\alpha}$ is a smooth, irreducible, locally closed subset of $Y,$ $L_{\alpha}$ is a local system on
$Y_{\alpha}$  and $d_\alpha \in \zed.$
\end{tm}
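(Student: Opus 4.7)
The plan is to derive the stated decomposition in two stages. First, split $Rf_*(IC_X)$ as a direct sum of shifts of its perverse cohomology sheaves; second, decompose each perverse cohomology sheaf as a direct sum of intersection complexes with local system coefficients. Concretely, writing $\mathcal{P}^i := \pc{i}{Rf_*(IC_X)}$, I would aim to establish a non-canonical isomorphism
\[
Rf_*(IC_X) \simeq \bigoplus_{i\in\zed} \mathcal{P}^i[-i].
\]
Once each $\mathcal{P}^i$ is known to be a semisimple perverse sheaf, the classification of simple perverse sheaves on $Y$ (every simple object in the abelian category of perverse sheaves is of the form $IC_{\overline{Z}}(L)$ for some irreducible, smooth, locally closed $Z\subseteq Y$ and some irreducible local system $L$ on $Z$) allows one to regroup the simple summands appearing across all $i$, yielding the statement with the indexing $\alpha$ recording a smooth locally closed stratum $Y_\alpha\subseteq Y$, an irreducible local system $L_\alpha$ on $Y_\alpha$, and a shift $d_\alpha=i$.

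The real content is therefore the splitting and the semisimplicity, and this is also where the main obstacle lies. Following \cite{BBD}, I would first spread $X$, $Y$, and $f$ over a finitely generated $\zed$-subalgebra $R\subseteq\comp$, obtaining a proper morphism $\mathcal{X}\to\mathcal{Y}$ over Spec$\,R$ specializing to $f$. Specializing at a closed point and using Artin-type comparison theorems between the analytic and $\ell$-adic settings (together with proper base change and constructibility), the problem reduces to the analogous statement for a proper morphism of varieties over a finite field $\bb{F}_q$ with $\oql$-coefficients. In that arithmetic setting, $IC_{\mathcal{X}_s}$ underlies a pure perverse sheaf, and Deligne's Weil II shows that $Rf_{s,*}(IC_{\mathcal{X}_s})$ is again pure. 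Gabber's semisimplicity theorem then forces each perverse cohomology sheaf to be geometrically semisimple, while the splitting is extracted from the Deligne criterion, exploiting the relative Hard Lefschetz isomorphism (furnished by any relatively ample line bundle upstairs) together with purity to kill the differentials in the perverse spectral sequence.

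The hard part is clearly this use of positive characteristic and Weil II: the entire apparatus of pure complexes, weight filtrations, and Gabber's semisimplicity ultimately rests on Deligne's proof of the Riemann Hypothesis over finite fields. If one prefers to stay in the complex analytic world, two alternative routes lead to the same conclusion. The first is M. Saito's theory of mixed Hodge modules: $IC_X$ underlies a polarizable pure Hodge module, the proper direct image of such is again pure, and pure Hodge modules are semisimple, yielding both the splitting and the simple decomposition intrinsically over $\comp$. The second is the geometric proof of de Cataldo--Migliorini, which derives the decomposition from classical Hodge theory of projective manifolds by an inductive analysis of the perverse filtration, using only the Hodge--Riemann bilinear relations and relative Hard Lefschetz. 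Any of the three approaches delivers the splitting plus semisimplicity, after which invoking the classification of simple perverse sheaves produces the data $Y_\alpha$, $L_\alpha$, $d_\alpha$ in the statement.
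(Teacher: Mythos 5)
The paper offers no proof of this statement: it is quoted verbatim from [BBD, Th\'eor\`eme 6.2.5] and used as a black box, so your outline of the BBD argument (spreading out over a finitely generated $\mathbb{Z}$-algebra, comparison with the $\ell$-adic setting over a finite field, purity via Weil II, Gabber's semisimplicity, splitting into shifted perverse cohomologies, and the classification of simple perverse sheaves), together with the alternative routes through Saito's mixed Hodge modules or the de Cataldo--Migliorini Hodge-theoretic proof, is exactly the intended content and is consistent with how the paper treats the result. One small correction: for a general \emph{proper} (not necessarily projective) $f$ there need be no relatively ample line bundle, and in BBD the splitting $Rf_*(IC_X)\simeq\bigoplus_i {}^{\mathfrak p}\mathcal{H}^i[-i]$ is deduced from the weight formalism --- a pure complex over $\overline{\mathbb{F}}_q$ decomposes as the direct sum of its shifted perverse cohomology sheaves --- rather than from relative Hard Lefschetz, which is the mechanism of Deligne's splitting criterion in the projective case only.
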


In order to prove Theorem~\ref{toric_dec_thm}, we will need to show that under the assumptions of the theorem,
all varieties $\overline{Y_{\alpha}}$ are torus-invariant and the local systems $L_{\alpha}$ are trivial. 
Properties i) and ii) will then follow from Poincar\'{e} duality and relative Hard Lefschetz. Finally, property iii) will
be a consequence of the following proposition.

\begin{pr}\label{even_IC}
If $f\colon X\to Y$ is a proper toric map, then for every $y\in Y$, we have $H^i(f^{-1}(y),IC_X)=0$ whenever $i+\dim(X)$ is odd.
\end{pr}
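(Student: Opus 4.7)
The strategy is to reduce the statement to a computation of intersection cohomology of a toric variety with convex, full-dimensional fan support, and then invoke Remark~\ref{mhs_intcoh}.

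\textit{First reduction (to a fibration).} Using Proposition~\ref{Stein_fact} and Remark~\ref{rmk_nonsurj}, factor $f$ as $X\overset{g}\to Z\overset{h}\to Y$ with $g$ a toric fibration and $h$ a finite toric map. Since $h$ is finite, $f^{-1}(y)$ is a finite disjoint union $\bigsqcup_{z\in h^{-1}(y)}g^{-1}(z)$, so the cohomology splits as a direct sum, and the parity of $i+\dim(X)$ is unchanged. Hence it suffices to treat the case when $f$ is a toric fibration.

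\textit{Second reduction (to $Y$ affine of contractible type, $y$ the fixed point).} Let $\tau\in\Delta_Y$ be the cone with $y\in O(\tau)$. Replacing $f$ by its restriction over $U_\tau$, and then applying the product decomposition of Lemma~\ref{lm_prod_str}, we obtain an equivariant isomorphism $f^{-1}(U_\tau)\simeq f_{\tau'}^{-1}(U_{\tau'})\times O(\tau)$ compatible with $f$, where $f_{\tau'}\colon X_0\to U_{\tau'}$ is a toric fibration over an affine toric variety of contractible type. Setting $X_0:=f_{\tau'}^{-1}(U_{\tau'})$, the K\"{u}nneth formula for the intersection complex gives $IC_{f^{-1}(U_\tau)}\simeq IC_{X_0}\boxtimes\rat_{O(\tau)}[\dim O(\tau)]$, and $f^{-1}(y)\simeq f_{\tau'}^{-1}(x_{\tau'})$ by Lemma~\ref{lm_prod_str}.ii). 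Consequently
\[
H^i(f^{-1}(y),IC_X)\simeq H^{i+\dim O(\tau)}(f_{\tau'}^{-1}(x_{\tau'}),IC_{X_0}).
\]
Since $\dim(X)=\dim(X_0)+\dim O(\tau)$, the parity of $i+\dim(X)$ matches that of $(i+\dim O(\tau))+\dim(X_0)$, so we may assume from the start that $Y$ is affine of contractible type and $y$ is the torus-fixed point.

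\textit{Identification with intersection cohomology and conclusion.} In this situation, Remark~\ref{rmk2_retraction_lemma} (applied to $\mathcal{I}_X=IC_X[-\dim X]$) yields a canonical isomorphism
\[
IH^{j}(X,\rat)\simeq H^{j-\dim X}(f^{-1}(y),IC_X)
\]
for every $j\in\zed$. Moreover, by the existence of a proper toric map to an affine toric variety of contractible type, $X$ has convex, full-dimensional fan support (Remark~\ref{canonical_fibration}). By Remark~\ref{mhs_intcoh}, the intersection cohomology groups $IH^j(X,\rat)$ carry pure mixed Hodge structures of Hodge-Tate type, and in particular they vanish for $j$ odd. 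Translating back, $H^i(f^{-1}(y),IC_X)=0$ whenever $i+\dim(X)$ is odd, which is exactly the claim.

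\textit{Main obstacle.} The only delicate point is the bookkeeping in the second reduction: one has to check that the intersection complex is compatible with the product structure of Lemma~\ref{lm_prod_str} (up to the shift by $[\dim O(\tau)]$) and that the parity condition transports correctly. Once this is in place, the argument is a direct combination of the retraction lemma and the purity statement already established for intersection cohomology.
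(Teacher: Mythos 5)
Your proof is correct and it takes a genuinely different route from the paper's. The paper's argument stays with the fiber: it chooses a toric resolution $\pi\colon X'\to X$, uses the Decomposition Theorem (Theorem~\ref{gen_dec_thm}) to exhibit $IC_X$ as a direct summand of $R\pi_*(\rat_{X'}[\dim X])$, applies proper base change over $f^{-1}(y)$, and invokes Theorem~\ref{pure_coh} for the smooth source $X'$ to conclude that $H^{i+\dim X}((f\circ\pi)^{-1}(y),\rat)$ vanishes in odd total degree. Your argument instead stays with the total space: after the same fibration reduction and the product reduction via Lemma~\ref{lm_prod_str}, you use the retraction isomorphism $H^{i}(f^{-1}(y),IC_X)\simeq IH^{i+\dim X}(X,\rat)$ from Remark~\ref{rmk2_retraction_lemma}, and then the evenness of intersection cohomology for toric varieties with convex, full-dimensional fan support from Remark~\ref{mhs_intcoh}. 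The two are "dual" reductions to the same purity input: the paper pushes purity to a resolution of $X$ and restricts to the fiber, while you push the fiber back to all of $X$ and use the already-recorded fact that $IH^*(X)$ is even (itself proved in Remark~\ref{mhs_intcoh} by passing to a resolution). Your version is slightly more economical in that it bypasses the explicit Decomposition-Theorem-plus-base-change step, at the cost of the Künneth bookkeeping in the product reduction, which you carry out correctly. Two small remarks: you should note that $IC_X$ is $\mathbf{C}^*$-equivariant for the one-parameter subgroup $\gamma_v$ with $v$ in the image of $f_N$ (as in Remark~\ref{rmk_retraction_lemma}), which is needed to invoke the retraction lemma; and, after the second reduction, the new source variety has convex, full-dimensional fan support by Remark~\ref{canonical_fibration} (you do cite this), so Remark~\ref{mhs_intcoh} applies.
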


\begin{proof}
By Proposition~\ref{Stein_fact} and Remark~\ref{rmk_nonsurj}, we may factor $f$ as a composition $X\overset{g}\to Z\overset{h}\to Y$, with $g$ a toric fibration and $h$ finite.
Since a fiber of $f$ is a disjoint union of fibers of $g$, it follows that we may assume that $f$ is a fibration.
Let $\pi\colon X'\to X$ be a 
proper, birational, toric morphism such that $X'$ is smooth and let $f'=f\circ\pi$. Since $\pi$ is birational,
it follows from 
Theorem~\ref{gen_dec_thm} that 
$IC_X$ is a direct summand of $R\pi_*(IC_{X'})$. By restricting over $f^{-1}(y)$, applying base-change, and taking
the $i^{\rm th}$ cohomology, we conclude that 
$H^i(f^{-1}(y),IC_X)$ is a summand of 
$$H^i(f^{-1}(y), R\pi_*(IC_{X'}))=H^i({f'}^{-1}(y),IC_{X'})=H^{i+n}({f'}^{-1}(y),\rat),$$
where $n=\dim(X)$ and the second equality follows from the fact that $X'$ being smooth, we have $IC_{X'}=\rat_{X'}[n]$.
On the other hand, since $X'$ is smooth and $f\circ\pi$ is a fibration, we may apply Theorem~\ref{pure_coh} to get $H^{i+n}({f'}^{-1}(y),\rat)=0$
when $i+n$ is odd. This completes the proof of the proposition.
\end{proof}

\begin{rmk}\label{mhs_Saito}
It follows from Saito's theory of mixed Hodge modules \cite{Saito} that for every proper morphism $f\colon X\to Y$ of complex algebraic varieties
and every $y\in Y$, the cohomology groups $H^{i-\dim X}(f^{-1}(y),IC_X)$
$=H^{i}(f^{-1}(y),\mathcal{I}_X)$ carry a natural mixed Hodge structure. The argument in the proof of Theorem~\ref{even_IC}
together with Theorem~\ref{pure_coh} imply that if $f$ is a toric map, then this mixed Hodge structure is pure of weight $i$.
\end{rmk}

The fact that in the toric decomposition theorem we only have torus-invariant subvarieties as supports will follow
from the next lemma. Given a toric variety $Y$, we denote by $\Omega_Y$ the stratification by the $T_Y$-orbits.
Recall that a complex of sheaves ${\mathcal E}$ on $Y$ is \emph{$\Omega_Y$-constructible} if for every $i$ and every $O\in \Omega_Y$,
the restriction ${\mathcal H}^i({\mathcal E})\vert_O$ is a local system on $O$.  The intersection complex
of a toric variety $Y$ is $\Omega_Y$-constructible in a strong sense, i.e.  each of these restrictions
is a constant (torus) equivariant sheaf on the orbit (see \ci[Lemma~5.15]{BL}). The same is true for the direct image complex  in (\ref{eq_toric_dec_thm}), and the lemma that follows is a step in proving Theorem \ref{toric_dec_thm}.

\begin{lm}\label{Omega_constructibility}
If $f\colon X\to Y$ is a proper toric fibration, then $Rf_*(IC_X)$ is $\Omega_Y$-constructible. In fact, the
restriction of each ${\mathcal H}^i(Rf_*(IC_X))$ to a $T_Y$-orbit is a constant sheaf.
In particular, $IC_Y$ is $\Omega_Y$-constructible. 
\end{lm}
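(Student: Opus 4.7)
The plan is to fix a cone $\tau\in\Delta_Y$ and show that the restriction of every cohomology sheaf $\mathcal{H}^i(Rf_*(IC_X))$ to the orbit $O(\tau)$ is a constant sheaf; since the orbits $\{O(\tau)\mid\tau\in\Delta_Y\}$ partition $Y$, this will give the strong form of constructibility in the statement. The ``in particular'' assertion about $IC_Y$ will then follow immediately by applying the main claim to the identity map $f=\mathrm{id}_Y$, which is itself a toric fibration in a trivial way. The strategy is to use the local product structure near $O(\tau)$ coming from Lemma \ref{lm_prod_str} to split off $O(\tau)$ as a trivial factor.

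In detail, I would first choose splittings as in Lemma \ref{lm_prod_str}, yielding equivariant isomorphisms $U_\tau\simeq U_{\tau'}\times O(\tau)$ and $f^{-1}(U_\tau)\simeq f^{-1}(U_{\tau'})\times O(\tau)$ under which $f|_{f^{-1}(U_\tau)}$ becomes $f_{\tau'}\times\mathrm{Id}_{O(\tau)}$; here $U_{\tau'}$ is affine of contractible type with torus-fixed point $x_{\tau'}$, and $O(\tau)$ is a smooth torus. Since $f^{-1}(U_\tau)$ is open in the normal (irreducible) variety $X$, one has $IC_X|_{f^{-1}(U_\tau)}\simeq IC_{f^{-1}(U_\tau)}$, and by the standard compatibility of intersection complexes with products having a smooth factor,
\[
IC_{f^{-1}(U_\tau)}\simeq IC_{f^{-1}(U_{\tau'})}\boxtimes \rat_{O(\tau)}[\dim O(\tau)].
\]
Pushing forward and using that proper pushforward commutes with external product on the trivial second factor yields
\[
Rf_*(IC_X)\big|_{U_\tau}\simeq Rf_{\tau'*}\bigl(IC_{f^{-1}(U_{\tau'})}\bigr)\boxtimes \rat_{O(\tau)}[\dim O(\tau)].
\]

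To conclude, I would observe that, under the product decomposition of $U_\tau$, the closed orbit $O(\tau)\subseteq U_\tau$ corresponds to $\{x_{\tau'}\}\times O(\tau)$, since $O(\tau)$ is the unique closed orbit of $U_\tau$ and $x_{\tau'}$ is the unique closed orbit in the contractible-type variety $U_{\tau'}$. Restricting the box product above to $O(\tau)$ therefore gives
\[
Rf_*(IC_X)\big|_{O(\tau)}\simeq \bigl(Rf_{\tau'*}(IC_{f^{-1}(U_{\tau'})})\bigr)_{x_{\tau'}}\otimes_{\rat}\rat_{O(\tau)}[\dim O(\tau)],
\]
which is a constant complex on $O(\tau)$; passing to cohomology sheaves proves the main assertion, and the ``in particular'' clause follows by specializing to $f=\mathrm{id}_Y$. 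The only delicate input will be the standard fact that the intersection complex of a product with a smooth factor is the external box product of the factors' intersection complexes; this is the step that genuinely uses the precise product structure of Lemma \ref{lm_prod_str} and is the point I expect to require the most care to cite or justify cleanly.
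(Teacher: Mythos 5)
Your proof is correct and takes essentially the same approach as the paper: both use the local product decomposition from Lemma~\ref{lm_prod_str} and the fact that for a product with a smooth factor one has $IC_{X'\times O(\tau)}\simeq \mathrm{pr}_1^*(IC_{X'})[\dim O(\tau)]$, which is exactly your external box product formula. The paper simply phrases the conclusion directly at the level of cohomology sheaves, ${\mathcal H}^i(Rf_*(IC_X))|_{O(\tau)}\simeq {\mathcal H}^{i+r}(Rf'_*(IC_{X'}))_y\otimes\rat_{O(\tau)}$, rather than first writing the pushforward as a box product and then restricting, but the two routes are the same argument.
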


\begin{proof} 
Let $\tau\in\Delta_Y$ be fixed.
In order to describe the restriction of some ${\mathcal H}^i(Rf_*(IC_X))$ to the orbit $O(\tau)$, we may restrict to the affine open subset $U_{\tau}$ and thus assume that $Y=U_{\tau}$.
Lemma~\ref{lm_prod_str} implies that we have isomorphisms $Y\simeq Y'\times O(\tau)$ and $X\simeq X'\times O(\tau)$,
with $Y'$ having a fixed point $y$, 
such that $f$ gets identified to $f'\times {\rm Id}_{O(\tau)}$, where $f'\colon X'\to Y'$ is a toric fibration. 
In this case $IC_{X}$ gets identified to ${\rm pr}_1^*(IC_{X'})[r]$, where $r=\dim(O(\tau))$. 
It is then clear that
$${\mathcal H}^i(Rf_*(IC_X))\vert_{O(\tau)}\simeq {\mathcal H}^{i+r}(Rf'_*(IC_{X'}))_y\otimes \rat_{O(\tau)}.$$
This gives the  first two assertions in the lemma. The third is the special case
$f={\rm Id}.$
\end{proof}

We can now prove the main result of this section.

\begin{proof}[Proof of Theorem~\ref{toric_dec_thm}]
Given a toric fibration $f\colon X\to Y$, we obtain via Theorem~\ref{gen_dec_thm} a finite direct sum decomposition
$$Rf_*(IC_X)\simeq \bigoplus_{\alpha}IC_{\overline{Y_{\alpha}}}(L_{\alpha})[-d_{\alpha}],$$
with each $Y_{\alpha}$ a smooth, irreducible, locally closed subset of $Y$ and $L_{\alpha}$ a local system on $Y_{\alpha}$. 
We may assume that each $L_{\alpha}$ is nonzero and indecomposable.

Since $Rf_*(IC_X)$ is $\Omega_Y$-constructible  by 
Lemma~\ref{Omega_constructibility}, so is each term
$IC_{\overline{Y_{\alpha}}}(L_{\alpha}).$ It follows that
$\overline{Y_{\alpha}}= V(\sigma_\alpha)$ for a unique cone $\s_\alpha \in \Delta_Y$ and that
we can take $Y_\alpha= O(\s_\alpha).$

The proof of the same lemma implies that the restriction of ${\mathcal H}^{-\dim {O(\s_\alpha)}}(IC_{\overline{V(\s_\alpha)}} (L_{\alpha}))$ to
$ O(\sigma_\alpha)$ is constant.  This  restriction is $L_\alpha,$ which is thus
 constant, hence isomorphic to $\rat_{O(\s_{\alpha})}$ (being indecomposable). Therefore $IC_{\overline{Y_{\alpha}}}(L_{\alpha})=IC_{V(\sigma_\alpha)}$.

The assertions in i) and ii) now follow from Poincar\'{e} duality and relative Hard Lefschetz (see \cite[Th\'eor\`eme 5.4.10]{BBD}). In order to check the assertion in iii),
we consider for $\tau\in \Delta_Y$ the stalk at $x_{\tau}$ for both sides of (\ref{eq_toric_dec_thm}). By taking the $i^{\rm th}$ cohomology
and applying base-change, we conclude that ${\mathcal H}^{i-b}(IC_{V(\tau)})_{x_{\tau}}^{\oplus s_{\tau,b}}$ is a direct summand of
$H^i(f^{-1}(x_{\tau}),IC_X)$. 
If we have $b+\dim(X)-\dim(V(\tau))$ odd, then Lemma~\ref{even_IC} implies that $H^i(f^{-1}(x_{\tau}),IC_X)=0$ when $i=b-\dim(V(\tau))$. However, in this case
${\mathcal H}^{i-b}(IC_{V(\tau)})_{x(\tau)}\simeq \rat$, which implies $s_{\tau,b}=0$.
\end{proof}

\begin{rmk}\label{rmk_KS1}
As we have mentioned in the Introduction, Katz and Stapledon associate some invariants, \emph{local h-polynomials} to certain maps between posets.
This is done in a purely combinatorial way. Given a toric fibration $f\colon X\to Y$, their framework can be applied to the map $f_*\colon\Delta_X\to\Delta_Y$
and it turns out that our invariants $s_{\tau,b}$ can be interpreted as coefficients of the local $h$-polynomial.
Due to the combinatorial definition, the nonnegativity of these coefficients is not a priori clear. However, one of the main results in \cite{KS} says that in the case
of a regular rational polyhedral subdivision of a rational polytope ${\rm (}$which corresponds to a projective toric birational morphism between  toric varieties${\rm )}$, the coefficients of the local $h$-polynomial are nonnegative,
symmetric, and unimodal ${\rm (}$see \cite[Theorem 6.1]{KS}${\rm )}$.
\end{rmk}

\section{Combinatorics of the toric Decomposition Theorem: the simplicial case}\la{sec_simpl}

In this section we study toric fibrations $f\colon X\to Y$, with $X$ and $Y$ simplicial toric varieties. In this case
we can determine explicitly all multiplicities $s_{\tau,b}$ that appear in Theorem~\ref{toric_dec_thm}.

We begin by recalling some elementary combinatorial definitions and facts about the incidence algebra associated with a partially ordered set. 
For an introduction to incidence algebras and applications, see \cite{Rota64} or \cite[\S 3.6]{Stanley4}. 
If $({\mathcal P},\leq)$ is a finite poset and $K$ is a commutative ring with identity, we have the \emph{incidence algebra} ${\mathbb I}({\mathcal P},K)$ 
consisting of the set of functions $$f\colon \{(a,b)\in {\mathcal P}\times {\mathcal P}\mid a\leq b\}\to K$$ with a \emph{convolution} operation defined by 
$$f\star g (a,b):=\sum_{a\leq c\leq b}f(a,c)\cdot g(c,b).$$
This is an associative operation with an identity element given by the \emph{ delta function} $\delta$\footnote{This should not be confused with our key invariant $\delta$, a function of a single variable.}, with $\delta(a,b)=1$ if $a=b$ and $\delta(a,b)=0$, otherwise. 
It is easy to check, see \cite[Proposition 3.6.2]{Stanley4}, that  $f$ has a (unique) inverse with respect to convolution if and only if $f(a,a)$ is a unit of $K$ for every $a\in {\mathcal P}$ (the proof in  \cite{Stanley4}, given for $K=\zed$, holds without any change for any commutative ring with identity). 

 An important example is that 
of the function $\zeta$ given by $\zeta(a,b)=1$ whenever $a\leq b$. The inverse of $\zeta$ with respect to convolution is the \emph{M\"{o}bius function} $\mu_{\mathcal P}$
of ${\mathcal P}$. For example, if $({\mathcal P},\leq)$ is the set of all subsets of a finite set, ordered by inclusion, then it is easy to see that
$\mu_{\mathcal P}(A,B)=(-1)^{\#(B\smallsetminus A)}$ whenever $A\subseteq B$.

Another fact that follows easily from definition is that if $f$ is a function as above with inverse $g$ with respect to convolution and
$\phi,\psi\colon {\mathcal P}\to K$ are functions such that
$\phi(x)=\sum_{y\leq x}f(y,x)\psi(y)$ for every $x\in {\mathcal P}$, 
then $\psi(y)=\sum_{x\leq y}g(x,y)\phi(x)$ for every $x\in {\mathcal P}$. When $f=\delta$, this is the \emph{M\"{o}bius inversion formula}.

After these preparations, we return to toric maps. In the following theorem we consider a toric fibration $f\colon X\to Y$, in which both $X$ and $Y$ are simplicial. In this case the decomposition
(\ref{eq_toric_dec_thm}) becomes easier to describe, since $IC_X={\mathbf Q}_X[\dim(X)]$ and $IC_{V(\tau)}=\rat_{V(\tau)}[\dim(V(\tau))]$ for every cone $\tau\in\Delta_Y$
(we use the fact that both $X$ and $V(\tau)$ are $\rat$-manifolds).
Recall that if $f\colon X\to Y$ is a toric fibration, then for every $\tau\in\Delta_Y$ we put
$$d_{\ell}(X/\tau)=\#\{\alpha\in\Delta_X\mid f_*(\alpha)=\tau, {\rm codim}(\alpha)-{\rm codim}(\tau)=\ell\}.$$

\begin{tm}\label{form_both_simplicial}
Suppose we are in the setting of Theorem~\ref{toric_dec_thm}, with both $X$ and $Y$  simplicial.
\begin{enumerate}
\item[i)] For every $\tau\in\Delta_Y$,  we have
 $$\delta_{\tau}:=\sum_bs_{\tau,b}=\sum_{\sigma\subseteq\tau}(-1)^{\dim(\tau)-\dim(\sigma)}d_0(X/\sigma).$$
\item[ii)] For every $m\in\zed$ and every $\tau\in\Delta_Y$, we have
$$s_{\tau,2m +\dim(V(\tau))-\dim(X)}=\sum_{\sigma\subseteq\tau}(-1)^{\dim(\tau)-\dim(\sigma)}\cdot \sum_{\ell\geq m}(-1)^{\ell-m}{{\ell}\choose m}d_{\ell}(X/\sigma)$$
${\rm (}$where the right-hand side is understood to be $0$ if $m<0$${\rm )}$, while $s_{\tau,i+\dim(V(\tau))-\dim(X)}=0$ if $i$ is odd.
\end{enumerate}
\end{tm}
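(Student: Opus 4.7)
The plan is to compare the Decomposition Theorem (Theorem~\ref{toric_dec_thm}) with the fiber cohomology computation of Corollary~\ref{Betti_fib}, and then perform a Möbius inversion over the poset $\Delta_Y$. The key simplification in the present setting is that $X$ and every $V(\tau)$ are rational homology manifolds, so in (\ref{eq_toric_dec_thm}) we may take $IC_X=\mathbf{Q}_X[\dim X]$ and $IC_{V(\tau)}=\mathbf{Q}_{V(\tau)}[\dim V(\tau)]$.

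First I would take the stalk of both sides of (\ref{eq_toric_dec_thm}) at the fixed point $x_\tau\in V(\tau)\subseteq Y$ and apply proper base change. Since $x_\tau\in V(\sigma)$ precisely when $\sigma\subseteq\tau$, only those summands survive. Computing cohomology sheaves and setting $i+\dim X=2m$ (the odd cohomology of the fiber vanishes by Theorem~\ref{pure_coh}) yields
\[
\dim_{\mathbf{Q}} H^{2m}\bigl(f^{-1}(x_\tau),\mathbf{Q}\bigr)\;=\;\sum_{\sigma\subseteq\tau}s_{\sigma,\,2m+\dim V(\sigma)-\dim X}.
\]
Combining with Corollary~\ref{Betti_fib} gives, for each fixed $m$, the identity $\phi_\tau(m)=\sum_{\sigma\subseteq\tau}\psi_\sigma(m)$ on $\Delta_Y$, where $\phi_\tau(m)=\sum_{\ell\geq m}(-1)^{\ell-m}\binom{\ell}{m}d_\ell(X/\tau)$ and $\psi_\sigma(m)=s_{\sigma,\,2m+\dim V(\sigma)-\dim X}$.

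Next I would invert this identity in the incidence algebra of $\Delta_Y$. The crucial combinatorial observation is that since $Y$ is simplicial, the faces of any cone $\tau\in\Delta_Y$ are in bijection with the subsets of the rays of $\tau$; consequently every interval $[\sigma,\tau]\subseteq\Delta_Y$ is a Boolean lattice, with Möbius function $\mu(\sigma,\tau)=(-1)^{\dim\tau-\dim\sigma}$. Applying Möbius inversion to the preceding identity then produces exactly the formula in part (ii) for even $i=2m$. For odd $i$ the vanishing is immediate from Theorem~\ref{toric_dec_thm}(iii), as $b=i+\dim V(\tau)-\dim X$ then has $b+\dim X-\dim V(\tau)=i$ odd.

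Part (i) will follow by summing the formula of (ii) over $m\in\mathbf{Z}$; after swapping the order of summation the inner sum becomes $\sum_{m=0}^{\ell}(-1)^{\ell-m}\binom{\ell}{m}=0^{\ell}$, which selects $\ell=0$ and collapses the expression to $\sum_{\sigma\subseteq\tau}(-1)^{\dim\tau-\dim\sigma}d_0(X/\sigma)$. The parity constraint in Theorem~\ref{toric_dec_thm}(iii) ensures that the total $\delta_\tau=\sum_b s_{\tau,b}$ is accounted for by the values of $m\in\mathbf{Z}$ only. The only genuinely non-formal point in the argument is recognizing the Boolean structure of $[\sigma,\tau]$ in the simplicial case, which is what lets Möbius inversion be carried out by hand; the substantive inputs are the Decomposition Theorem and the purity result (Theorem~\ref{pure_coh}) needed for Corollary~\ref{Betti_fib}.
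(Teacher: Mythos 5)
Your proposal is correct and follows essentially the same route as the paper: take stalks of the decomposition at the points $x_\tau$, use proper base change together with Theorem~\ref{pure_coh} and Corollary~\ref{Betti_fib}, and invert over the Boolean intervals of the simplicial fan via the M\"obius function $(-1)^{\dim\tau-\dim\sigma}$. The only cosmetic difference is that the paper obtains part (i) directly by M\"obius-inverting the Euler characteristic identity $\chi(f^{-1}(x_\sigma))=\sum_{\tau\subseteq\sigma}\delta_\tau$, whereas you derive it by summing part (ii) over $m$ and using $\sum_{m=0}^{\ell}(-1)^{\ell-m}\binom{\ell}{m}=0^{\ell}$; these are the same computation in a different order.
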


\begin{proof}
Let $d_X=\dim(X)$. 
Since $X$ and $Y$ are simplicial, it follows from Theorem~\ref{toric_dec_thm} that we have a decomposition
\begin{equation}\label{eq_form_both_simplicial}
Rf_*(\rat_X[d_X])\simeq\bigoplus_{\tau\in\Delta_Y}\bigoplus_{b\in\zed}\rat_{V(\tau)}^{\oplus s_{\tau,b}}[\dim(V(\tau))-b].
\end{equation}
If $\sigma\in\Delta_Y$, by taking the stalk at $x_{\sigma}$ and computing the $(i-d_X)^{\rm th}$ cohomology, we obtain
via base-change
\begin{equation}\label{eq3_form_both_simplicial}
H^{i}(f^{-1}(x_{\sigma}),\rat)\simeq \bigoplus_{\tau\subseteq\sigma}\rat^{\oplus s_{\tau,i+\dim(V(\tau))-d_X}}.
\end{equation}
The second assertion in ii) follows directly from Theorem~\ref{toric_dec_thm}, while
Theorem~\ref{pure_coh} implies
$H^j(f^{-1}(x_{\sigma}),\rat)=0$ for $j$ odd. We conclude that 
$$\chi(f^{-1}(x_{\sigma}))=\sum_{i\geq 0}\dim_{\rat}H^i(f^{-1}(y),\rat)=\sum_{\tau\subseteq\sigma}\delta_{\tau}.$$
The M\"{o}bius inversion formula for the poset $\Delta_Y$ implies
\begin{equation}\label{eq2_form_both_simplicial}
\delta_{\tau}=\sum_{\sigma\subseteq\tau}\mu_{\Delta_Y}(\sigma,\tau)\chi(f^{-1}(x_{\sigma}))=\sum_{\sigma\subseteq\tau}\mu_{\Delta_Y}(\sigma,\tau)d_0(X/\sigma),
\end{equation}
where the second equality follows from Proposition~\ref{HD_fiber}. 
On the other hand, $\mu_{\Delta_Y}(\sigma,\tau)$ only depends on the interval $[\sigma,\tau]$ in $\Delta_Y$ and since $\tau$ is simplicial, this interval 
is in order-preserving bijection 
with the poset of all subsets of a set with $\dim(\tau)-\dim(\sigma)$ elements. Therefore $\mu_{\Delta_Y}(\sigma,\tau)=(-1)^{\dim(\tau)-\dim(\sigma)}$. 
The formula in (\ref{eq2_form_both_simplicial}) thus gives the assertion in i).

We proceed similarly to prove ii). Let $m$ be a fixed integer.
It follows from 
(\ref{eq3_form_both_simplicial}) that for every $\sigma\in\Delta_Y$ we have
$$\dim_{\rat}H^{2m}(f^{-1}(x_{\sigma}),\rat)=\sum_{\tau\subseteq\sigma}s_{\tau,2m+\dim(V(\tau))-d_X}.$$
The M\"obius inversion formula and Corollary~\ref{Betti_fib} imply
$$s_{\tau,2m+\dim(V(\tau))-d_X}=\sum_{\sigma\subseteq\tau}(-1)^{\dim(\tau)-\dim(\sigma)}\dim_{\rat}H^{2m}(f^{-1}(x_{\sigma}),\rat)$$
$$=
\sum_{\sigma\subseteq\tau}(-1)^{\dim(\tau)-\dim(\sigma)}\cdot \sum_{\ell\geq m}(-1)^{\ell-m}{{\ell}\choose m}d_{\ell}(X/\sigma).$$
This completes the proof of the theorem.
\end{proof}

\begin{rmk}\label{rmk_KS2}
The reader can compare the formula for the invariants $s_{\tau,b}$ in Theorem~\ref{form_both_simplicial} with the formula in \cite[Lemma~4.12]{KS} for the local $h$-polynomial
of a map of posets $\Gamma\to B$, in which $\Gamma$ is simplicial and $B$ is a Boolean algebra. 
\end{rmk}

\begin{rmk}\la{iop}
Let $f\colon X\to Y$ be a toric fibration, with both $X$ and $Y$ simplicial. It follows from Theorem~\ref{form_both_simplicial}
that for every $\tau\in\Delta_Y$, the expression 
$$\sum_{\sigma\subseteq\tau}(-1)^{\dim(\tau)-\dim(\sigma)}d_0(X/\sigma)$$
is nonnegative ${\rm (}$and it is positive if and only if $V(\tau)$ is a support for $f$${\rm )}$. 
We do not know a direct combinatorial argument that would imply that the expression is nonnegative. 
A similar remark can be made in the not-necessarily-simplicial case, following 
the combination of Theorems \ref{form_general} and \ref{thm_p_sigma}. 
In Remark~\ref{rmk_special_case} below we give such an argument when $f$ is birational  between simplicial toric varieties   and $\dim(\tau)\leq 3$.  
\end{rmk}

\begin{rmk}\label{rmk_rel_f_vector}
Note that the invariants $s_{\tau,b}$ in Theorem~\ref{toric_dec_thm} satisfy the conditions ${\rm i)}$ and ${\rm ii)}$ coming from Poincar\'{e} duality
and relative Hard Lefschetz.  
In the setting of Theorem~\ref{form_both_simplicial}, these translate into interesting conditions satisfied by the invariants $d_{\ell}(X/\tau)$, for
the cones $\tau\in\Delta_Y$. More precisely, suppose that $f\colon X\to Y$ is a projective toric fibration between simplicial toric varieties. 
For a cone $\sigma\in\Delta_Y$ and $m\geq 0$, let us put
$$\widetilde{d}_m(X/\sigma)=\sum_{\ell\geq m}(-1)^{\ell-m}{\ell\choose m}d_{\ell}(X/\sigma).$$
Recall that by Corollary~\ref{Betti_fib}, we have $\widetilde{d}_m(X/\sigma)=\dim_{\rat}H^{2m}(f^{-1}(y);\rat)$ for any $y\in O(\sigma)$. In particular,
we have $\widetilde{d}_m(X/\sigma)\geq 0$.
With this notation, Poincar\'{e} duality says that for every $m$ with 
$0\leq m\leq \dim(X)-\dim(V(\tau))$, if $m'=\dim(X)-\dim(V(\tau))-m$, then
$$\sum_{\sigma\subseteq\tau}(-1)^{\dim(\tau)-\dim(\sigma)}\widetilde{d}_m(X/\sigma)=
\sum_{\sigma\subseteq\tau}(-1)^{\dim(\tau)-\dim(\sigma)}\widetilde{d}_{m'}(X/\sigma).$$
Similarly, relative Hard Lefschetz says that if $0\leq m\leq \frac{1}{2}(\dim(X)-\dim(V(\tau)))$, then
$$\sum_{\sigma\subseteq\tau}(-1)^{\dim(\tau)-\dim(\sigma)}\widetilde{d}_m(X/\sigma)\geq \sum_{\sigma\subseteq\tau}(-1)^{\dim(\tau)-\dim(\sigma)}\widetilde{d}_{m+1}(X/\sigma).$$
These conditions generalize to the relative setting the famous restrictions on the $f$-vector of a simplicial toric variety that come from Poincar\'{e} duality
and Hard Lefschetz ${\rm (}$see \cite[Chapter~5.6]{Ful93}${\rm )}$.
\end{rmk}

\begin{rmk}\label{rmk_special_case}
Suppose that $f\colon X\to Y$ is a proper, birational toric map. We may assume that $N_X=N_Y$ and $f_N$ is the identity, hence
$\Delta_X$ gives a fan refinement of $\Delta_Y$. If for a cone $\tau\in\Delta_Y$ we define $\delta_{\tau}$ by 
\begin{equation}\label{eq_rmk_special_case}
\delta_{\tau}=\sum_{\sigma\subseteq\tau}(-1)^{\dim(\tau)-\dim(\sigma)}d_0(X/\sigma),
\end{equation}
then we want to give a ``nonnegative expression" for $\delta_{\tau}$. For every cone $\tau\in\Delta_Y$,
let $\iota(\tau)$ denote the number of rays in $\Delta$ that are contained in the relative interior of $\tau$. 
If $\dim(\tau)\leq 3$, then we have the following formulas:
\begin{enumerate}
\item[i)] $\delta_0=1$ and $\delta_{\tau}=0$ if $\dim(\tau)=1$.
\item[ii)] $\delta_{\tau}=\iota(\tau)$ if $\dim(\tau)=2$.
\item[iii)] $\delta_{\tau}=2\iota(\tau)$ if $\dim(\tau)=3$. 
\end{enumerate}
The assertions in ${\rm i)}$ and ${\rm ii)}$ follow easily from (\ref{eq_rmk_special_case}), hence we only prove ${\rm iii)}$. 
In order to check this, it is convenient to consider a transversal section $T$ of $\tau$. This is a triangle such that
$\Delta_X$ induces a triangulation $\Lambda$ of $T$. Let us consider the following invariants:
\begin{enumerate}
\item[1)]  $a_3$ is the number of triangles in $\Lambda$,
\item[2)] $a_2$ is the number of segments in $\Lambda$ that are contained in the boundary of $T$,
\item[3)]  $a'_2$ is the number 
of segments in $\Lambda$ not contained in the boundary of $T$,
\item[4)]  $a_1$ is the number of points in $\Lambda$ in the boundary of $T$,
\item[5)] $a'_1$ is the number of points in the interior of $T$ ${\rm (}$hence $a'_1=\iota(\sigma)$${\rm )}$.
\end{enumerate}
Note that we have the following relations between these invariants:
\begin{enumerate}
\item[R1)] $(a_1+a'_1)-(a_2+a'_2)+a_3=1$ ${\rm (}$by considering the Euler-Poincar\'{e} characteristic of $T$${\rm )}$.
\item[R2)] $a_1=a_2$.
\item[R3)] $3a_3=a_2+2a'_2$ ${\rm (}$by counting the segments in the boundaries of all triangles, and noting that a segment appears in
2 triangles if it is not contained in the boundary of $T$, and in 1, otherwise${\rm )}$.
\end{enumerate}
By combining ${\rm R1)}$ and ${\rm R3)}$, we see that
$$3a_2+3a'_2-3a_1-3a'_1+3=a_2+2a'_2.$$
Simplifying and using also ${\rm R2)}$, we obtain:
\begin{equation}\label{eq3_rmk_special_case}
a'_2-a_1-3a'_1+3=0.
\end{equation}
On the other hand, it follows from ${\rm (}$\ref{eq_rmk_special_case}${\rm )}$ that
$$\delta_{\tau}=a_3-a_2+2.$$
By using ${\rm R1)}$ and ${\rm (}$\ref{eq3_rmk_special_case}${\rm )}$, we obtain the desired conclusion:
$$\delta_{\tau}=a'_2-(a_1+a'_1)+3=3a'_1-a'_1=2a'_1=2\iota(\sigma).$$
 It is worth noting that if $\dim(\tau)=4$, then $\delta_{\tau}$ is not a multiple of $\iota(\tau)$.
 Indeed, by considering the blow-up of ${\mathbf A}^{\!4}$ at the origin, we see that the only possibility would be
 $\delta_{\tau}=3\iota(\tau)$. On the other hand, consider $f=g\circ h$, where $g\colon Z\to {\mathbf A}^{\!4}$ is the blow-up of an invariant line $L$, 
 with exceptional divisor $E\simeq  {\mathbf P}^2\times L$ and $h$ is the blow-up of $Z$ along the subset ${\mathbf P}^2\times\{0\}\subset E$.
 An easy computation shows that in this case $\iota(\tau)=1$ but $\delta(\tau)=4$. 
\end{rmk}

\section{Combinatorics of the toric Decomposition Theorem: the general case}\la{sec_gen}

Our  goal in this section is to determine the supports of an arbitrary toric fibration $f\colon X\to Y$
and to show that they are combinatorially determined. 
In this case there are two difficulties, compared with the setting in the previous section: on one hand, the poset structure of $\Delta_Y$ is more complicated; second,
and more crucially, we need to take into account the 
singularities of $X$ and $Y$. These will come up through the local behavior of the intersection cohomology complexes.
In order to deal with the latter issue we begin by introducing the following invariant of an arbitrary toric variety.

Let $\zed [T,T^{-1}]$ denote the ring of Laurent polynomials with integer coefficients.
Given a toric variety $Y$ and two cones $\tau\subseteq\sigma$ in $\Delta_Y$, we define
$$R_{\tau, \sigma}(T)= \sum_{k \in \zed}  \dim_{\rat}{\mathcal H}^{k}(IC_{V(\tau)})_{x_{\sigma}}T^k \in \zed [T,T^{-1}]
$$
and 
$$   r_{\tau,\sigma}=R_{\tau, \sigma}(1)= \sum_{k\in\zed} \dim_{\rat}{\mathcal H}^{k}(IC_{V(\tau)})_{x_{\sigma}}.$$

Note that since the restriction of ${\mathcal H}^k(IC_{V(\tau)})$ to each torus-orbit is constant by Lemma~\ref{Omega_constructibility}, we could have replaced in the above definition
$x_{\sigma}$ by any other point in $O(\sigma)$.

 \begin{rmk}\label{rmk_combinatorial_invariant}
The function $R\colon \{(\tau,\sigma)\in\Delta_Y\times\Delta_Y\mid \tau\subseteq\sigma\}\to \zed[T,T^{-1}]$ only depends on the combinatorics of $\Delta_Y$.
Indeed, in order to see that $R_{\tau,\sigma}$ is combinatorially determined, we may replace $Y$ by $V(\tau)$ and thus assume that $\tau=\{0\}$.
In this case, the assertion is a consequence of \cite[Theorems 1.1, 1.2]{Fieseler} and \cite[Theorem 6.2]{DL}. 
We also note 
 that $R_{\tau,\sigma}(T)=T^{\dim (\tau )-n}$ whenever $\dim(\sigma)-\dim(\tau)\leq 2$, where $n=\dim(Y)$. 
Indeed, in this case $V:=V(\tau)\cap U_{\sigma}$ is a simplicial toric variety, hence $IC_V=\rat_V[\dim(V)]$. In particular, since
$R_{\tau, \tau}(T) =T^{\dim (\tau)-n}$ is invertible, it follows that the function $R$ has an inverse 
$$\widetilde{R} \colon \{(\tau,\sigma)\in\Delta_Y\times\Delta_Y\mid \tau\subseteq\sigma\} \to \zed [T,T^{-1}]$$ 
with respect to the convolution 
on the incidence algebra corresponding to the poset $\Delta_Y$. We set $\widetilde{r}_{\tau,\sigma}=\widetilde{R}_{\tau,\sigma}(1).$
\end{rmk}

The function $\widetilde{R}$ will feature in the description of the supports of a toric fibration.
 
 \begin{rmk}\label{rmk_combinatorics}
It follows from \cite[Proposition 8.1]{Stanley3} that, up to signs and powers of $T$, the function $\widetilde{R}$ is just 
the function $R$ associated with the dual poset. We are grateful to T. Braden for pointing this out to us. \end{rmk}

 Given a toric fibration $f\colon X \to Y$, we define the functions
 $$
 P_f\colon \Delta_Y \to   \zed [T,T^{-1}] \quad\text{and}\quad p_f\colon \Delta_Y \to \zed
 $$
 by
 $$P_{f, \sigma}(T):=\sum_{k\in\zed} \dim_{\rat}H^k(f^{-1}(x_{\sigma}), IC_X)T^k   \mbox{ and } p_{f, \sigma }:=P_{f, \sigma}(1)=\dim_{\rat}H^*(f^{-1}(x_{\sigma}), IC_X).$$
 It is not a priori clear that $ P_{f, \sigma}(T)$   and $p_{f,\sigma}$ are combinatorially determined, but this 
 follows from Theorem~\ref{thm_p_sigma} below. 
 Finally, we define 
 $$S\colon \Delta_Y \to   \zed [T,T^{-1}] \quad\text{as}\quad
 S_{\tau}(T)=\sum_{b\in\zed} s_{\tau,b}T^b,$$
 where the $s_{\tau,b}$ are the multiplicities defined in  Theorem~\ref{toric_dec_thm}.
 It follows from  i) in Theorem~\ref{toric_dec_thm} that $S_{\tau}(T)=S_{\tau}(T^{-1})$.
 Using the invariants $P_{f, \sigma}$ and $\widetilde{R}_{\tau,\sigma}$, we can now describe the supports 
 of any toric fibration.

\begin{tm}\label{form_general}
Suppose that we are in the setting of Theorem~\ref{toric_dec_thm}.
With the above definitions, for every $\tau\in\Delta_Y$, we have
$$S_\tau(T)= \sum_{\sigma \subseteq \tau }\widetilde{R}_{\sigma, \tau}(T)P_{f,\sigma} (T).$$
\end{tm}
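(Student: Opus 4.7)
The plan is to apply the Decomposition Theorem (Theorem~\ref{toric_dec_thm}) stalk-wise and then invert the resulting linear relation via the Möbius formalism in the incidence algebra of $\Delta_Y$ valued in $\zed[T, T^{-1}]$. Concretely, starting from
$$Rf_*(IC_X) \simeq \bigoplus_{\tau \in \Delta_Y} \bigoplus_{b \in \zed} IC_{V(\tau)}^{\oplus s_{\tau,b}}[-b],$$
I would take stalks at the distinguished point $x_\sigma$ for each $\sigma \in \Delta_Y$, apply proper base change to identify $\mathcal{H}^k(Rf_*(IC_X))_{x_\sigma}$ with $H^k(f^{-1}(x_\sigma), IC_X)$, and then extract the polynomial identity that encodes the decomposition.

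The key observation is that $x_\sigma \in V(\tau)$ if and only if $\tau \subseteq \sigma$, so only the summands indexed by $\tau \subseteq \sigma$ contribute to the stalk. Expanding degrees and packaging everything into Laurent polynomials, this yields, for each $\sigma \in \Delta_Y$,
$$P_{f,\sigma}(T) \;=\; \sum_{\tau \subseteq \sigma} R_{\tau,\sigma}(T)\, S_\tau(T)$$
as an identity in $\zed[T, T^{-1}]$. This is precisely the statement that, viewed as functions of one poset variable, $P_f$ is obtained from $S$ by convolution with $R$ in the incidence algebra $\mathbb{I}(\Delta_Y, \zed[T, T^{-1}])$.

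Next I would invoke the inversion formalism recalled in Section~\ref{sec_simpl}: since the diagonal value $R_{\tau,\tau}(T) = T^{\dim(\tau) - \dim(Y)}$ is a unit in $\zed[T, T^{-1}]$ (this follows from the fact that $x_\tau$ lies in the open torus orbit of $V(\tau)$, where $IC_{V(\tau)}$ reduces to a shift of the constant sheaf), the function $R$ admits a convolution inverse $\widetilde R$, as noted in Remark~\ref{rmk_combinatorial_invariant}. The inversion lemma for the incidence algebra (with $\phi = P_f$, $\psi = S$, $f = R$, $g = \widetilde R$) then gives immediately
$$S_\tau(T) \;=\; \sum_{\sigma \subseteq \tau} \widetilde R_{\sigma,\tau}(T)\, P_{f,\sigma}(T),$$
which is the desired formula.

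The main obstacle, such as it is, is checking that the decomposition really produces the clean convolution identity above; once this is in hand, Möbius inversion does the rest essentially formally. The check amounts to combining three inputs in the right order: the direct sum decomposition of Theorem~\ref{toric_dec_thm}, proper base change at $x_\sigma$, and the vanishing of $\mathcal{H}^\bullet(IC_{V(\tau)})$ away from $V(\tau)$. No further input from the geometry of $f$ is required at this stage — the combinatorial data of $\Delta_Y$ and the local intersection-cohomology numbers $R_{\tau,\sigma}$ together with the "fiber polynomials" $P_{f,\sigma}$ completely determine the $S_\tau$.
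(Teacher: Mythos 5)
Your proposal is correct and follows essentially the same route as the paper: take stalks of the decomposition at $x_\sigma$, use proper base change and the support condition $\tau\subseteq\sigma$ to obtain the convolution identity $P_{f,\sigma}(T)=\sum_{\tau\subseteq\sigma}R_{\tau,\sigma}(T)S_\tau(T)$, and then invert with $\widetilde R$ in the incidence algebra $\mathbb{I}(\Delta_Y,\zed[T,T^{-1}])$. No substantive differences from the paper's argument.
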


\begin{proof}
We proceed as in the proof of Proposition~\ref{form_both_simplicial}. Consider the decomposition given by Theorem~\ref{toric_dec_thm}:
$$Rf_*(IC_X)\simeq\bigoplus_{\tau\in\Delta_Y}\bigoplus_{b\in\zed}IC_{V(\tau)}^{\oplus s_{\tau,b}}[-b].$$
Let $\sigma\in\Delta_Y$.
By taking the stalk at $x_{\sigma}$ and computing the $i^{\rm th}$ cohomology, we obtain
$$\dim_{\rat}H^i(f^{-1}(x_{\sigma}),IC_X)=\sum_{\tau\subseteq\sigma}\sum_{b\in\zed}s_{\tau,b}\cdot\dim_{\rat}{\mathcal H}^{i-b}(IC_{V(\tau)})_{x_{\sigma}},$$
which gives the equality  
$$
P_{f, \sigma}(T)=\sum_{\tau\subseteq\sigma} R_{\tau,\sigma}(T)S_{\tau}(T)
$$
in $\zed[T,T^{-1}]$.
Since $\widetilde{R}$ is the inverse of $R$ with respect to convolution, we conclude
$$
S_{\tau}(T)=\sum_{\sigma\subseteq\tau}\widetilde{R}_{\sigma,\tau}(T)\cdot P_{f, \sigma}(T).
$$
This completes the proof.
\end{proof}

Evaluating at  $T=1$ we find the following useful criterion for a stratum to be a support of the map $f$.
\begin{cor}\label{useful-formula}
In the setting of Theorem~\ref{toric_dec_thm}, we have
$$\delta_{\tau}=\sum_{\sigma\subseteq\tau}\widetilde{r}_{\sigma,\tau}\cdot p_{f, \sigma}.$$
\end{cor}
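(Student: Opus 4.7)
The plan is to deduce this directly from Theorem~\ref{form_general} by evaluating the polynomial identity at $T=1$. Theorem~\ref{form_general} gives the equality
$$S_\tau(T)=\sum_{\sigma\subseteq\tau}\widetilde{R}_{\sigma,\tau}(T)\,P_{f,\sigma}(T)$$
in $\zed[T,T^{-1}]$, so the first step is simply to apply the ring homomorphism $\zed[T,T^{-1}]\to\zed$ sending $T\mapsto 1$ to both sides.

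The second step is to identify the resulting specializations with the quantities in the statement. By the definitions introduced just before Theorem~\ref{form_general}, we have $S_\tau(1)=\sum_b s_{\tau,b}=\delta_\tau$ and $P_{f,\sigma}(1)=p_{f,\sigma}$. For the factor $\widetilde{R}_{\sigma,\tau}(1)=\widetilde{r}_{\sigma,\tau}$ one needs to know that the evaluation $T\mapsto 1$ is compatible with the convolution structure of the incidence algebra of $\Delta_Y$; but evaluation at $T=1$ is a ring homomorphism and convolution is defined entrywise in terms of the coefficient ring, so it intertwines the convolution on $\mathbb{I}(\Delta_Y,\zed[T,T^{-1}])$ with that on $\mathbb{I}(\Delta_Y,\zed)$. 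In particular, the defining relation $R\star\widetilde{R}=\delta$ specializes to $r\star\widetilde{r}=\delta$, ensuring that the specialization of $\widetilde{R}_{\sigma,\tau}$ is the invariant $\widetilde{r}_{\sigma,\tau}$ defined in the introduction.

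Putting the two steps together yields the asserted identity. There is no real obstacle here: the substance of the argument is already in Theorem~\ref{form_general}, and this corollary amounts to extracting the total dimension (i.e., the value at $T=1$) from a cohomologically graded identity.
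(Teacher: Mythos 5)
Your proof is correct and matches the paper's own argument, which is precisely to evaluate the polynomial identity of Theorem~\ref{form_general} at $T=1$. The extra observation that specialization at $T=1$ intertwines the convolution structures, so that $\widetilde{R}_{\sigma,\tau}(1)$ agrees with the $\widetilde{r}_{\sigma,\tau}$ defined by convolution-inverting $r$, is a worthwhile consistency check (the paper uses both descriptions of $\widetilde{r}$, in the Introduction and in Remark~\ref{rmk_combinatorial_invariant}), but it does not change the route.
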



\begin{cor}\label{form_general_simpl_source}
If we are in the setting of Theorem~\ref{toric_dec_thm} and $X$ is simplicial with $\dim(X)=d_X$, then for every $\tau\in\Delta_Y$, we have
$$
S_{\tau}(T)=\sum_{\sigma\subseteq\tau}\widetilde{R}_{\sigma,\tau}(T)\cdot \left( \sum_{m\geq 0} \left(\sum_{\ell\geq m}(-1)^{\ell-m}{{\ell}\choose m}d_{\ell}(X/\sigma)\right)T^{2m-d_X}\right)
$$
and 
$$
\delta_{\tau}=\sum_{\sigma\subseteq\tau}\widetilde{r}_{\sigma,\tau}\cdot d_0(X/\sigma).
$$
\end{cor}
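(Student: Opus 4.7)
The plan is to derive both formulas by plugging the simplicial hypothesis into Theorem~\ref{form_general}, which already gives $S_\tau(T)=\sum_{\sigma\subseteq\tau}\widetilde{R}_{\sigma,\tau}(T)P_{f,\sigma}(T)$. The entire content of the corollary is therefore an explicit computation of $P_{f,\sigma}(T)$ when $X$ is simplicial, followed by the specialization $T=1$.

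First, since $X$ is simplicial (hence a $\rat$-manifold), we have $IC_X\simeq \rat_X[d_X]$. Consequently, for any $\sigma\in\Delta_Y$,
$$H^k\bigl(f^{-1}(x_\sigma),IC_X\bigr)\simeq H^{k+d_X}\bigl(f^{-1}(x_\sigma),\rat\bigr).$$
By Theorem~\ref{pure_coh}, the cohomology $H^*(f^{-1}(x_\sigma),\rat)$ is concentrated in even degrees, and Corollary~\ref{Betti_fib} identifies
$$\dim_\rat H^{2m}\bigl(f^{-1}(x_\sigma),\rat\bigr)=\sum_{\ell\geq m}(-1)^{\ell-m}\binom{\ell}{m}d_\ell(X/\sigma).$$
Substituting into the definition of $P_{f,\sigma}(T)$, only the terms with $k+d_X=2m$ for some $m\geq 0$ contribute, giving
$$P_{f,\sigma}(T)=\sum_{m\geq 0}\left(\sum_{\ell\geq m}(-1)^{\ell-m}\binom{\ell}{m}d_\ell(X/\sigma)\right)T^{2m-d_X}.$$
Plugging this expression into the formula from Theorem~\ref{form_general} yields the first claim.

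For the second formula, I would specialize $T=1$ via Corollary~\ref{useful-formula} (which is itself the evaluation of Theorem~\ref{form_general} at $T=1$), reducing everything to computing $p_{f,\sigma}=P_{f,\sigma}(1)$. By the previous paragraph, $p_{f,\sigma}$ equals the total dimension $\sum_m\dim_\rat H^{2m}(f^{-1}(x_\sigma),\rat)$, and since the odd cohomology vanishes this sum is just the Euler characteristic $\chi(f^{-1}(x_\sigma))$. Proposition~\ref{HD_fiber} identifies this Euler characteristic with $d_0(X/\sigma)$, which gives
$$\delta_\tau=\sum_{\sigma\subseteq\tau}\widetilde{r}_{\sigma,\tau}\cdot d_0(X/\sigma),$$
as desired. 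There is no real obstacle here: the whole argument is a bookkeeping step that packages Theorem~\ref{form_general}, the purity of Theorem~\ref{pure_coh}, and the Betti/Hodge–Deligne computations of Corollary~\ref{Betti_fib} and Proposition~\ref{HD_fiber} into the stated closed form. The only point that deserves a moment of care is the normalization of the shift by $d_X$ in the exponent of $T$, arising from $IC_X=\rat_X[d_X]$.
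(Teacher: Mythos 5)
Your proof is correct and follows essentially the same route as the paper: substitute $IC_X=\rat_X[d_X]$ into Theorem~\ref{form_general}, compute $P_{f,\sigma}(T)$ via Theorem~\ref{pure_coh} and Corollary~\ref{Betti_fib}, then specialize at $T=1$ using the Euler characteristic identity from Proposition~\ref{HD_fiber}. The paper merely compresses these same steps into two lines.
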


\begin{proof}
Since $X$ is simplicial, we have $IC_X=\rat_X[d_X]$. The first equality follows immediately from Theorem \ref{form_general} and Corollary \ref{Betti_fib}. The second equality is then obtained evaluating at $T=1$:
$$p_{f, \sigma}=\sum_{i\in\zed}\dim_{\rat}H^{i+d_X}(f^{-1}(x_{\sigma}),\rat)=\chi(f^{-1}(x_{\sigma}))=d_0(X/\sigma).$$
\end{proof}

By Remark~\ref{rmk_combinatorial_invariant}, in order to show that the formulas for the invariants $S_\tau$ and $\delta_{\tau}$ in Theorem~\ref{form_general} and Corollary~\ref{useful-formula}
 only depend on combinatorics,
it is enough to show that the invariants $P_{f, \sigma}$ only depend on combinatorics. This is implied by the following theorem. When $Y$ is a point and $X$ is projective,
this is a consequence of the results in \cite{Fieseler}.

\begin{tm}\label{thm_p_sigma}
If $f\colon X\to Y$ is a toric fibration, then for every $\sigma\in\Delta_Y$, we have
\begin{equation}\label{eq_thm_p_sigma}
P_{f, \sigma }(T)=\sum_{\tau} R_{0, \tau}(T)(T^2-1)^{{\rm codim}(\tau)-{\rm codim}(\sigma)} \quad
\text{and}\quad 
p_{f, \sigma}=\sum_{\tau}r_{0,\tau},
\end{equation}
where in the first formula the sum is over all the cones  $\tau$ in $\Delta_X$ with $f_*(\tau)=\sigma$, while in the second formula the $\tau$ are only those which, in addition, satisfy ${\rm codim}(\tau)={\rm codim}(\sigma)$. 
\end{tm}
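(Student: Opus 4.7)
The plan is to prove the first formula by stratifying $f^{-1}(x_\sigma)$ into torus orbits, invoking additivity of a suitable $IC_X$-valued Hodge-Deligne polynomial, and then extracting the Poincar\'e polynomial $P_{f,\sigma}(T)$ via purity and parity of stalks of $IC_X$. The second formula is immediate from the first on setting $T=1$: the factor $(T^2-1)^{d_\tau}$, where $d_\tau={\rm codim}(\tau)-{\rm codim}(\sigma)$, then vanishes unless $d_\tau=0$.

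By Proposition~\ref{str_fibers}(ii), I have a disjoint decomposition $f^{-1}(x_\sigma)=\bigsqcup_\tau Z_\tau$ indexed by $\{\tau\in\Delta_X\mid f_*(\tau)=\sigma\}$, with $Z_\tau\cong O(\tau/\sigma)\cong({\mathbf C}^*)^{d_\tau}$. Following Remark~\ref{HodgeDeligneforsheaves} and the mixed Hodge module formalism invoked in Remark~\ref{mhs_Saito}, I would attach to each union of orbits $W\subseteq f^{-1}(x_\sigma)$ an $IC_X$-valued Hodge-Deligne polynomial $E^{IC}(W;u,v)$ computed from the weight-graded pieces of $H^*_c(W, IC_X|_W)$. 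Strictness of morphisms of mixed Hodge structures in the long exact sequence of a locally closed pair then yields the additivity
\begin{equation*}
E^{IC}(f^{-1}(x_\sigma);u,v)=\sum_\tau E^{IC}(Z_\tau;u,v).
\end{equation*}

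To compute each $E^{IC}(Z_\tau;u,v)$, I would apply Lemma~\ref{lm_prod_str} to ${\rm Id}_X$ and $\tau\in\Delta_X$: this identifies an open neighborhood $U_\tau\cong U_{\tau'}\times O(\tau)$ so that $IC_X|_{U_\tau}$ is the external product of $IC_{X'}$ with the shifted constant sheaf on $O(\tau)$. Consequently $IC_X|_{O(\tau)}$ is (non-canonically) a direct sum of shifted constant sheaves whose fiber is $\mathcal{H}^*(IC_X)_{x_\tau}$, and a K\"unneth argument produces an isomorphism of mixed Hodge structures
\begin{equation*}
H^*_c(Z_\tau, IC_X|_{Z_\tau})\cong\mathcal{H}^*(IC_X)_{x_\tau}\otimes H^*_c(Z_\tau,{\mathbf Q}).
\end{equation*}
By Remark~\ref{mhs_Saito} applied to ${\rm Id}_X$ and $y=x_\tau$, the stalk $\mathcal{H}^k(IC_X)_{x_\tau}$ is pure of weight $k+\dim X$, and by Proposition~\ref{even_IC} it vanishes unless $k+\dim X$ is even. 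Together with $E(({\mathbf C}^*)^{d_\tau};T,T)=(T^2-1)^{d_\tau}$, the signs $(-1)^k$ in the alternating sum all collapse to $(-1)^{\dim X}$, giving $E^{IC}(Z_\tau;T,T)=(-T)^{\dim X}(T^2-1)^{d_\tau}R_{0,\tau}(T)$.

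The same purity/parity applied to $H^*(f^{-1}(x_\sigma),IC_X)$ itself gives $E^{IC}(f^{-1}(x_\sigma);T,T)=(-T)^{\dim X}P_{f,\sigma}(T)$, so summing over $\tau$ and dividing by $(-T)^{\dim X}$ yields the first formula. The main obstacle will be justifying the K\"unneth isomorphism at the level of mixed Hodge structures together with the additivity of the $IC_X$-valued Hodge-Deligne polynomial on stratifications: both depend on Saito's theory of mixed Hodge modules and the torus-equivariance of $IC_X$, while Lemma~\ref{lm_prod_str} provides the essential geometric reduction to a product situation on which the K\"unneth splitting is manifest.
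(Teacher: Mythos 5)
Your proposal is correct and follows essentially the same route as the paper: stratify $f^{-1}(x_\sigma)$ into the orbits $O(\tau)$ with $f_*(\tau)=\sigma$, use additivity of the $IC_X$-valued Hodge--Deligne polynomial over the strata, and then use purity and parity of both the stalks and of $H^*(f^{-1}(x_\sigma),IC_X)$ to convert $E(T,T)$ into $P_{f,\sigma}(T)$, the case $T=1$ following at once.

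The one step you single out as the main obstacle --- a K\"unneth isomorphism compatible with mixed Hodge structures, obtained from a non-canonical splitting of $IC_X\vert_{O(\tau)}$ into shifted constant sheaves --- is precisely where the paper takes a more robust path. Instead of splitting the complex, it runs the compactly supported hypercohomology spectral sequence
$E_2^{p,q}=H^p_c\bigl(O(\tau),{\mathcal H}^{q}(IC_{X})_{x_\tau}\bigr)\Rightarrow H^{p+q}_c(O(\tau),IC_X)$,
where the coefficients are constant by Lemma~\ref{Omega_constructibility}; since $E_2^{p,q}$ is pure of weight $2(p-t)+q+n$ and $d_r$ shifts this weight by $r+1\neq 0$, the differentials vanish (and in any case the $E$-polynomial is additive along a spectral sequence of mixed Hodge structures, so no splitting respecting Hodge structures is ever required). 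Your appeal to Remark~\ref{mhs_Saito} applied to ${\rm Id}_X$ for the purity of the stalks, and for the purity of $H^*(f^{-1}(x_\sigma),IC_X)$, is fine; the paper instead reduces via Lemma~\ref{retraction_lemma} to $IH^*(X)$ and Remark~\ref{mhs_intcoh}, which is only a cosmetic difference.
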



\begin{proof}
The second statement follows from the first by evaluating it at $T=1$, so it is enough to prove the first statement.

First we prove that $H^i(f^{-1}(x_{\sigma}),IC_X) $ is pure, so that it is enough to determine its Hodge-Deligne polynomial.
After replacing $Y$ by $U_{\sigma}$, we may assume that $Y=U_{\sigma}$. 
Moreover, it is easy to see using Lemma~\ref{lm_prod_str} that we may assume 
that $x_\sigma$  is a fixed point. In this case, it follows from Lemma~\ref{retraction_lemma} (see also Remark~\ref{rmk2_retraction_lemma}) that
$$H^i(f^{-1}(x_{\sigma}),IC_X)  =H^k(X,IC_X),$$ 
therefore, as discussed in Remark~\ref{mhs_intcoh},  $H^i(f^{-1}(x_{\sigma}),IC_X) $ is pure.

We proceed as in the proof of Corollary~\ref{Betti_fib}. Set $n= \dim X$.
By Lemma~\ref{Omega_constructibility}, the restriction of the complex $IC_X$  to every torus orbit $O(\tau)$ in $f^{-1}(y)$ 
is a complex with {\em constant cohomology sheaves} ${\mathcal H}^{k}(IC_{X})_{x_\tau}$, underlying a pure Hodge-Tate structure of weight $k+n$.

Set  $t=\dim O(\tau)$.
Since $H^p_c(O(\tau))\cong \rat(p-t)^{\oplus { t\choose{p}}}$, the compact cohomology group 
$$
H^p_c(O(\tau), {\mathcal H}^{q}(IC_{X})_{x_\tau} )
$$
has a Hodge structure of Hodge-Tate type and weight $2(p-t)+q+n$
As it is well-known, see \ci[p. 571]{dM} or \ci[Example~5.2(1)]{CLMS}, the differentials in the hypercohomology spectral sequence
$$
E_2^{p \, q}=H^p_c(O(\tau), {\mathcal H}^{q}(IC_{X})_{x_\tau} ) \Rightarrow H^{p+q}_c(O(\tau), IC_{X} )
$$
are compatible with the Hodge structure , and they are therefore forced to vanish.
It follows that the Hodge-Deligne polynomial of $H^{p+q}_c(O(\tau), IC_{X} )$ is  $R_{0, \tau}(T)(uv-1)^t$. 
Adding over all torus orbits contained in $f^{-1}(x_{\sigma})$ and setting $u=v=T$, we obtain the statement.

\end{proof}

\begin{rmk}\label{rmk_KS3}
Due to the combinatorial nature of the definition of the local $h$-polynomial in \cite{KS}, the proofs of the analogues of Theorems~\ref{form_general} and \ref{thm_p_sigma}
in that setting are more elementary. One can then use the results of this section to write our invariants $s_{\tau,b}$ as coefficients of local $h$-polynomials. 
\end{rmk}

\providecommand{\bysame}{\leavevmode \hbox \o3em
{\hrulefill}\thinspace}

\end{document}